\title{Small Seifert fibered surgery on hyperbolic pretzel knots}
\author{Jeffrey Meier} 
\address{Department of Mathematics\\University of Texas at Austin\\\newline
         Austin, TX 78712\\USA}
\email{jmeier@math.utexas.edu}
\urladdr{http://www.ma.utexas.edu/users/jmeier/}
\theoremstyle{definition}
\newtheorem{theorem}{Theorem}[section]
\newtheorem{proposition}[theorem]{Proposition}
\newtheorem{lemma}[theorem]{Lemma}
\newtheorem{definition}[theorem]{Definition}
\newtheorem{question}[theorem]{Question}
\newtheorem{criterion}[theorem]{Criterion}
\newtheorem{corollary}[theorem]{Corollary}
\newtheorem{conjecture}[theorem]{Conjecture}
\newtheorem*{acknowledgements}{Acknowledgements}
\newtheorem*{organization}{Organization}
\newtheorem*{rep@theorem}{\rep@title}
\newcommand{\newreptheorem}[2]{%
\newenvironment{rep#1}[1]{%
 \def\rep@title{#2 \ref{##1}}%
 \begin{rep@theorem}}%
 {\end{rep@theorem}}}
\numberwithin{figure}{section}
\newcommand{\Kh}{\text{Kh}}
\begin{document}

\begin{abstract}

We complete the classification of hyperbolic pretzel knots admitting Seifert fibered surgeries.  This is the final step in understanding all exceptional surgeries on hyperbolic pretzel knots.  We also present results toward similar classifications for non-pretzel Montesinos knots of length three.

\end{abstract}

\maketitle


\setcounter{section}{0}


\section{Introduction}

The study of exceptional surgery on hyperbolic knots has been well developed over the last quarter century.  One particularly well studied problem is that of exceptional surgery on arborescent knots, which include Montesinos knots and pretzel knots.  Thanks to the positive solution to the Geometrization conjecture \cite{perelman2, perelman3, perelman1}, any exceptional surgery is either reducible, toroidal, or a small Seifert fibered space.  Exceptional surgeries on hyperbolic arborescent knots of length 4 or greater have been classified \cite{wu_large}, as have exceptional surgeries on hyperbolic 2-bridge knots \cite{britt-wu:2bridge}.   It has been shown that no hyperbolic arborescent knot admits a reducible surgery \cite{wu_arbor}, and toroidal surgeries on hyperbolic arborescent knots of length three are completely classified \cite{wu_toroidal}.

Therefore, it only remains to understand small Seifert fibered surgeries on Montesinos knots of length three.  Furthermore, finite surgeries on Montesinos knots only occur in two instances, along two slopes of each of the pretzel knots $P(-2,3,7)$ and $P(-2,3,9)$  \cite{futer-etal_finite,ichi-jong_finite}.  Thus,  one must only consider non-finite, atoroidal Seifert fibered surgeries on hyperbolic Montesinos knots of length three.

According to Wu \cite{wu_plbs,wu_imm}, the only hyperbolic Montesinos knots of length three that are pretzel knots and might admit Seifert fibered surgeries have the form $P(q_1,q_2,q_3)$ or $P(q_1,q_2,q_3,-1)$, where $(|q_1|, |q_2|, |q_3|) = (2, |q_2|, |q_3|)$, $(3,3,|q_3|)$, or  $(3, 4, 5)$, and in the length four case, then $q_i>0$ for $i=1,2,3$.  Recently, it was shown that hyperbolic pretzel knots of the form $P(p,q,q)$ with $p,q$ positive \cite{ichi-jong_pqq} or $P(-2,p,p)$ with $p$ positive \cite{ichi-jong-kabaya_2pp} do not admit Seifert fibered surgeries.

Further work by Wu \cite{wu_plbs,wu_large,wu_imm} tells us that if a non-pretzel Montesinos knot admits a small Seifert fibered surgery, then it has one of the following forms:  $K[1/3, -2/3, 2/5]$, $K[-1/2, 1/3, 2/(2a+1)]$ for $a\in\{3,4,5,6\}$, or $K[-1/2, 1/(2q+1), 2/5]$ for $q\geq 1$.

In this paper, we address the issue of which of the above listed Montesinos knots admit small Seifert fibered surgeries.  The main results are stated below.  Keep in mind that there is an orientation reversing homeomorphism $K(\alpha) = \overline{K}(-\alpha)$, where $\overline{K}$ is the mirror of $K$.  Thus, we often consider in our analysis, and present in our results, only one representative of $\{K, \overline K\}$.

For the following theorem, recall that the pretzel knot $P(p,q,r)$ with $|p|,|q|,|r|\geq2$ is hyperbolic unless it is either $P(-2,3,3)$ or $P(-2,3,5)$, in which case it is the torus knot $T(3,4)$ or $T(3,5)$, respectively (\cite{oertel}).  Below, when we consider the knots $P(-2,2p+1,2q+1)$, we will assume that $|p|<|q|$ when $p$ and $q$ have the same sign and that $p>0$ when their signs differ.

\begin{theorem}\label{thm:main1}
The hyperbolic pretzel knot $P(-2, 2p+1, 2q+1)$, with the conventions discussed above, admits a small Seifert fibered surgery if and only if $p=1$, in which case it admits precisely the following small Seifert fibered surgeries:
\begin{itemize}
\item $P(-2,3,2q+1)(4q+6) = S^2(1/2,-1/4,2/(2q-5))$
\item $P(-2,3,2q+1)(4q+7) = S^2(2/3,-2/5,1/(q-2))$
\end{itemize}
\end{theorem}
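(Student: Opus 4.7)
The plan is to treat the two directions of the biconditional separately: realize the two surgeries when $p=1$, and obstruct small Seifert fibered surgery in every other case.

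For the realization direction, I would apply the Montesinos trick to $P(-2, 3, 2q+1)$. Since this is a Montesinos knot, its double branched cover is a Seifert fibered space over $S^2$ with three exceptional fibers of multiplicities $2$, $3$, and $2q+1$, and Dehn surgery on the knot lifts (via the strong inversion) to a Dehn filling on this Seifert fibered manifold. Standard continued-fraction calculations then identify the resulting manifold for any rational slope. For the slopes $4q+6$ and $4q+7$ I would verify that the double branched cover remains Seifert fibered with the invariants implied by the theorem, and then descend to show the surgery manifold itself carries the claimed Seifert structure $S^2(1/2,-1/4,2/(2q-5))$ or $S^2(2/3,-2/5,1/(q-2))$. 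This step is computational but routine once the sign and normalization conventions are fixed.

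For the obstruction direction, the strategy is to combine Wu's earlier work with a character-variety obstruction. Wu's results cited in the introduction restrict the candidate Seifert fibered surgery slopes on $P(-2, 2p+1, 2q+1)$ to a bounded interval. The remaining candidates I would rule out using the Culler--Shalen seminorm of Boyer--Zhang, which has been computed (or at least bounded) for hyperbolic pretzel knots by Mattman and others. The seminorm provides a quantitative relationship between boundary slopes and small Seifert fibered slopes; for $p\neq 1$, one aims to show that every candidate slope violates this bound. The sign conventions partition the parameter space into three subcases ($p\geq 2$ with $p<q$; $p\leq -2$ with $|p|<|q|$; $p\geq 2$ with $q<0$), each requiring its own arithmetic verification.

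The main obstacle will be the uniformity of the obstruction argument across the two-parameter family. While it is straightforward in principle to obstruct surgery on any single hyperbolic pretzel knot via Culler--Shalen or $A$-polynomial methods, establishing the bound uniformly in $p$ and $q$ requires a careful understanding of how the seminorm (equivalently, the character variety) depends on these parameters. The realization direction, by contrast, is concrete and primarily a matter of careful tangle and continued-fraction bookkeeping.
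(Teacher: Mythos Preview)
Your realization paragraph contains a conceptual muddle. The double branched cover of $S^3$ over a Montesinos link is indeed a closed Seifert fibered space, but that closed manifold is not where Dehn surgery on the knot takes place, and there is nothing there to ``Dehn fill.'' The Montesinos trick runs the other way: the strongly invertible knot exterior is itself a branched double cover of a \emph{tangle} $\mathcal T_{p,q}$ in a ball, and $K_{p,q}(\alpha_r)$ is the branched double cover of $S^3$ over the link $\mathcal T_{p,q}(r)$ obtained by rational tangle filling. The paper realizes the two surgeries by exhibiting $\mathcal T_{1,q}(1)$ and $\mathcal T_{1,q}(2)$ explicitly as length-three Montesinos links, whose branched double covers are then automatically the claimed small Seifert spaces. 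As written, your version does not describe a well-defined computation.

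Your obstruction strategy is genuinely different from the paper's and, as you yourself flag, has a real gap: Culler--Shalen seminorm bounds uniform in both $p$ and $q$ are not available in the literature (Mattman's computations treat one-parameter families such as $(-2,3,n)$, not the full two-parameter family), and you give no indication of how to produce them. Also, the slope bound you attribute to Wu is not Wu's; in the paper it comes from the observation that $K_{p,q}$ bounds a punctured Klein bottle, giving a toroidal surgery at $\alpha_0$, after which Lackenby--Meyerhoff (and a refinement of Boyer--Gordon--Zhang) bounds any other exceptional slope.

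The paper's obstruction stays entirely on the branched-cover side. Removing the $q$-twist region from $\mathcal T_{p,q}(r)$ yields a tangle $\mathcal S_{p,r}$ whose branched double cover $N_{p,r}$ is shown to be hyperbolic; Lackenby--Meyerhoff then forces $|q|\le 8$ (and symmetrically $|p|\le 8$), reducing the problem to a finite list of quotient links $L_{p,q,r}$. A general principle (if $K(\alpha)$ is small Seifert fibered then the quotient link is either a Seifert link or a length-three Montesinos link) then applies, and the remaining cases are eliminated one by one using Khovanov width, determinant, Rasmussen invariant, Alexander/Jones/Kauffman/HOMFLYPT polynomials, partly by computer. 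Your character-variety route, even if it could be made to work, would trade this finite invariant check for uniform seminorm estimates that do not currently exist.
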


\begin{theorem}\label{thm:main2}
Hyperbolic pretzel knots of the form $P(3,3,m)$ or $P(3,3,2m,-1)$ admit no small Seifert fibered surgeries.  Pretzel knots of the form $P(3,-3,m)$, with $m>1$, admit small Seifert fibered surgeries precisely in the following cases:
\begin{itemize}
\item $P(3,-3, 2)(1) = S^2(1/3,1/4,-3/5)$
\item $P(3,-3, 3)(1) = S^2(1/2,-1/5,-2/7)$
\item $P(3,-3, 4)(1) = S^2(-1/2,1/5,2/7)$
\item $P(3,-3, 5)(1) = S^2(2/3,-1/4,-2/5)$
\item $P(3,-3, 6)(1) = S^2(1/2,-2/3,2/13)$
\end{itemize}
\end{theorem}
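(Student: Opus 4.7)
The plan is to reduce each knot in the three families to a finite list of candidate surgery slopes, verify the five exceptional surgeries explicitly, and obstruct every other candidate using Floer-theoretic invariants. First, I would apply the constraints from Wu \cite{wu_plbs,wu_large,wu_imm} that any small Seifert fibered surgery slope on a hyperbolic Montesinos knot of length three must be an integer lying in a narrow window determined by boundary slopes of essential surfaces in the knot exterior together with a Culler-Shalen seminorm bound. Applied to $P(3,3,m)$, $P(3,3,2m,-1)$, and $P(3,-3,m)$, this produces a short list of candidate slopes per knot, and for the five knots in the conclusion, the slope $1$ should be among (or the only) survivor.

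Next, I would verify the five positive cases individually via the Montesinos trick. Each pretzel knot $P(3,-3,m)$ is a length-three Montesinos knot, so the $(1)$-surgery is the double branched cover of $S^3$ over a link obtained by a rational tangle replacement in the standard tangle decomposition of the knot; tangle calculus then rewrites the branch set so that the double cover is manifestly the Seifert fibered space $S^2(\cdot,\cdot,\cdot)$ claimed in the statement. Independently, Kirby calculus on a standard surgery presentation of the pretzel knot, together with slam-dunks and handle slides, should confirm each identification.

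For the negative results, I would use Heegaard Floer correction terms ($d$-invariants) to obstruct each remaining candidate slope. Small Seifert fibered spaces bounding negative-definite plumbings have highly constrained $d$-invariants, and one computes those of the candidate surgeries either directly from the knot Floer complex or via the Ozsv\'ath-Szab\'o surgery formula using the Alexander polynomial and genus of the knot. A mismatch between the computed correction terms and the values permitted for any small Seifert fibered target then rules out the Seifert fibered structure. Casson-Walker parity checks and Rokhlin-type arguments are available as supplementary obstructions whenever the $d$-invariant computation is inconclusive.

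The main obstacle will be the uniform handling of the two infinite families $P(3,3,m)$ and $P(3,3,2m,-1)$: both the candidate slopes and their correction terms depend on $m$, so a naive case analysis does not terminate. I anticipate the resolution requires a closed-form expression for the relevant $d$-invariants as an explicit function of $m$, perhaps obtained via the surgery exact triangle or a recursion on the Alexander polynomial, from which a uniform obstruction emerges for all but finitely many $m$ and reduces the remaining work to a finite verification for small values.
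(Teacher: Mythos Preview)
Your approach is genuinely different from the paper's, and the differences are worth spelling out.

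The paper uses no Floer theory at all. Instead it exploits the symmetries of these pretzel knots: for the even-parameter families $P(3,\pm 3,-2m)$ and $P(3,3,2m,-1)$ it uses the strong inversion and the Montesinos trick, and for the odd-parameter family $P(3,-3,2m+1)$ it uses the period-$2$ symmetry (these knots have \emph{no} strong inversion). In each case the surgery $K(\alpha)$ is realized as a double branched cover of $S^3$ or a lens space, and by Proposition~\ref{prop:seifertormont} (or the analogous period-$2$ analysis) the branch set must be a Seifert link or a Montesinos link if the surgery is small Seifert fibered. The paper then reduces the infinite parameter $m$ to a finite range by removing the $m$-twist region to form an auxiliary tangle, showing that its branched double cover is hyperbolic, and applying the Lackenby--Meyerhoff bound of Theorem~\ref{thm:lack-meyer}. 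Slope bounds come from the punctured Klein bottle (giving a toroidal filling) together with Theorem~\ref{BGZ:weak} or, in the genus-one cases, Theorem~\ref{thm:BGZgenusone}. The remaining finite list of branch links is then obstructed from being Montesinos or Seifert links using Khovanov width, the Rasmussen invariant, determinants, and a computer enumeration (Method~1).

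Your plan is plausible in outline but has two soft spots. First, the slope bounds you attribute to Wu are not quite what his papers provide; Wu restricts which Montesinos knots can admit small Seifert fibered surgeries, but the actual slope windows here come from the toroidal/Klein-bottle fillings and the BGZ bounds, which you would need to invoke separately. Second, your obstruction step---ruling out \emph{every} small Seifert fibered space with the correct first homology via $d$-invariants---is the genuinely hard part and is not automatic; you would need either an L-space hypothesis (not available here in general) or a structural argument that the $d$-invariant spectrum of the surgery cannot match that of any $S^2(a,b,c)$. Your candid acknowledgment that the infinite families are the obstacle is accurate, but note that the paper's resolution (hyperbolicity of an auxiliary manifold plus Lackenby--Meyerhoff) is rather different from, and arguably cleaner than, deriving closed-form $d$-invariant recursions. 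Finally, for your verification of the five positive cases: the Montesinos trick as you describe it requires a strong inversion, which $P(3,-3,3)$ and $P(3,-3,5)$ lack, so for those two you would need to fall back on your Kirby-calculus check or on the period-$2$ analysis the paper uses.
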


\begin{theorem}\label{thm:main3}
The pretzel knots $P(3,\pm4, \pm5)$ and  $P(3,4,5,-1)$ admit no small Seifert fibered surgeries.
\end{theorem}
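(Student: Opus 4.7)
The plan is to treat each of the five pretzel knots $P(3,\pm 4,\pm 5)$ (up to mirroring) and $P(3,4,5,-1)$ separately, by enumerating a finite list of candidate surgery slopes and obstructing each from being a small Seifert fibered filling. The first step is slope reduction. By the work of Wu already cited in the introduction, together with the Lackenby--Meyerhoff universal bound of ten exceptional slopes on a hyperbolic knot complement, only a short explicit list of slopes needs to be considered for each knot; in practice SnapPy-style enumeration of the exceptional fillings narrows the list further by identifying most fillings as either hyperbolic or toroidal outright, leaving only a handful of genuine SFS candidates.

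For each remaining candidate, I would invoke an obstruction from Heegaard Floer homology. A small Seifert fibered space $S^2(\alpha_1/\beta_1,\alpha_2/\beta_2,\alpha_3/\beta_3)$ has correction terms $d(Y,\mathfrak{t})$ that are computable from the Seifert invariants by the recursive algorithm of Ozsv\'ath--Szab\'o and N\'emethi. On the other side, the correction terms of the surgery $K(n)$ are obtained from the knot Floer complex of the pretzel knot $K$ via the Ozsv\'ath--Szab\'o rational surgery formula. A mismatch between these two sets of invariants for any single $\text{spin}^c$-structure rules out a small SFS structure on $K(n)$. For the alternating knot $P(3,4,5)$ the argument simplifies considerably: the only alternating knots admitting an L-space surgery are the $(2,2n+1)$-torus knots, so combined with the Lisca--Stipsicz L-space criterion for small Seifert fibered spaces this disposes immediately of every candidate whose target SFS is an L-space, leaving only small SFS with mixed-sign base orbifold coefficients for further $d$-invariant analysis.

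The main obstacle is carrying out the knot Floer computation for the non-alternating members of the list, and especially for the length-four knot $P(3,4,5,-1)$, where the combinatorics of the $\text{spin}^c$-structures and the Seifert surface become substantial. For small surgery coefficients a direct hand computation using a genus-minimizing surface should suffice, while for larger slopes it is probably cleanest to certify hyperbolicity or toroidality of the relevant filling using rigorous SnapPy verification and conclude that no Seifert fibered structure is possible. Throughout, one must manage the mirror and orientation conventions $K(\alpha)=\overline{K}(-\alpha)$ carefully, since some of the five knots in the statement are equivalent under the mirror symmetry and so need not be handled twice.
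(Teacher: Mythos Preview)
Your approach is genuinely different from the paper's, and as written it has two real gaps.

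First, the slope reduction for $P(3,4,5,-1)$ is not established. The Lackenby--Meyerhoff bound is a \emph{distance} bound between two exceptional slopes; to use it you need a known exceptional filling as an anchor, and the paper explicitly notes that for this knot there is none available. Your appeal to ``SnapPy-style enumeration'' does not, by itself, produce a finite list: you would need something like the 6--theorem together with a rigorous cusp-shape computation to certify that all but finitely many slopes are hyperbolic, and you have not spelled this out. The paper sidesteps the issue entirely: it bounds only the denominator (using that $P(3,4,5,-1)$ has tunnel number greater than one, so $|s|\le 4$) and then proves a Khovanov long-exact-sequence lemma showing $|\Kh(L_{a/b+t})|\ge 4$ for \emph{all} integers $t$ and all relevant $a/b$, handling infinitely many slopes at once.

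Second, the $d$--invariant obstruction is underspecified. Saying ``a mismatch for any single $\mathrm{spin}^c$--structure rules out a small SFS structure'' presupposes a specific target Seifert fibered space to compare against. To obstruct $K(n)$ from being \emph{any} small Seifert fibered space you must either enumerate all such spaces with the correct first homology and rule each out, or invoke a structural constraint on $\widehat{HF}$ of Seifert fibered spaces; you do neither. The paper avoids this entirely via Proposition~\ref{prop:seifertormont}: strong invertibility forces the quotient link $L_{r/s}$ to be either a Seifert link or a length-three Montesinos link whenever $K(r/s)$ is small Seifert fibered, and those two link classes are then obstructed directly with Khovanov width, component knot types, and a finite invariant comparison (Method~1). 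This replaces an open-ended comparison against all Seifert fibered spaces by a concrete, finite check on a single link.
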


\begin{theorem}\label{thm:main4}
Suppose that $K$ is a non-pretzel Montesinos knot and $K(\alpha)$ is a small Seifert fibered space.  Then either $K=K[-1/2,2/5, 1/(2q+1)]$ for some $q\geq 5$, or $K$ is on the following list and has the described surgeries.
\begin{itemize}
\item $K[1/3, -2/3,2/5](-5) = S^2(1/4,2/5,-3/5)$
\item $K[-1/2, 1/3, 2/7](-1) = S^2(1/3, 1/4, -4/7)$
\item $K[-1/2, 1/3, 2/7](0) = S^2(1/2, 3/10,-4/5)$
\item $K[-1/2, 1/3, 2/7](1) = S^2(1/2, 1/3, -16/19)$
\item $K[-1/2, 1/3, 2/9](2) = S^2(1/2, -1/3, -3/20)$
\item $K[-1/2, 1/3, 2/9](3) = S^2(1/2, -1/5, -3/11)$
\item $K[-1/2, 1/3, 2/9](4) = S^2(-1/4, 2/3, -3/8)$
\item $K[-1/2, 1/3, 2/11](-2) = S^2(-2/3, 2/5, 2/7)$
\item $K[-1/2, 1/3, 2/11](-1) = S^2(-1/2, -2/7, 2/9)$
\item $K[-1/2, 1/3, 2/5](3) = S^2(1/2, -1/3, -2/15)$
\item $K[-1/2, 1/3, 2/5](4) = S^2(1/2, -1/6, -2/7)$
\item $K[-1/2, 1/3, 2/5](5) = S^2(-1/3, -1/5, 3/5)$
\item $K[-1/2, 1/5, 2/5](7) = S^2(1/2, -1/5, -2/9)$
\item $K[-1/2, 1/5, 2/5](8) = S^2(-1/4, 3/4, -2/5)$
\item $K[-1/2, 1/7, 2/5](11) = S^2(-1/3, 3/4, -2/7)$
\end{itemize}
\end{theorem}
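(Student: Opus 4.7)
The plan is to dispose of the three families of candidate knots from Wu's list separately: the singleton $K[1/3,-2/3,2/5]$, the four knots $K[-1/2,1/3,2/(2a+1)]$ with $a\in\{3,4,5,6\}$, and the infinite family $K[-1/2,1/(2q+1),2/5]$, $q\geq 1$. For each individual knot, Wu's results on essential laminations and immersed surfaces in Montesinos knot exteriors restrict the possible exceptional slopes to a short finite list; one then treats each candidate slope case by case.

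For the positive identifications in the table (each equality $K(\alpha)=S^2(r_1,r_2,r_3)$), the natural tool is the Montesinos trick. Since every Montesinos knot is strongly invertible, $\alpha$-surgery on $K$ lifts in the branched double cover to a rational tangle replacement performed on the Montesinos tangle diagram of $K$. Carrying out this tangle replacement and the ensuing Kirby moves, one reads off a Seifert fibered structure on the double cover, then descends via the involution to exhibit $K(\alpha)$ itself as the asserted $S^2(r_1,r_2,r_3)$; the Seifert invariants on the right-hand side are exactly those recorded by the tangle arithmetic.

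For the negative statements, combine two kinds of obstruction. First, the order $|H_1(K(\alpha))|$ is immediate from the Alexander polynomial of $K$, and for any prospective $S^2(r_1,r_2,r_3)$ the order of $H_1$ is determined by the Seifert data; this eliminates most candidate slopes at the level of arithmetic. For the survivors, apply the Heegaard Floer correction term ($d$-invariant) obstruction: the $d$-invariants of $K(\alpha)$ can be computed from the knot Floer complex $\mathit{CFK}^\infty(K)$ (available for Montesinos knots), while the $d$-invariants of any small Seifert fibered candidate come from the standard negative-definite plumbing description. A Seifert fibered surgery forces these two families of rational numbers to agree up to a matching of $\text{Spin}^c$ structures, and this constraint is in practice strong enough to rule out each of the remaining slopes.

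The main obstacle is the tail of the infinite family $K[-1/2,1/(2q+1),2/5]$ for $q\geq 5$. The exceptional-slope bounds do not contract to a uniformly bounded list as $q$ grows, and the Heegaard Floer computation, while possible for each fixed $q$, does not admit a clean closed form in $q$ that would settle infinitely many knots at once. Consequently I expect the proof to execute the finite-slope analysis above for $q\leq 4$ explicitly (recovering the $q=1,2,3$ entries of the table and showing $q=4$ yields nothing), and to leave $q\geq 5$ as the single family carved out by the statement of the theorem, pending a uniform-in-$q$ argument that is not presently available.
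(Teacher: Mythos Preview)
Your overall architecture is right: split into Wu's three families, bound the slopes for each candidate knot, verify the positive cases via the Montesinos trick, and rule out the rest case by case; you also correctly identify why $q\geq 5$ survives. However, the paper's obstruction mechanism is quite different from yours, and your proposed one has a gap. The paper does \emph{not} work at the level of $3$--manifold invariants of $K(\alpha)$. Instead, it pushes everything down through the strong inversion: $K(\alpha)$ is the branched double cover of a quotient link $L_\alpha$, and Proposition~\ref{prop:seifertormont} says that if $K(\alpha)$ is small Seifert fibered then $L_\alpha$ must be either a Seifert link or a Montesinos link. The negative cases are then dispatched by showing $L_\alpha$ is neither, using classical link invariants (Khovanov width via Criterion~\ref{crit:width}, Rasmussen invariant and Alexander breadth via Criterion~\ref{crit:positive}, determinants, Jones and Kauffman polynomials via Method~1, and the observation that components of $L_\alpha$ are often identifiable non-$2$-bridge, non-torus knots). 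This converts a $3$--manifold classification into a finite link-tabulation check.

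Your $d$--invariant route, by contrast, is not complete as stated. The phrase ``the $d$--invariants of any small Seifert fibered candidate'' hides the real difficulty: you are not given a candidate $S^2(r_1,r_2,r_3)$ to test against; you must rule out \emph{every} small Seifert fibered space with the correct $|H_1|$, and there is no a priori finite list of such spaces, nor a structural theorem guaranteeing that the $d$--invariants of $K(\alpha)$ fail to match all of them. ``In practice strong enough'' is not an argument. A second, smaller correction: the slope bounds in the paper do not come from Wu's lamination or immersed-surface results (those restrict which \emph{knots} can have exceptional surgeries, and which surgeries can be non-integral). Rather, for each knot the paper locates a known \emph{toroidal} filling from Wu's classification (\cite{wu_toroidal}) and then invokes the Lackenby--Meyerhoff bound $\Delta\leq 8$ to confine the Seifert fibered slopes to a short interval around it; it is precisely the absence of such a toroidal anchor for $K[-1/2,2/5,1/(2q+1)]$ with $q\geq 5$ that leaves that family open.
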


Each of the theorems stated above is proved below using a common procedure.  First, we exploit the symmetries of the Montesinos knots in question to describe the surgery space as a branched double cover of a link.  Next, we use rational tangle filling theory and exceptional surgery bounds to restrict our attention to a finite list of such links, i.e, we restrict the parameters for which the Montesinos knots in question can admit small Seifert fibered surgeries.  Finally, we use knot theory invariants to show that the branched double covers of links on this finite list cannot be Seifert fibered (excepting, of course, the cases that are).  This last step makes use of the Mathematica\textsuperscript{\textregistered} package KnotTheory` \cite{wolfram}.

It should be noted that, concurrent with the preparation of this paper, the author learned that similar results had been obtained by Wu, though using different techniques.  Wu also restricts the families to finite families of surgery spaces, but does so by studying exceptional surgery on \emph{tubed} Montesinos knots (see \cite{wu:wrapped}).  He then appeals to the computer program \emph{Snappex}, to determine the hyperbolic structure of the surgeries in question (see \cite{wu:SFSMontKnots}).  

\begin{organization}
Section \ref{section:preliminaries} presents general background material and outlines how knot invariants will be used to obstruct small Seifert fibered surgeries.  Sections \ref{section:2pq}, \ref{section:33q}, \ref{section:345}, and \ref{section:non-pretzel} present, respectively, the proofs of Theorems \ref{thm:main1}, \ref{thm:main2}, \ref{thm:main3}, and \ref{thm:main4}.
\end{organization}

\begin{acknowledgements}
The author would like to thank his advisor, Cameron Gordon, for many conversations full of insightful advice, patience, and encouragement.  The author is also grateful to Ying-Qing Wu for his friendly correspondence regarding the nature of this problem, and to the referee for his or her careful reading of the manuscript and helpful comments.  This work was supported by NSF grant number DMS-0636643.
\end{acknowledgements}

\subsection{A word on non-integral surgeries}

In a survey by Wu \cite{wu_survey}, it is shown how techniques and results from \cite{britt_exceptional,wu_sutured} can be combined with work of Delman \cite{delman_esslam} to study which  length three Montesinos knots have exteriors that admit persistent essential laminations.

\begin{theorem}\label{thm:non-integral}
Let $K$ be a hyperbolic Montesinos knot of length three.  Then the exterior of $K$ admits a persistent essential lamination, and, thus, cannot admit a non-integral small Seifert fibered surgery, unless $K=K[x,1/p,1/q]$ (or its mirror image), where $x\in\{-1/(2n), -1\pm1/(2n), -2+1/(2n)\}$, and $p,q,$ and $n$ are positive integers.
\end{theorem}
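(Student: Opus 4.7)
The plan is to assemble the theorem from two independent pieces already in the literature. First, Delman's construction produces persistent essential laminations in exteriors of Montesinos knots of length $\geq 3$, subject to a combinatorial condition on the rational-tangle parameters; second, a result of Brittenham (extended in \cite{britt_exceptional,wu_sutured}) guarantees that a persistent essential lamination in a knot exterior remains essential after all but integer slopes of Dehn filling, and in particular rules out a small Seifert fibered filling along any non-integral slope. Combining these, I only need to identify precisely which length-three hyperbolic Montesinos knots fall outside Delman's criterion.

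My first step would be to recall the exact form of Delman's criterion, normalized so that each tangle parameter $p_i/q_i$ is reduced modulo its framing ambiguity. Delman shows that if each of the three rational tangles admits a \emph{suitable} laminar branching surface, and if the three pieces can be glued to produce a lamination transverse to the Conway sphere-decomposition of the knot exterior, then the resulting lamination is persistent and essential. The obstruction to doing this comes only from tangles $p_i/q_i$ that are so close to an integer that no non-trivial branched surface sits inside the corresponding rational tangle in a way compatible with the gluing.

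The second step is a bookkeeping case analysis. For a length-three Montesinos knot $K[p_1/q_1,p_2/q_2,p_3/q_3]$, I would split into cases based on how many of the three tangles are of the form $1/q$ (the ``trivially extendable'' case in Delman's scheme). If at least two of the tangles are \emph{not} of this trivial form, Delman's construction goes through, so the knot admits a persistent essential lamination. Hence failure requires two of the three tangles to already be of the form $1/p,\,1/q$, leaving the third tangle $x$ as the sole potential obstruction. For this remaining tangle, I would enumerate which continued-fraction expansions are blocked from admitting the required branching surface. Using the reflection $K(\alpha)=\overline{K}(-\alpha)$ to reduce to $x<0$ and the standard Montesinos normalization to reduce $x$ modulo $\mathbb{Z}$, these blocked values collapse exactly to the set $x\in\{-1/(2n),\,-1\pm 1/(2n),\,-2+1/(2n)\}$ quoted in the statement.

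Finally, I would apply the persistent-lamination obstruction to all Montesinos knots not in this exceptional family. As soon as a persistent essential lamination exists in the knot complement, it survives Dehn filling along every slope of sufficiently large distance from the meridian, and in particular along every non-integral slope; because small Seifert fibered spaces cannot contain such a lamination, no non-integral small Seifert fibered surgery is possible. The main obstacle I expect is the case analysis in the middle step: carefully verifying that Delman's tangle condition translates, after normalization, to precisely the three-parameter family in the statement (rather than a slightly larger or smaller one), since the normalization conventions for $K[\cdot,\cdot,\cdot]$ and the symmetries of the Montesinos knot must be tracked consistently with those used by Delman and in Wu's survey.
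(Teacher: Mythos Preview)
Your proposal is correct and aligns with the paper's treatment: the paper does not prove this theorem itself but presents it as a known result assembled in Wu's survey \cite{wu_survey} from Delman's construction of essential laminations \cite{delman_esslam} together with the Brittenham--Wu persistence results \cite{britt_exceptional,wu_sutured}, which is precisely the two-piece strategy you outline. The only caveat is that your middle step---verifying that Delman's obstruction collapses to exactly the listed family of $x$-values after normalization---is asserted rather than carried out; since this enumeration is the entire technical content beyond citing the two ingredients, you would need to either reproduce Delman's case analysis or (as the paper does) defer to the literature for it.
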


With this in mind, for many of the families of pretzel knots considered in this paper, it is only necessary to consider integral surgeries.  However, for some families, it is necessary to consider non-integral surgeries.  To be specific, of all the pretzel knots considered in this paper, only the following families could potentially admit non-integral small Seifert fibered surgeries:

\begin{itemize}
\item $P(-2,2p+1, 2q+1)$ with $1\leq p<q$
\item $P(3,3,-2m)$ with $m\geq 2$
\item $K[-1;1/3,1/3,1/2m]$ with $m\geq 1$
\item $P(3,-4,5)$ or $P(3,4,5,-1)$
\end{itemize}

Thus, whenever such a family is considered, we have shown that, in fact, there are no non-integral small Seifert fibered surgeries.  One of the biggest open problems in the study of exceptional Dehn surgery is the following conjecture (see \cite{gordon_survey}).

\begin{conjecture}\label{conj:gordon}
Any Seifert fibered surgery on a hyperbolic knot is integral.
\end{conjecture}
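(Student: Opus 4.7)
Since Conjecture~\ref{conj:gordon} is framed as one of the central open problems in exceptional surgery, any proof I sketch is necessarily speculative. I will outline the most natural line of attack suggested by the techniques already in the paper. The key observation is that Theorem~\ref{thm:non-integral}, which builds on Delman's construction of persistent essential laminations, already yields the conjecture for hyperbolic Montesinos knots of length three outside a tightly restricted family. My plan would be to push the persistent lamination approach to the setting of an arbitrary hyperbolic knot.

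Concretely, for a hyperbolic knot $K \subset S^3$, I would attempt the following three steps. First, construct an essential lamination $\lambda$ in the exterior $E(K)$ whose intersection with $\partial E(K)$ is a train track carrying a slope distinct from the meridian; one would try to produce $\lambda$ from a sutured manifold hierarchy or from a branched surface obtained by splitting along a suitable family of taut surfaces. Second, prove persistence: for every non-integral slope $r$, show that $\lambda$ remains essential in the Dehn filling $E(K)(r)$, for instance by controlling the boundary train track so that the Gabai--Oertel extension procedure applies. Third, invoke Brittenham's analysis of essential laminations in Seifert fibered spaces to rule out a small Seifert fibered filling along $r$.

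The principal obstacle is step one: constructing an essential lamination in a general hyperbolic knot exterior is itself an outstanding open problem, and producing one whose boundary slope is well separated from the meridian is strictly harder still. Outside the arborescent, alternating, closed-braid, and fibered settings there is no known uniform construction. If this geometric approach were to stall, I would fall back on a Floer-theoretic attack, using the fact that many small Seifert fibered $L$-spaces arise only from integral surgery on knots, via the Ozsv\'ath--Szab\'o formula for the Heegaard Floer homology of surgeries and the Ni--Wu constraints on $L$-space slopes. The remaining difficulty there is handling small Seifert fibered surgeries which are not $L$-spaces, where the Heegaard Floer machinery gives no direct obstruction; this is where I expect the proof to genuinely require a new idea, and it is the reason the conjecture has resisted a uniform solution.
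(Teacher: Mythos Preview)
The statement you are attempting to prove is Conjecture~\ref{conj:gordon}, which the paper presents explicitly as an open problem (``one of the biggest open problems in the study of exceptional Dehn surgery''). The paper does not prove it; rather, it proves the special case for hyperbolic arborescent knots (the theorem immediately following the conjecture), as a corollary of the classification results in Theorems~\ref{thm:main1}--\ref{thm:main4} together with prior work on arborescent knots of length at least four and 2-bridge knots. So there is no proof in the paper for your proposal to be compared against.

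Your proposal is not a proof but, as you yourself say, a speculative research outline. The principal gap is the one you already identify: step one---producing a persistent essential lamination in an arbitrary hyperbolic knot exterior with boundary slope controlled relative to the meridian---is an open problem in its own right, and nothing in your sketch indicates how to resolve it. Delman's construction, which underlies Theorem~\ref{thm:non-integral}, is specific to Montesinos knots and does not generalize in any known way. Your Floer-theoretic fallback has the same defect you note: it cannot handle non-$L$-space small Seifert fibered surgeries, and there is no proposed mechanism to close that gap. In short, you have accurately described why the conjecture is hard, but you have not supplied a proof; the honest status is that Conjecture~\ref{conj:gordon} remains open and your write-up should present it as such rather than as something with a proof attempt attached.
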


The results of this paper are the final steps of an affirmative answer to Conjecture \ref{conj:gordon} in the case of hyperbolic arborescent knots.

\begin{theorem}
Any Seifert fibered surgery on a hyperbolic arborescent knot is integral.
\end{theorem}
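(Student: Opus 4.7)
The plan is to assemble this final statement as a consolidation of previously established results together with the contributions of this paper. Since every hyperbolic arborescent knot falls into one of the following mutually exhaustive classes, I would argue case by case: (i) arborescent knots of length at least four, (ii) $2$-bridge knots (equivalently, Montesinos knots of length at most two), and (iii) Montesinos knots of length three. For class (i), Wu's exceptional surgery classification \cite{wu_large} already implies that every Seifert fibered surgery is integral, and for class (ii), the Britt--Wu classification for $2$-bridge knots \cite{britt-wu:2bridge} does the same. Thus both reductions appear in the literature and need only be cited.

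The new input concerns class (iii). First I would invoke Wu's preliminary restrictions \cite{wu_plbs,wu_large,wu_imm} to reduce length-three hyperbolic Montesinos knots potentially admitting small Seifert fibered surgeries to the explicit finite list of pretzel families considered in Theorems \ref{thm:main1}, \ref{thm:main2}, and \ref{thm:main3}, together with the non-pretzel families considered in Theorem \ref{thm:main4}. An inspection of each theorem shows that every surgery slope appearing in the conclusion is an integer: the slopes $4q+6$ and $4q+7$ in Theorem \ref{thm:main1}, the slope $1$ throughout Theorem \ref{thm:main2}, and the slopes $-5,-2,-1,0,1,\dots,11$ listed in Theorem \ref{thm:main4}; Theorem \ref{thm:main3} rules out SF surgeries entirely. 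So for every Montesinos knot for which an SF surgery is actually produced in this paper, that surgery is integral.

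The one subtlety is the family $K[-1/2,2/5,1/(2q+1)]$ with $q\geq 5$, which Theorem \ref{thm:main4} does not resolve. For this family I would apply Theorem \ref{thm:non-integral}: the exception there requires $K$ to have the form $K[x,1/p,1/q]$ with the last two tangle slopes reciprocals of integers, whereas the middle tangle here is $2/5$, which is not of the form $1/p$. Consequently the exterior of $K[-1/2,2/5,1/(2q+1)]$ carries a persistent essential lamination and therefore admits no non-integral small Seifert fibered surgery. A parallel check covers the pretzel families enumerated in the ``word on non-integral surgeries" subsection ($P(-2,2p+1,2q+1)$, $P(3,3,-2m)$, $K[-1;1/3,1/3,1/2m]$, $P(3,\pm4,5)$ and $P(3,4,5,-1)$); in each case, Theorems \ref{thm:main1}--\ref{thm:main3} already show that the only surviving SF surgeries occur at integer slopes.

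I expect no single step to be technically difficult; the work is bookkeeping, since all the deep ingredients (Wu's length-four classification, Britt--Wu for $2$-bridge knots, Delman's laminations packaged as Theorem \ref{thm:non-integral}, and the four main theorems of this paper) are already on the table. The main potential pitfall is confirming that the remaining open family $K[-1/2,2/5,1/(2q+1)]$ is genuinely outside the exceptional list of Theorem \ref{thm:non-integral}, so that even though this paper cannot decide whether those knots admit SF surgeries at all, it can guarantee that any such surgery must be integral. Once that is verified, combining the three classes gives the stated theorem.
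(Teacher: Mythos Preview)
Your proposal is correct and follows essentially the same approach the paper uses (implicitly): the paper presents this theorem without a detailed proof, simply noting that its results are ``the final steps'' once combined with Theorem~\ref{thm:non-integral} and the prior classifications for length~$\geq 4$ arborescent knots and $2$-bridge knots. Your explicit verification that $K[-1/2,2/5,1/(2q+1)]$ falls outside the exceptional list of Theorem~\ref{thm:non-integral} (since the $2/5$ tangle is never equivalent to a $1/p$ tangle and cannot serve as the parameter $x$) is exactly the observation needed to handle the one family the paper leaves open.
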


\subsection{Open questions}

Unfortunately, the techniques of this paper are insufficient to complete the classification of Seifert fibered surgery on Montesinos knots.  We are left with the following question, which is the final step in a complete classification of exceptional surgery on arborescent knots.

\begin{question}\label{question1}
Do the Montesinos knots $K[-1/2, 2/5, 1/(2q+1)]$ with $q\geq 5$ admit small Seifert fibered surgeries?
\end{question}

\section{Preliminaries}\label{section:preliminaries}

\subsection{Dehn surgery}

Let $K$ be a knot in $S^3$, and let $N(K)$ be a regular neighborhood of $K$.  Let $M_K = \overline{S^3\backslash N(K)}$ be the exterior of $K$.  The set of isotopy classes of simple closed curves on $\partial N(K) = \partial M_K$ is in bijection with $H_1(\partial M_K)$, the latter of which is naturally generated by two elements: $[\mu]$ and $[\lambda]$, where $[\mu]$ generates $H_1(M_K)\cong \Z$, $[\lambda]=0\in H_1(M_K)$, and $\mu$ and $\lambda$ intersect geometrically once on $\partial M_K$.  Orient $\mu$ and $\lambda$ so that $\mu\cdot\lambda=+1$.  The unoriented isotopy class of a simple closed curve $\gamma\subset \partial M_K$ is called a \emph{slope} and can be thought of as an element $m/l\in\Q\cup\{\infty\}$, where $[\gamma] =m[\mu]+l[\lambda]$ in $H_1(\partial M_K)$.  The curves $\mu$ and $\lambda$ are called the \emph{meridian} and the \emph{longitude}, respectively.

Given two slopes $\alpha$ and $\beta$ on $T^2$, let the \emph{distance} between $\alpha$ and $\beta$, $\Delta(\alpha, \beta)$ be their minimal geometric intersection number. If $\alpha = m/l$ and $\beta=m'/l'$, then we have $\Delta(\alpha,\beta) = |ml'-m'l|$.

Let $V$ be a solid torus, and let $\varphi:\partial V\to\partial M_K$ be a homeomorphism which takes the meridian of $V$ to a slope $\alpha$ on $\partial M_K$.  Then \emph{Dehn surgery on $K$ along $\alpha$}, or $\alpha$-\emph{Dehn surgery on $K$}, is the space $K(\alpha) = M_K\cup\varphi V$.  See Figure \ref{figure:Dehnsurgery}.  For a general overview of the theory of Dehn surgery, a subject that has been well-studied since its introduction by Dehn in 1910 \cite{dehn}, see \cite{gordon_survey}.

\begin{figure}\label{figure:Dehnsurgery}
\centering
\includegraphics[scale = .7]{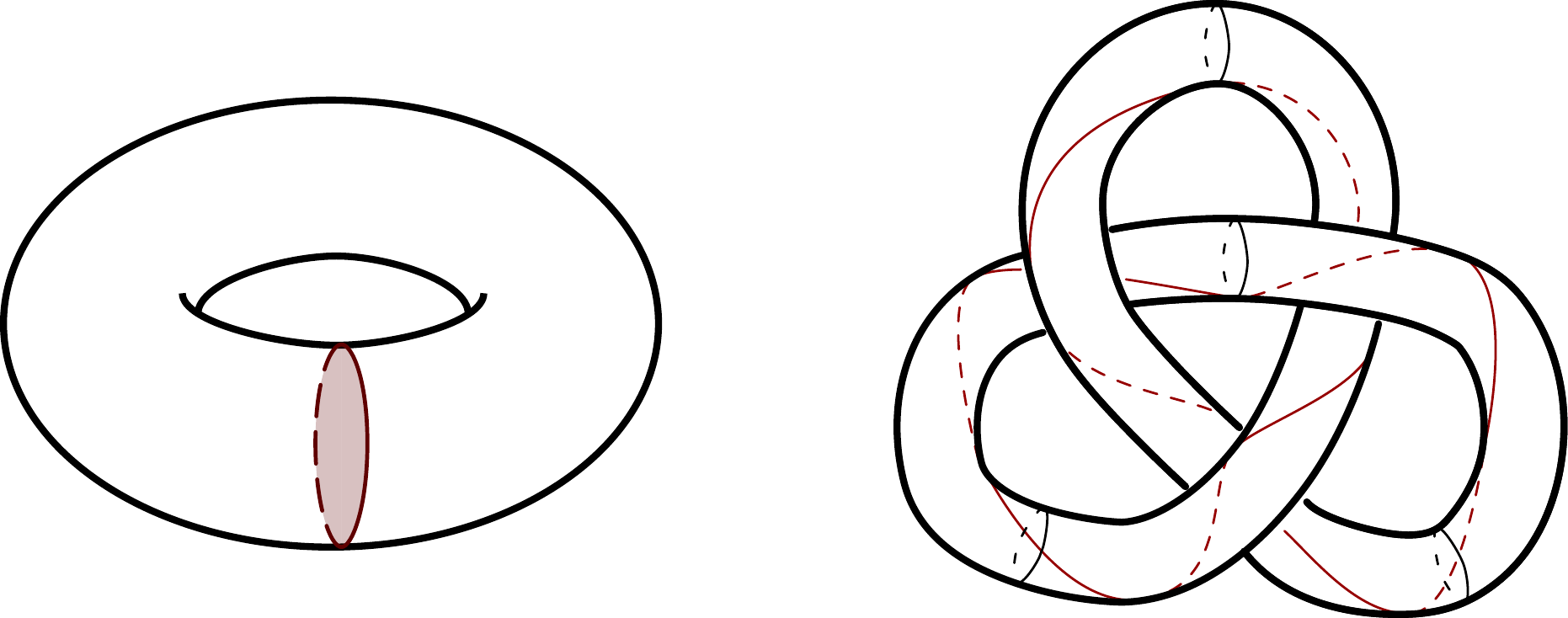}
\put(-310,0){$\star\times D^2$}
\put(-205,65){\LARGE$\stackrel{\varphi}{\longrightarrow}$}
\caption{On the right, we see the exterior, $M_K$, of the left-handed trefoil.  The surgery space $K(0)$ is formed by filling the boundary of $M_K$ with a solid torus such that the meridian maps to a 0-slope (a longitude of $K$) on $\partial M_K$.}
\end{figure}

Dehn surgery generalizes nicely to manifolds $M$ with a torus boundary component $T\subset\partial M$, where $M$ may not be the complement of knot in $S^3$.  Let $\alpha\subset T$ be a slope, then \emph{$\alpha$-Dehn filling of $M$ on $T$} is the space $M(\alpha) = M\cup_\varphi V$, where $\varphi:\partial V\to T$ sends the meridian of $V$ onto $\alpha$.  One difference in this scenario is that there may be no canonical way to distinguish a longitude on $T$, however, $\Delta(\alpha,\beta)$ is still well-defined for any pair of slopes, $\alpha$ and $\beta$.

\subsection{Cable spaces}

Let $V$ be a solid torus, and let $J$ be a $(p,q)$-curve inside $V$ (see Figure \ref{figure:CableAndCrossSection}).  The \emph{cable space}, $C(p,q)$, is the space formed by removing a regular neighborhood of $J$.  Let $T_1 =\partial V$ and $T_0 = \partial N(J)$.  There is a properly embedded annulus, $A$, connecting the two boundary components such that $A\cap T_1$ is a $p/q$-curve (in terms of the standard meridian and longitude on $V$) and  $\gamma =A\cap T_0$ is a $pq/1$-curve (see Figure \ref{figure:CableAndCrossSection}).  Let $\mu$ and $\lambda$ be some choice of meridian and longitude for $T_0$.  Then the slope $\gamma$ is called the \emph{cabling slope} for $C(p,q)$.

\begin{figure}\label{figure:CableAndCrossSection}
\centering
\includegraphics[scale = .9]{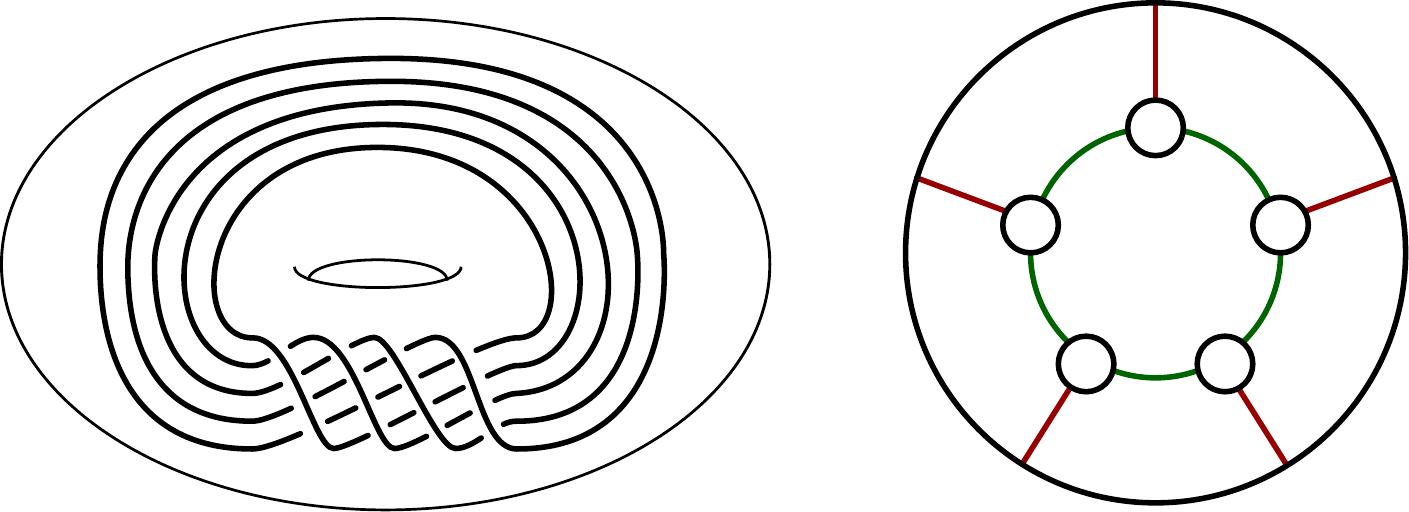}
\put(-117,86){\Large$A$}
\put(-57,78){\Large$A'$}
\caption{On the left, we see a $(4,5)$-curve $J$ inside a solid torus, $V$, and, on the right, we see a cross section of $V-N(J)$, along with two interesting annuli, $A$ and $A'$.}
\end{figure}

Let $A'$ be a properly embedded annulus such that $A'\cap T_0$ is two $pq/1$-curves, parallel to each other and to $\gamma$ (see Figure \ref{figure:CableAndCrossSection}).  Now, let $C(p,q)(\alpha)$ denote $\alpha$-Dehn filling on $T_0$.  Then, if $\alpha = \gamma$, this filling has the effect of capping off $A'$ to form a separating 2-sphere, $S$, and capping off one boundary component of $A$ to form a disk, $D$, which intersects $T_1$ in a $p/q$-curve.  The result is that $C(p,q)(\gamma) = (S^1\times D^2)\#L(q,p)$.

Let $t:C(p,q)\to C(p,q)$ represent Dehn twisting along $A$.  Then, $t^l(\mu) = \mu + l(pq\mu + \lambda)) = (lpq + 1)\mu + l\lambda$.  Since $C(p,q)(\mu) = S^1\times D^2$, it follows that $C(p,q)(t^l(\mu)) = S^1\times D^2$.  So, slopes of the form $(lpq+1)/l$ all correspond to surgery slopes on $T_0$ that yield solid tori.

This shows that cable spaces have infinitely many fillings returning solid tori, all at distance one from the cabling slope.

On the other hand, we have the following lemma, which follows from the Cyclic Surgery Theorem \cite{cgls} and work of Gabai \cite{gabai}.  See \cite{kang} for a proof and more general discussion.

\begin{lemma}\label{lemma:cable}
\begin{enumerate}[(a)]
\item Let $M\not=T^2\times I$ be an irreducible and $\partial$-irreducible 3-manifold with a torus boundary component, $T_0$.  Let $T_1$ be an incompressible torus in $M$, distinct from $T_0$.  If $\alpha$ and $\beta$ are slopes on $T_0\subset \partial M$ with $\Delta(\alpha, \beta)\geq 2$, such that $T_1$ is compressible in  $M(\alpha)$ and $M(\beta)$, then $M$ is a cable space with cabling slope $\gamma$ such that $\Delta(\alpha, \gamma) = \Delta(\beta,\gamma) = 1$.
\item Let $M\not=T^2\times I$ be an irreducible and $\partial$-irreducible 3-manifold with a torus boundary component, $T_0$.  Let $T_1$ be an incompressible torus in $M$, distinct from $T_0$.  If $\alpha$ and $\beta$ are slopes on $T_0\subset \partial M$ with $\Delta(\alpha, \beta)=1$, such that $T_1$ is compressible in  $M(\alpha)$ and $M(\beta)$, then either
\begin{enumerate}
\item $M$ is a cable space with cabling slope $\alpha$ or $\beta$, or
\item $M$ is the exterior of a braid in a solid torus, $M(\alpha)$ and $M(\beta)$ are solid tori, and $\Delta(\eta_\alpha,\eta_\beta)\geq 4$, where $\eta_\alpha$ and $\eta_\beta$ are the induced slopes of the meridian on $T_0$.
\end{enumerate}
\end{enumerate}
\end{lemma}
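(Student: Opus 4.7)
The plan is to reduce to studying $N$, the component of $M$ cut open along $T_1$ that contains $T_0$. Since $T_1$ is incompressible in $M$ while $M$ is irreducible and $\partial$-irreducible, $N$ inherits these properties and has $T_1$ as an incompressible boundary component. Filling $T_0 \subset \partial N$ along $\alpha$ or $\beta$ reproduces the fillings of $M$ on that side of $T_1$, so the hypothesis becomes the statement that $T_1$ is compressible in both $N(\alpha)$ and $N(\beta)$. The task is thus to classify $3$-manifolds with (at least) two torus boundary components in which a neighboring torus compresses after two distinct fillings of another.

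For part (a), my approach is to pick compressing disks $D_\alpha \subset N(\alpha)$ and $D_\beta \subset N(\beta)$, puncture them with the cores of the filling solid tori to get essential punctured surfaces $P_\alpha, P_\beta \subset N$ whose boundaries on $T_0$ have slopes $\alpha$ and $\beta$, and study the intersection graph $P_\alpha \cap P_\beta$. The Cyclic Surgery Theorem of Culler--Gordon--Luecke--Shalen bounds the distance between fillings producing non-hyperbolic pieces, while Gabai's work on essential laminations shows that, outside of very controlled situations, an incompressible torus cannot be compressed by two distinct fillings. These ingredients force $N$ to be Seifert fibered, and the combinatorics of the intersection graph identify it as a cable space $C(p,q)$. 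Once this is established, the computation in the text showing that solid-torus-producing slopes on $T_0$ in a cable space form the family $\{(lpq+1)/l : l\in\mathbb{Z}\}$---all at distance $1$ from the cabling slope $\gamma$---immediately yields $\Delta(\alpha,\gamma) = \Delta(\beta,\gamma) = 1$.

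For part (b), the same framework applies, but $\Delta(\alpha,\beta) = 1$ is small enough that an additional family enters: $N$ can be the exterior of a non-trivial braid in a solid torus. In this case both $N(\alpha)$ and $N(\beta)$ are themselves solid tori, and the bound $\Delta(\eta_\alpha,\eta_\beta) \geq 4$ reflects the requirement that the braid have winding number at least $2$ in order for $T_1$ to be incompressible in $N$. The main obstacle will be the detailed intersection-graph combinatorics needed to rule out all non-cable, non-braid structures for $N$; rather than reproduce these, I would cite Kang's exposition \cite{kang}, which assembles the Cyclic Surgery Theorem and Gabai's results into exactly the statements required here.
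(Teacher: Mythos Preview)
Your proposal matches the paper's treatment: the paper does not give a proof either, but simply states that the lemma follows from the Cyclic Surgery Theorem \cite{cgls} and work of Gabai \cite{gabai}, referring the reader to \cite{kang} for details. Your outline is a reasonable expansion of that citation, and since you ultimately defer to Kang as well, there is no substantive difference.

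A minor note: your descriptions of the inputs are slightly off. The relevant content from \cite{cgls} is not the Cyclic Surgery Theorem proper but the $2$--handle addition results in that paper, and the relevant work of Gabai is his classification of knots in solid tori admitting nontrivial solid-torus surgeries (the $1$--bridge braid result), not his work on essential laminations. These imprecisions do not affect the validity of your proposal, since you correctly land on \cite{kang} as the reference that packages everything in the needed form.
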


\subsection{Seifert fibered spaces}

A \emph{fibered solid torus} is formed by gluing the ends of $D^2\times I$ together with a twist $\rho$ through $\frac{2\pi p}{q}$, where $q\geq1$ and $p$ and $q$ are relatively prime.  There are two types of \emph{fibers}:  the \emph{central fiber}, i.e., the image of $(0,0)\times I$ after gluing, and the union of the arcs $x\times I, \rho(x)\times I, \cdots, \rho^{q-1}(x)\times I$, for $x\not=(0,0)$. 

A  \emph{Seifert fibered space} is a 3-manifold that can be decomposed as a disjoint union of circles (called \emph{fibers}), where each fiber has a regular neighborhood homeomorphic to a fibered solid torus, i.e., the fiber becomes the central fiber of the fibered solid torus.  Viewing the neighborhood this way, if $q=1$, we say the fiber is \emph{ordinary}.  If $q\geq2$, we say the fiber is \emph{exceptional with multiplicity} $q$.  In the latter case, the fibers surrounding the central fiber are called $(p,q)$-\emph{curves}.  

If $M$ is a Seifert fibered space, there is a natural projection $\pi:M\to\Sigma$ that identifies each fiber to a point.  The surface $\Sigma$ is called the base space.  We can record the exceptional fiber information in the form of cone points on $\Sigma$, so $M$ is a circle bundle over the resulting orbifold.  Another way to recover $M$ is to remove a disk neighborhood of each cone point on $\Sigma$ and cross the resulting surface with $S^1$.  The result is a manifold with torus boundary components.  If we choose meridian and longitude coordinates for each boundary component so that the projection of the meridians to the base surfaces is one-to-one onto the boundary of the removed disks and the longitude is $\star\times S^1$ in the circle product, then $M$ is the result of Dehn filling on the boundary components along the slopes $p'/q$, where $pp'\equiv 1\pmod q$.  If $M$ is a Seifert fibered space with base space $\Sigma$ and $n$ exceptional fibers with fibered solid torus neighborhoods consisting of $(p_i/q_i)$-curves for $i=1,2,\ldots, n$, we write $M=\Sigma(p'_1/q_1, \ldots, p'_n/q_n)$, or sometimes $M=\Sigma(q_1,\ldots, q_n)$.  In fact, the homeomorphism type of $M$ is determined by $\Sigma$ and the \emph{Seifert invariants}: $\{p'_1/q_1, \ldots, p'_n/q_n\}$, up to permutation, and up to the relation $\{p'_1/q_1, p'_2/q_2, \ldots, p'_n/q_n\} = \{p'_1/q_1\pm 1, p'_2/q_2\mp 1, \ldots, p'_n/q_n\}$.  In other words, $\sum_{i=1}^np'_i/q_i$ is an invariant of $M$.  Because of this, it is often useful to standardize the notation so that the Seifert invariants are all positive and less than one.  To do this, we subtract out the integer part of each fraction and collect them in a single term, $b$.  We write $M=\Sigma(b; p'_1/q_1, \ldots, p'_n/q_n)$, where $0<p'_i<q_i$ and $b\in \Z$.

A Seifert fibered space is called \emph{small} if the base space is a sphere and the number of exceptional fibers is at most three.

Next, we recall a fact about Dehn filling on Seifert fibered manifolds that will be useful  throughout this paper.  Let $M$ be a Seifert fibered manifold with a torus boundary component $T\subset \partial M$.  The fibering of $M$ induces a fibering of $T$, and the slope, $\gamma$, of the induced fibers on $T$ is called the \emph{Seifert slope} of $T$.  Now, the Seifert fibering of $M$ will extend to a Seifert fibering of $\alpha$-Dehn filling on $M$ provided that $\alpha\not=\gamma$.  In fact, we have the following.  See \cite{Heil} for a complete treatment of Dehn filling on Seifert fibered spaces with boundary.

\begin{lemma}\label{SFfillings}
If $M$ is a Seifert fibered manifold with base surface $\Sigma$ and $n$ exceptional fibers, $T\subset \partial M$ is a torus boundary component  (corresponding to a circle boundary component $C\subset\partial \Sigma$), and $\gamma$ is the Seifert slope $T$, then let $M(\alpha)$ denote $\alpha$-Dehn filling on $T$, let $d=\Delta(\alpha,\gamma)$, and let $\hat\Sigma = \Sigma\cup_CD^2$. Then,
\begin{enumerate}[(a)]
\item If $d\geq 2$, $M(\alpha)$ is a Seifert fibered space with base surface $\hat\Sigma$ and $n+1$ exceptional fibers (the original exceptional fibers, plus a new one of multiplicity $d$).
\item If $d=1$, $M(\alpha)$ is a Seifert fibered space with base surface $\hat\Sigma$ and (the original) $n$ exceptional fibers.
\item If $d=0$, $M(\alpha) = N\#L$, where $N$ is a Seifert fibered space with base surface $\hat\Sigma$ and (the original) $n$ exceptional fibers, and $L$ is a Lens space.
\end{enumerate}
\end{lemma}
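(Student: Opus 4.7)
My plan has three parts, one for each case, all hinging on how the Seifert fibration of $M$ interacts with the solid torus $V$ glued in by the Dehn filling (with meridian of $V$ identified to $\alpha$). For cases (a) and (b), where $d \geq 1$, I would extend the Seifert fibration across $V$. Choose a basis $\{\mu,\lambda\}$ for $H_1(T)$ with $\alpha = \mu$, so that the Seifert slope can be written $\gamma = p\mu + d\lambda$ for some integer $p$ with $\gcd(p,d)=1$. The fibration restricts to $T$ as a foliation by $\gamma$-curves; under the identification of $\mu$ with the meridian of $V$, the curve $\gamma$ appears on $\partial V$ as a $(p,d)$-torus curve. This is precisely the data needed to fiber $V$ as a fibered solid torus (in the sense defined in the preliminaries) with central fiber the core and invariants $(p,d)$. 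The two fibrations agree on $T$ and glue to a Seifert fibration of $M(\alpha)$ with base surface $\hat\Sigma = \Sigma \cup_C D^2$. The new central fiber has multiplicity $d$: ordinary when $d=1$, establishing case (b), and exceptional of multiplicity $d$ when $d \geq 2$, establishing case (a). The previously existing exceptional fibers of $M$ remain unchanged.

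For case (c), where $d = 0$ and $\alpha = \gamma$, the fibration cannot be extended across $V$: the meridian disk of $V$ would have its boundary equal to a fiber. I would reduce this to case (b) by choosing an auxiliary slope $\beta$ on $T$ with $\Delta(\beta, \gamma) = 1$, and applying (b) to produce $N = M(\beta)$ Seifert fibered over $\hat\Sigma$ with the original $n$ exceptional fibers. The core $J$ of that filling is an ordinary fiber of $N$, with fibered tubular neighborhood $N(J) \cong D^2 \times S^1$ whose $S^1$ factor is the fiber direction. Then $M(\alpha)$ is obtained from $N$ by drilling out $J$ and refilling via the flipped gluing that sends the new meridian to the fiber slope $\{\mathrm{pt}\} \times S^1$. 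To produce the connected-sum decomposition, I would locate an essential 2-sphere by combining a meridian disk of the new filling $V$ with a vertical annulus inside $M$ running between the boundary $T$ and a suitable ordinary fiber, so that the flipped framing forces a lens space summand to split off; the remaining summand inherits a Seifert fibration over $\hat\Sigma$ with the same $n$ exceptional fibers.

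The main obstacle will be the framing bookkeeping in case (c). Identifying the lens space $L$ precisely requires carefully tracking the twist between the original meridian of $M$ and the auxiliary slope $\beta$. If the direct geometric construction becomes cumbersome, the cleanest route is to appeal to the comprehensive analysis of Dehn filling on Seifert fibered spaces in \cite{Heil}, already cited in the statement, whose conclusions this lemma summarizes for our purposes.
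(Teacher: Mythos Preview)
The paper does not prove this lemma at all; it is stated as a standard fact about Dehn filling on Seifert fibered spaces, with a pointer to \cite{Heil} for a full treatment. So your proposal already goes further than the paper.

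Your arguments for (a) and (b) are correct and standard: writing $\gamma = p\mu + d\lambda$ in a basis with $\alpha = \mu$ and extending over $V$ as a fibered solid torus of multiplicity $d$ is exactly the right picture.

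For (c), your setup via an auxiliary slope $\beta$ with $\Delta(\beta,\gamma)=1$ is fine, but the sphere construction as stated does not close up. A vertical annulus from $T$ to an interior ordinary fiber, capped on the $T$ side by a meridian disk of $V$, produces a \emph{disk} whose boundary is that fiber, not a $2$--sphere. The standard fix is to take a vertical annulus $A$ in a collar of $T$ whose \emph{both} boundary circles lie on $T$ (each a copy of $\gamma$); capping each with a meridian disk of $V$ yields an embedded $S^2$, and tracking the two sides gives the $N\#L$ decomposition. Since you already flag the bookkeeping as the obstacle and propose falling back on \cite{Heil}, you land in the same place the paper does.
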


As an example, consider $D^2(a,b)$ with Seifert slope $r/s$, and let $d=\Delta(m/l,r/s)= |ms-lr|$.  Then (as developed in \cite{gordon_survey}),

$$D^2(a,b)(m/l) = \begin{cases} 
S^2(a,b,d) & \text{ if } d\geq 2 \\
L(m, lb^2) & \text{ if } d=1 \\
L(a,b)\#L(b,a) & \text{ if } d=0
\end{cases}$$

\subsection{Montesinos knots}

A \emph{tangle} is a pair $(B,A)$, where $B\cong B^3$ and $A$ is a pair of properly embedded arcs in $B$.  A \emph{marked tangle} is a tangle along with an identification of its boundary $\partial(B,A) = (S, S\cap A)$, which is a 2-sphere with 4 distinguished points, with the pair $(S^2, \{NE, NW, SW, SE\})$.  The \emph{trivial tangle} is the tangle which is homeomorphic as a marked tangle to $(D^2,\{2\  points\})\times I$. Let $h$ and $r$ be the tangle operations where $h$ adds a positive horizontal half-twist (right-handed), and $r$ is reflection in the $(NW/SE)$-plane.

 Let $[c_1, c_2,\ldots, c_m]$ be a sequence of integers, and let $p/q = \frac{1}{c_1 + \frac{1}{c_2 + \frac{1}{\cdots + c_m}}}$.  The \emph{rational tangle}, $\mathcal R(p/q)$ is formed by applying the operation $(h^{c_m}r)(h^{c_{m-1}}r)\cdots(h^{c_1}r)$ to the trivial tangle, which we denote $\mathcal R(1/0)$.  Note that, as an unmarked tangle, $\mathcal R(p/q)$ is trivial, one can just untwist it.  On the other hand, Conway showed \cite{conway} that, as marked tangles, $\mathcal R(p/q)=\mathcal R(p'/q')$ if and only if $p/q=p'/q'$.
 
 A \emph{Montesinos link of length $n$} is a link formed by connecting $n$ rational tangles to each other in a standard fashion.  We denote such a knot by $K[p_1/q_1, \ldots, p_n/q_n]$ (see Figure \ref{figure:MontesinosLinks}).  In the special case where each $p_i = \pm1$, we have what is called a \emph{pretzel knot}.  In this case, each tangle is just a strand of vertical twists, since $1/q$ has the continued fraction expansion $[q]$.  It is easy to see that Montesinos links of length one or two are the same.  These links are called \emph{$2$-bridge links}, and will be denoted $K[p/q]$, where $p/q$ is the rational number describing the tangle twists.

\begin{figure}\label{figure:MontesinosLinks}
\centering
\includegraphics[scale = .9]{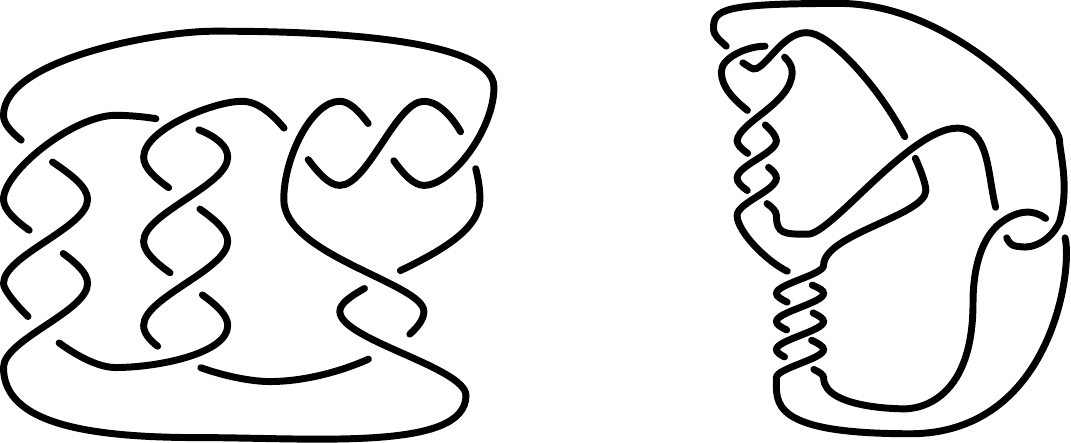}
\caption{Above we have the Montesinos knot $K[1/3, 1/4, -3/5]$ and the 2-bridge knot $K[43/95]$ (with continued fraction $[2,4,1,3,2]$).}
\label{figure:MontesinosLinks}
\end{figure}

Montesinos links of length three are determined up to the same relations as Seifert fibered spaces, but when $n>3$, the cyclic order of the strands also matters.  In either case, we can normalize the invariants and write $K[b;p_1/q_1, \ldots, p_n/q_n]$ where $0<p_i<q_i$ and $b\in\Z$.  In fact, we have the following proposition, which follows from Theorem \ref{thm:Montesinos} below.

\begin{proposition}
The double cover of $S^3$, branched along the Montesinos link $K[p_1/q_1, \ldots, p_n/q_n]$, is the Seifert fibered space $S^2(p_1/q_1, \ldots, p_n/q_n)$.
\end{proposition}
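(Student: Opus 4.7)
This is a classical result due to Montesinos. The plan is to take the branched double cover piece-by-piece, analyzing first the rational tangles separately and then the joining skeleton of the Montesinos link, before assembling everything with Lemma \ref{SFfillings}.

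I would begin from the foundational fact that the double cover of a 4-punctured sphere is a torus; consequently, the branched double cover of any 2-arc tangle $(B^3, A)$ is a 3-manifold with a single torus boundary component. For the trivial tangle $\mathcal{R}(1/0)$, an explicit model shows that this cover is a solid torus $V_0$, and the standard marking on the 4 boundary points induces a canonical meridian-longitude basis on $\partial V_0$. I would then examine how the generating tangle operations $h$ (horizontal half-twist) and $r$ (reflection in the $NW/SE$-plane) lift to the cover: each half-twist $h$ lifts to a full Dehn twist on $\partial V_0$ along a specific essential curve, and $r$ lifts to a well-understood involution. By induction on the length of the continued fraction expansion, the word $(h^{c_m}r)\cdots(h^{c_1}r)$ defining $\mathcal{R}(p/q)$ acts on the boundary torus by the element of $PSL(2,\mathbb{Z})$ sending the slope $1/0$ to the slope $p/q$. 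Hence, the branched double cover of $\mathcal{R}(p/q)$ is again a solid torus whose meridian disk bounds a curve of slope $p/q$ in the standard coordinates on the boundary torus.

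Next, I would decompose $S^3$ along the $n$ boundary spheres of the tangle balls in $K[p_1/q_1,\ldots,p_n/q_n]$. The complement of these balls contains the outside arcs of the link joining one tangle to the next in the standard Montesinos pattern. A direct check shows that the branched double cover $N$ of this joining region is a trivial circle bundle over an $n$-punctured 2-disk, i.e., a Seifert fibered space over a planar surface with no exceptional fibers. Each of the $n$ boundary tori of $N$ inherits a preferred meridian-longitude basis from the marking of the corresponding tangle boundary, and the regular fiber of $N$ runs along the longitude direction in these coordinates. Reassembling, the branched double cover of $(S^3, K[p_1/q_1,\ldots,p_n/q_n])$ is obtained by gluing the $n$ solid tori (the covers of the rational tangles) to $N$ along the matching boundary tori, where by the slope computation above the meridian of the $i$-th solid torus is identified with the slope $p_i/q_i$. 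Applying Lemma \ref{SFfillings} to each of these $n$ fillings extends the Seifert fibering of $N$ over the capped-off base surface $S^2$, introducing one exceptional fiber of invariant $p_i/q_i$ for each $i$ and yielding $S^2(p_1/q_1,\ldots,p_n/q_n)$ as claimed.

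The main technical burden is the coordinate bookkeeping connecting the marking on a rational tangle to the Seifert invariant convention recorded in Section \ref{section:preliminaries}; one must verify that no convention-induced reciprocal $q/p$ or sign error sneaks in when passing between the tangle setting and the Seifert-fibered setting. Once the identification of slopes is pinned down carefully in the base case $\mathcal{R}(1/0)$ and the effects of $h$ and $r$ are tracked, the result follows from purely local gluing considerations.
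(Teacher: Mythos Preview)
Your outline is correct and is precisely the classical Montesinos argument: decompose the pair $(S^3, K[p_1/q_1,\ldots,p_n/q_n])$ along the tangle spheres, identify the branched double cover of each rational tangle with a solid torus whose meridian acquires slope $p_i/q_i$, identify the branched double cover of the connecting region with a trivial $S^1$-bundle over a planar surface, and then invoke Lemma~\ref{SFfillings} to cap off. The only slip is terminological: the base of the circle bundle over the joining region should be an $n$-holed sphere (equivalently an $(n{-}1)$-punctured disk), not an ``$n$-punctured $2$-disk,'' since the cover has exactly $n$ torus boundary components, one for each tangle ball. This does not affect the argument.

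That said, you should be aware that the paper does not actually \emph{prove} this proposition. The sentence immediately preceding it reads ``we have the following proposition, which follows from Theorem~\ref{thm:Montesinos} below,'' and Theorem~\ref{thm:Montesinos} is simply the statement of the Montesinos trick, attributed to Montesinos's original paper \cite{montesinos_surgery}. In other words, the paper treats this as a known foundational fact and cites it; your write-up supplies the underlying argument that the paper takes for granted. So your approach is not so much \emph{different} from the paper's as it is \emph{more complete}: you have reconstructed Montesinos's argument where the paper is content to invoke it.
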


We remark that it is often helpful to allow $p_i/q_i$ to be zero, $\infty$, or 1 for some $i$, in either the notation for Montesinos links or Seifert fibered spaces.  For our purposes, this will only happen when the length $n$ is three or less, and the result should be clear from the context.  For example, $K[1/3, -1/2, 1/0]$ is the connected sum of a trefoil knot and a Hopf link, $K[1/3, 2/7, 0]=K[2/13]$, and $S^2(2,3,1)$ is a lens space.

\subsection{Seifert fibered surgery on knots with symmetries}\label{subsection:symmetries}

In this section, we recall some known results about Seifert fibered surgery on knots that admit a strong inversion, have period two, or both.  In what follows, let $K\subset S^3$ be a knot and let $\varphi:S^3\to S^3$ be a non-trivial orientation preserving involution such that $\varphi(K)=K$ and $C_\varphi = \text{Fix}(\varphi)\not=\emptyset$.  By the positive solution to the Smith conjecture, $C_\varphi$ is an unknotted circle in $S^3$ \cite{morgan-bass_smith}.

\begin{definition}  If $C_\varphi\cap K \not=\emptyset$, then $\varphi$ is called a \emph{strong inversion} of $K$ and $K$ is called \emph{strongly invertible}.  In this case, $C_\varphi\cap K =$ 2 points and $\varphi$ reverses the orientation of $K$.

If $C_\varphi\cap K= \emptyset$, then we say $\varphi$ is a \emph{cyclic symmetry of order 2} and that $K$ has \emph{period} $2$.
\end{definition}

In this paper, we will only be interested in strong inversions and cycles of period 2.  For a more general treatment of Dehn surgery on knots with symmetries, see \cite{motegi_symm}.

\begin{figure}
\centering
\includegraphics[scale = .6]{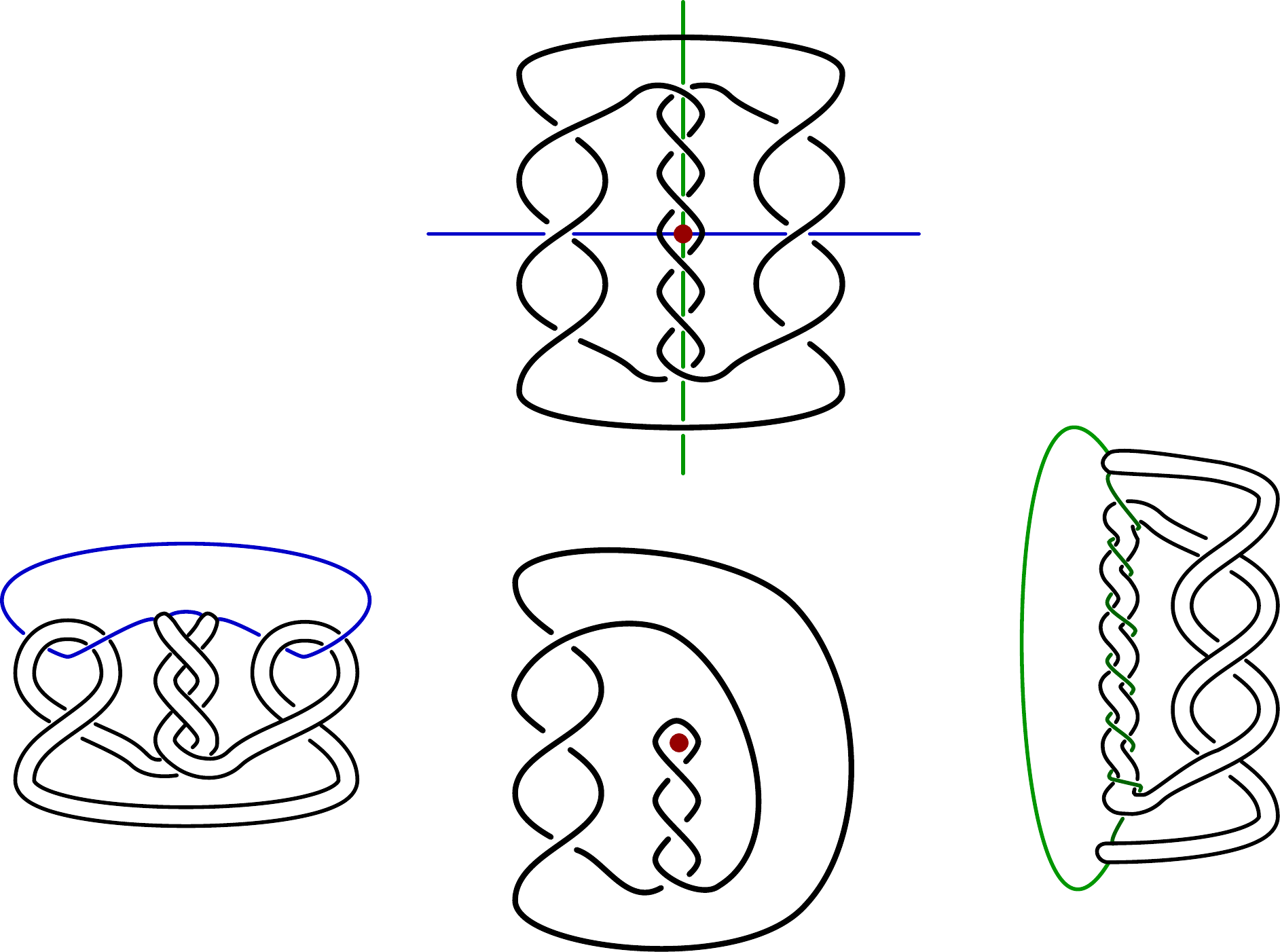}
\caption{The knot $P(3,3,-6)$, shown with its three symmetries and the resulting quotients.}
\label{figure:3Symmetries}
\end{figure}

First, let us consider strongly invertible knots.  Let $K\subset S^3$ be a knot with a strong inversion $\varphi$.  Then $\varphi$ restricts to an involution of the knot exterior, $M_K$, and the quotient of $M_K$ by the action of $\varphi$ is a tangle, $\mathcal T_K$. The well-known Montesinos trick gives a correspondence between Dehn filling on $M_K$ and rational tangle filling on $\mathcal T_K$.  For details, see \cite{gordon_survey}.  The following is originally due to Montesinos \cite{montesinos_surgery}.

\begin{theorem}\label{thm:Montesinos}
Let $\mathcal T$ be a marked tangle.  Then $\widetilde{\mathcal T}(r/s) \cong \widetilde{\mathcal T(-r/s)}$.
\end{theorem}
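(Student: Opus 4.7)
The plan is to build the required homeomorphism by understanding both sides as obtained from the same base manifold $\widetilde{\mathcal T}$ by attaching a solid torus, and then matching up the attaching slopes. First, I would verify that $\widetilde{\mathcal T}$ genuinely has torus boundary: the branch set of $\widetilde{\mathcal T}\to(B^3,A)$ restricts on $\partial B^3$ to the 4-punctured sphere $(S^2,\{NE,NW,SW,SE\})$, and the double cover of a sphere branched at four points is $T^2$. Moreover, the marking fixes a canonical preferred basis $(\mu,\lambda)\subset H_1(\partial\widetilde{\mathcal T})$, say by taking $\mu$ to be the lift of an arc joining $NE$ to $NW$ and $\lambda$ the lift of one joining $NW$ to $SW$, so slopes on $\partial\widetilde{\mathcal T}$ can be described as fractions $p/q$ in a way that depends only on the marking.

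Next, the operation $\mathcal T\rightsquigarrow \mathcal T(r/s)$ is the same as gluing the rational tangle $\mathcal R(r/s)$ to $\mathcal T$ along their shared boundary 4-punctured sphere. Covers pull back, so $\widetilde{\mathcal T(r/s)}=\widetilde{\mathcal T}\cup_{T^2}\widetilde{\mathcal R(r/s)}$. The standing lemma in this subject (and the content of the proof) is that $\widetilde{\mathcal R(r/s)}\cong S^1\times D^2$, with meridian of a precise slope on $T^2$ under the marking. My plan for this is inductive along the continued fraction expansion $r/s=[c_1,\ldots,c_m]$: the trivial tangle $\mathcal R(1/0)$ clearly lifts to a solid torus with meridian the standard $\mu$; one checks directly that $h$ (a positive horizontal half-twist) lifts to a Dehn twist of $T^2$ along $\mu$, and $r$ (reflection in the $NW$--$SE$ diagonal) lifts to a linear involution swapping $\mu$ and $\lambda$. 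Tracking the image of $\mu$ under the lifted word $(h^{c_m}r)\cdots(h^{c_1}r)$ then produces exactly the slope $-r/s$ (with the orientation convention adopted in the paper).

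Having established that the meridian of $\widetilde{\mathcal R(r/s)}$ realizes the slope $-r/s$ on $\partial\widetilde{\mathcal T}$, the gluing $\widetilde{\mathcal T}\cup_{T^2}\widetilde{\mathcal R(r/s)}$ is, by definition, Dehn filling on $\widetilde{\mathcal T}$ along the slope $-r/s$, i.e.\ it is $\widetilde{\mathcal T}(-r/s)$. Combining with the pull-back formula above yields the claimed homeomorphism $\widetilde{\mathcal T(r/s)}\cong\widetilde{\mathcal T}(-r/s)$.

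The genuine technical content, and the main obstacle, is the sign bookkeeping in the inductive step: the continued fraction recursion for slopes on $T^2$ under the lifts of $h$ and $r$ must be checked to land on $-r/s$ rather than $+r/s$ or some variant, and this depends delicately on the chosen orientations of $\mu$ and $\lambda$ and on the chirality of the half-twist $h$. I would spell this out by explicitly computing the matrices of the lifts of $h$ and $r$ in the chosen basis and verifying that iterating them on $[1:0]^T$ produces the continued fraction for $-r/s$; everything else in the argument is formal.
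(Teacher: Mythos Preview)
The paper does not actually prove this theorem; it simply states it as a known result and attributes it to Montesinos \cite{montesinos_surgery}, so there is no proof in the paper to compare against. Your sketch is essentially the standard argument for the Montesinos trick: the double branched cover of $(B^3,A)$ over a rational tangle is a solid torus, the continued fraction operations $h$ and $r$ lift to mapping classes of $T^2$ whose effect on slopes is tracked by the continued fraction recursion, and hence rational tangle filling downstairs corresponds to Dehn filling upstairs along the appropriate slope. This is exactly the content of Montesinos's original argument, and your identification of the sign bookkeeping as the only delicate point is accurate. One small comment: your proposal is a plan rather than a proof, and the inductive step (computing the explicit $\mathrm{SL}_2(\mathbb Z)$ matrices for the lifts of $h$ and $r$ and verifying the recursion lands on $-r/s$) is where all the work lies; if you actually carry this out, be careful that the paper's convention for $h$ is a \emph{positive horizontal} half-twist and that $r$ is reflection in the $NW/SE$ plane, since different sources use different conventions and this is precisely what determines the sign.
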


Let $L_{r/s} = \mathcal T_K(-{r/s})$, so $L_{r/s}$ is a knot or a two-component link in $S^3$ with $K({r/s})$ as the double cover of $S^3$, branched along $L_{r/s}$.  Suppose that $K({r/s})$ is a small Seifert fibered space.  Let $\bar\varphi:K({r/s})\to K({r/s})$ be the involution induced by extending $\varphi$ across the filling solid torus.  Then $K({r/s})/\bar\varphi = S^3$.

If $K$ is not a trefoil knot, then we can assume that $\bar\varphi$ is fiber-preserving \cite{motegi_symm}.  Let $\pi:K({r/s})\to S^2$ be the Seifert fibration of $K({r/s})$.  Let $C_{\bar\varphi} = \text{Fix}(\bar\varphi)$.  If each component of $C_{\bar\varphi}$ is a fiber in $K({r/s})$, then $K({r/s})\backslash C_{\bar\varphi}$ admits a Seifert fibered structure.  Since this structure is compatible with $\bar\varphi$, $S^3\backslash L_{r/s}$ admits a Seifert fibered structure.  In other words, $L_{r/s}$ is a \emph{Seifert link}.

Let $\hat\varphi:S^2\to S^2$ be the induced involution of the base orbifold.  If one component of $C_{\bar\varphi}$ is not a fiber in $K({r/s})$, then $\hat\varphi$ is reflection across the equatorial circle, $C_{\hat\varphi}$, of $S^2$ and all of the cone points lie on $C_{\hat\varphi}$ \cite{motegi_symm}.  In this case, $L_{r/s} = C_{\bar\varphi}/\bar\varphi$ is a length three Montesinos link \cite{miyazaki-motegi_Seifert}.  So, we have the following, as stated in \cite{ichi-jong_pqq}.

\begin{proposition}\label{prop:seifertormont}
Let $K$ be a strongly invertible hyperbolic knot, and let $r/s\in \Q$.  Let $L_{r/s}$ be the link obtained by applying the Montesinos trick to $K(r/s)$.  If $K(r/s)$ is a small Seifert fibered space with base orbifold $S^2$, then $L_{r/s}$ is either a Seifert link or a Montesinos link.
\end{proposition}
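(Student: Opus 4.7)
The plan is to organize the argument along the lines already sketched in the paragraphs preceding the proposition, producing an induced involution on $K(r/s)$, making it fiber-preserving, and then splitting into cases according to how the fixed set meets the Seifert fibration. First I would extend the strong inversion $\varphi:S^3\to S^3$ across the filling solid torus of the Montesinos trick to obtain an involution $\bar\varphi:K(r/s)\to K(r/s)$ whose quotient is $S^3$, branched along $L_{r/s}$. Since $K$ is hyperbolic, in particular it is not a trefoil knot, so by Motegi \cite{motegi_symm} the involution $\bar\varphi$ can be isotoped to be fiber-preserving with respect to the given Seifert fibration $\pi:K(r/s)\to S^2$, and it induces an involution $\hat\varphi:S^2\to S^2$ of the base orbifold.

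The argument then splits on the behavior of the fixed set $C_{\bar\varphi}=\mathrm{Fix}(\bar\varphi)$, which by the Smith conjecture consists of unknotted circles. In the first case, every component of $C_{\bar\varphi}$ is a fiber of $\pi$. Then removing $C_{\bar\varphi}$ yields a Seifert fibered open manifold on which $\bar\varphi$ acts freely and fiber-preservingly, so the Seifert structure descends to $S^3\setminus L_{r/s}$; hence $L_{r/s}$ is a Seifert link. In the second case, some component of $C_{\bar\varphi}$ is not a fiber. Then, as recorded in \cite{motegi_symm}, the induced involution $\hat\varphi$ must be a reflection of $S^2$ across an equatorial circle $C_{\hat\varphi}$, and every cone point of the base orbifold lies on $C_{\hat\varphi}$. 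Invoking the classification of Miyazaki--Motegi \cite{miyazaki-motegi_Seifert}, the quotient $L_{r/s}=C_{\bar\varphi}/\bar\varphi$ is then a length three Montesinos link, as the tangle decomposition read off from the cone points along $C_{\hat\varphi}$ is precisely that of a Montesinos link.

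The main obstacle is simply assembling the right input from \cite{motegi_symm} and \cite{miyazaki-motegi_Seifert} and verifying that their hypotheses are met in the present setting; in particular, one must know that a fiber-preserving involution of a small Seifert fibered space whose quotient is $S^3$ is rigid enough that the two cases above exhaust all possibilities. Once these ingredients are quoted, no further calculation is needed, because the case analysis on $C_{\bar\varphi}$ directly produces the two alternatives stated in the proposition.
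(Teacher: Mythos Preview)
Your proposal is correct and follows essentially the same approach as the paper; indeed, the paper does not give a separate formal proof of this proposition but simply records it as a summary of the discussion in the preceding paragraphs (with attribution to \cite{ichi-jong_pqq}), and your write-up is a faithful organization of that discussion. One small quibble: the Smith conjecture applies to involutions of $S^3$, not of $K(r/s)$, so your parenthetical about $C_{\bar\varphi}$ consisting of unknotted circles is misplaced---but it is also unnecessary, since nothing in the argument uses it.
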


Seifert links are well understood \cite{burde-murasugi_links, eisenbud-neumann_links}.  In the present paper, we will only be concerned with Seifert knots and Seifert links with two components, in which case we have the following.

\begin{lemma}
Let $L\subset S^3$ be a Seifert link with at most two components.  Then $L$ is equivalent to one of the following:
\begin{enumerate}[(a)]
\item A torus knot
\item A two-component torus link
\item A two-component link consisting of a torus knot together with a core curve of the torus on which it lies.
\end{enumerate}
\end{lemma}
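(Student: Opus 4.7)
The plan is to reduce the statement to the classification of Seifert fibrations of $S^3$ itself. Specifically, the first step is to show that any Seifert link $L \subset S^3$ is isotopic to a union of fibers of some Seifert fibration of $S^3$. Granted this reduction, the classification of Seifert fibrations of $S^3$ --- each has base orbifold $S^2$ with at most two cone points of coprime multiplicities $p$ and $q$, whose exceptional fibers are the two cores of a genus-one Heegaard splitting and whose generic fibers are $(p,q)$-torus knots on parallel Heegaard tori --- enumerates the possibilities directly.

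To carry out the reduction, let $M = S^3 \setminus N(L)$ carry a Seifert fibration, and let $\gamma_i$ denote the Seifert slope on each boundary torus $T_i = \partial N(L_i)$. If $\gamma_i \neq \mu_i$ for every $i$, then Lemma \ref{SFfillings}(a)(b) shows that the fibration extends across each filling solid torus, producing a Seifert fibration of $S^3$ in which each $L_i$ appears as a fiber (exceptional of multiplicity $\Delta(\mu_i,\gamma_i)$, or ordinary when this distance equals $1$). If instead $\gamma_i = \mu_i$ for some $i$, then filling $T_i$ by $\mu_i$ yields a connected sum with a non-trivial lens space by Lemma \ref{SFfillings}(c); this lens-space summand survives the iterated meridional fillings at the remaining tori, so the final manifold after filling every $T_j$ with $j \neq i$ is reducible. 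But that manifold is the knot exterior $S^3 \setminus N(L_i)$, which is irreducible --- a contradiction.

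With $L$ identified as a union of fibers of a Seifert fibration of $S^3$, the enumeration is a short case analysis. For $|L| = 1$, $L$ is a single fiber, hence a torus knot (the unknot counted as a degenerate case), giving (a). For $|L| = 2$, the two fibers may be both exceptional, one of each, or both generic. Both exceptional gives the two cores of the Heegaard splitting, namely the Hopf link $T(2,2)$, a two-component torus link. One exceptional and one generic gives a $(p,q)$-torus knot together with a core of the Heegaard torus on which it lies, matching (c). Both generic gives two parallel $(p,q)$-torus knots, which after isotoping onto a single Heegaard torus form the two-component torus link $T(2p, 2q)$, matching (b).

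The step I expect to require the most care is the reduction, and in particular the claim that the essential summand introduced by a Seifert-slope filling cannot be destroyed by further meridional filling at disjoint boundary tori. A cleaner alternative is simply to appeal to the classification of Seifert links due to Burde--Murasugi and Eisenbud--Neumann, cited immediately before the lemma, from which the result follows by direct inspection of the one- and two-component entries.
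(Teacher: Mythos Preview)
The paper does not prove this lemma; it is stated as a consequence of the Burde--Murasugi and Eisenbud--Neumann classifications cited immediately before. Your fallback at the end is therefore exactly what the paper does, and your case analysis once $L$ is realized as a union of fibers of a Seifert fibration of $S^3$ is correct.

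Your direct reduction, however, has a real gap. Lemma~\ref{SFfillings}(c) does not guarantee that the lens-space summand is non-trivial, and in the present setting it need not be. Take $M = T^2 \times I$, the Hopf-link exterior: it is Seifert fibered over the annulus with no exceptional fibers, and one may choose the fibration so that $\gamma_1 = \mu_1$. Filling $T_1$ along $\mu_1$ then yields a solid torus, so the putative lens-space summand is $S^3$ and no contradiction arises. (Your exposition of this step is also tangled: having first filled $T_i$ and then all remaining $T_j$, the result is $S^3$, not the exterior $S^3\setminus N(L_i)$; presumably you meant to fill only $T_i$ and observe that the remaining link exterior is irreducible.) The Hopf link is of course the torus link $T(2,2)$, so the conclusion still holds there --- but your argument does not establish it. A correct fix treats $T^2 \times I$ separately and then checks that for every other Seifert-fibered link exterior with at most two boundary tori the fiber slope is never meridional; this is true but requires more than an appeal to Lemma~\ref{SFfillings}(c).
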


Note that, in particular, every component of a Seifert link is a torus knot or an unknot.

Now let $K\subset S^3$ be a knot with a cycle symmetry $\varphi$ of order 2.  Suppose that $K(r/s)$ is a Seifert fibered space with base surface $S^2$, and let $\bar\varphi$ be the extension of $\varphi|_{M_K}$ to $K(r/s)$.  Then, $K(r/s)$ has a $\bar\varphi$-invariant Seifert fibered structure \cite{miyazaki-motegi_Seifert}.  Let $C_{\bar\varphi} = \text{Fix}(\bar\varphi)$, and let $L_{r/s} = C_{\bar\varphi}/\bar\varphi$.

If $r$ is odd, then $L_{r/s}$ is a knot.  If $r$ is even, then $L_{r/s}$ is a link.  Let $K_\varphi= K/\varphi$.  $K_\varphi$ is called the \emph{factor knot of $K$} (with respect to $\varphi$), and let $C_\varphi = \text{Fix}(\varphi)$.  In the case where  $r$ is odd, we can view $L_{r/s}$ as the image of $C_\varphi/\varphi$ after $r/2s$ surgery on $K_\varphi$, so $L_{r/s}$ is a knot in $K_\varphi(r/2s)$.  If $r$ is even, then $L_{r/s}$ is the image of $C_\varphi/\varphi$ in $K_\varphi(r/2s)$ together with the core of the surgery torus, so $L_{r/s}$ is a link in $K_\varphi(r/2s)$.

Let $\pi:K(r/s)\to S^2$ be a Seifert fibering of $K(r/s)$, and let $\hat\varphi$ be the induced involution of $S^2$, with fixed point set $C_{\hat\varphi}$.  In \cite{miyazaki-motegi_Seifert}, it is shown that if $K$ is not a torus knot or a cable of a torus knot, then no component of $C_{\bar\varphi}$ is a fiber in $K(r/s)$ and $C_{\hat\varphi}$ is the equatorial circle in $S^2$.    This implies that $\hat\varphi$ is reflection across the equator.  Since $\bar\varphi$ is fiber preserving, $\hat\varphi$ must pair up cone points in the northern hemisphere with cone points in the southern hemisphere.  Let $k$ denote the number of cone points in the northern hemisphere.  For our purposes, $k=0$ or $k=1$.  From \cite{miyazaki-motegi_Seifert}, we have the following:

\begin{lemma}
\begin{enumerate}
\item  If $k=0$, then $K_\varphi(r/2s) = K(r/s)/\bar\varphi \cong S^3$.
\item  If $k=1$, then $K_\varphi(r/2s) = K(r/s)/\bar\varphi$ is a lens space..
\end{enumerate}
\end{lemma}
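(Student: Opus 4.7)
The proof strategy is two-fold: first identify the quotient $K(r/s)/\bar\varphi$ as a Dehn surgery on the factor knot $K_\varphi$, and then use the induced Seifert structure on the quotient to show that this surgery is $S^3$ if $k=0$ and a lens space if $k=1$.

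For the first identification, I would argue via covering space theory. Since $C_\varphi$ is disjoint from $K$, the restriction of $\varphi$ to a regular neighborhood $N(K)$ is a free involution, and the quotient $N(K)\to N(K_\varphi)$ is an honest unbranched double cover of solid tori which doubles in the longitudinal direction. Consequently the induced map on boundary tori sends $\mu_K$ to $\mu_{K_\varphi}$ and $\lambda_K$ to $2\lambda_{K_\varphi}$, so the slope $r\mu_K+s\lambda_K$ descends to $r\mu_{K_\varphi}+2s\lambda_{K_\varphi}$. Extending $\varphi$ to $\bar\varphi$ across the filling solid torus yields the identification $K(r/s)/\bar\varphi = K_\varphi(r/2s)$.

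For the second step, I would use the equivariant Seifert structure guaranteed by \cite{miyazaki-motegi_Seifert}. Decompose $S^2 = H_+\cup A\cup H_-$, where $H_\pm$ are the closed hemispheres and $A$ is a $\hat\varphi$-invariant annular neighborhood of the equator $C_{\hat\varphi}$. Since $\bar\varphi$ swaps $\pi^{-1}(H_+)$ with $\pi^{-1}(H_-)$ and acts fiber-preservingly on $\pi^{-1}(A)$ by reflecting each preserved fiber with two fixed points (as no fiber of $K(r/s)$ lies in $C_{\bar\varphi}$), the quotient is recovered as $\pi^{-1}(H_+)$ glued to $\pi^{-1}(A)/\bar\varphi$ along a common torus. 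A local analysis at each fiber above $C_{\hat\varphi}$ shows that $\pi^{-1}(A)/\bar\varphi$ is itself a solid torus; the cone points on $C_{\hat\varphi}$ appear in the quotient base orbifold $D^2=S^2/\hat\varphi$ as corner reflectors on $\partial D^2$, while the $k$ cone points off the equator become interior cone points of $D^2$.

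The conclusion follows from this two-solid-torus gluing, which produces a closed manifold of Heegaard genus at most one. If $k=0$, then $\pi^{-1}(H_+)$ is a solid torus with no exceptional fiber, and the Seifert invariants from the equatorial cone points enter symmetrically on both sides of the gluing to force the quotient to be $S^3$. If $k=1$, then $\pi^{-1}(H_+)$ is a fibered solid torus with a single exceptional fiber, and the same gluing produces a lens space whose Seifert invariants are inherited from the unique interior cone point of $D^2$. The principal technical obstacle is the local computation identifying $\pi^{-1}(A)/\bar\varphi$ as a solid torus and pinning down the gluing slope precisely enough to distinguish $S^3$ or a lens space from a more general genus-one Seifert fibered space; this careful bookkeeping is the technical heart of the argument carried out in \cite{miyazaki-motegi_Seifert}.
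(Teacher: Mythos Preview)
Your proposal is essentially correct and, in fact, goes well beyond what the paper itself does: the paper does not prove this lemma at all but simply quotes it from \cite{miyazaki-motegi_Seifert}. Your outline---identifying $K(r/s)/\bar\varphi$ with $K_\varphi(r/2s)$ via the covering $N(K)\to N(K_\varphi)$, and then exhibiting a genus-one Heegaard splitting of the quotient by cutting along the equatorial annulus---is precisely the strategy of Miyazaki--Motegi, so your approach and the paper's (by reference) coincide.

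Two small remarks. First, your slope computation $p_*(\mu_K)=\mu_{K_\varphi}$, $p_*(\lambda_K)=2\lambda_{K_\varphi}$ is correct, but it is worth noting that the second equality requires the \emph{Seifert} longitude specifically (not just any longitude from the local solid-torus model); this follows because $p_*[\lambda_K]=0$ in $H_1(M_{K_\varphi})$ forces it to be a multiple of $[\lambda_{K_\varphi}]$, and the degree of the boundary cover pins down the coefficient $2$. Second, your claim that for $k=0$ the equatorial Seifert invariants ``enter symmetrically on both sides of the gluing to force the quotient to be $S^3$'' is the one genuinely delicate point. Heegaard genus $\leq 1$ alone only gives $S^3$, $S^2\times S^1$, or a lens space, and distinguishing $S^3$ in the $k=0$ case requires the explicit gluing computation (equivalently, recognizing that $\bar\varphi$ in this case is exactly the Montesinos involution on $S^2(a,b,c)$, whose quotient is $S^3$ with branch set a Montesinos link). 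You rightly flag this as the technical heart deferred to \cite{miyazaki-motegi_Seifert}, so the sketch stands as written.
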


\begin{figure}
\centering
\includegraphics[scale = .5]{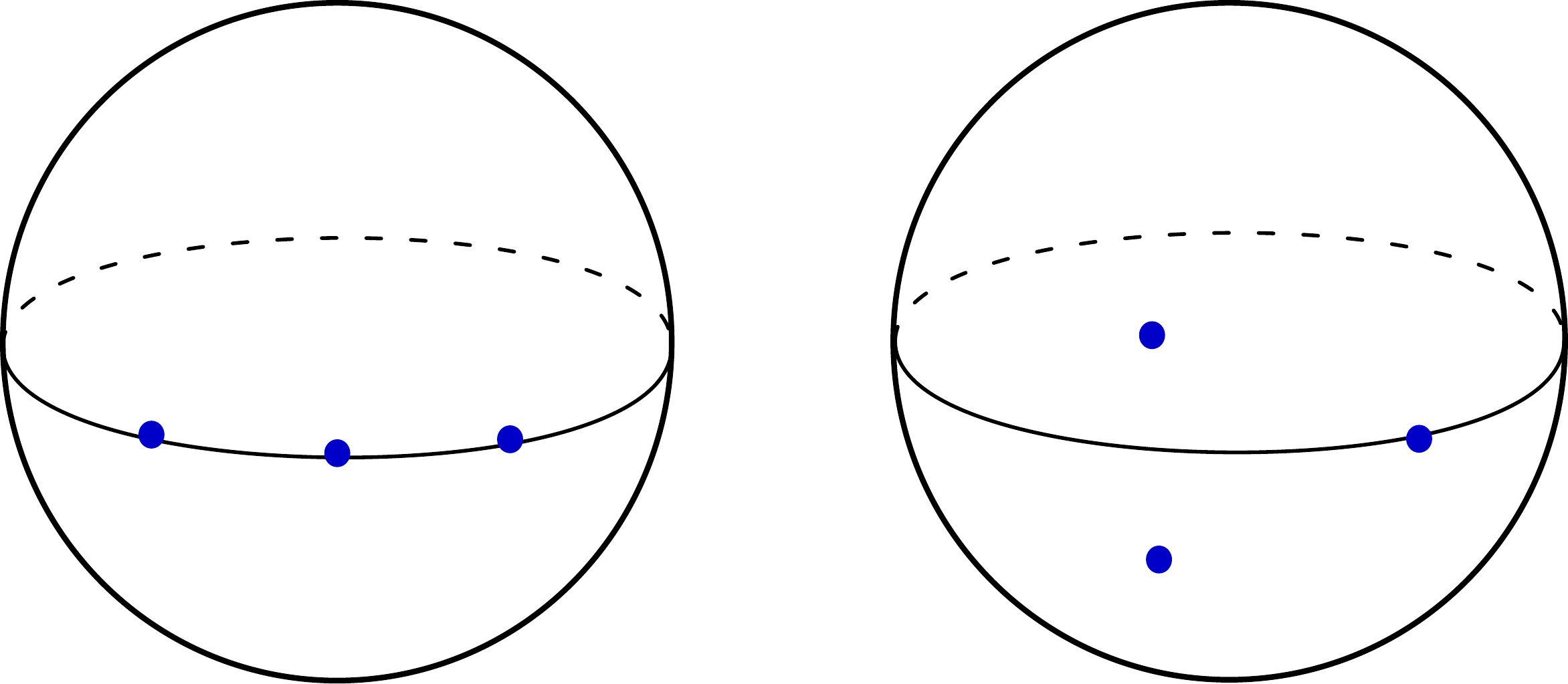}
\put(7,70){\large$C_{\hat\varphi}$}
\put(-185,70){\large$C_{\hat\varphi}$}
\caption{Possible configurations of cone points in the base sphere of a small Seifert fibered space}
\end{figure}

Note that $S^3$ and $S^2\times S^1$ are not lens spaces.  These facts can be helpful in obstructing Seifert fibered surgeries, based on the knot type of $K_\varphi$.  Throughout, $U$ will represent the unknot.

\begin{corollary}\label{corollary:FactorKnotType}
Let $K\subset S^3$ be a period 2 hyperbolic knot with factor knot $K_\varphi$.  Suppose $K(r/s)$ is a small Seifert fibered space.
\begin{enumerate}
\item  If $K_\varphi = T_{p,q}$ and $r$ is even, then $\Delta(pq, r/2s) = 1$, so $|r-2spq|=2$.
\item  If $K_\varphi = T_{p,q}$ and $r$ is odd, then $\Delta(pq, r/2s) = 1$, so $|r-2spq|=1$.
\item If $K_\varphi=U$ and $k=0$, then  $|r|\leq 2$.
\item If $K_\varphi= U$ and $k=1$, then $|r|\geq 3$.
\end{enumerate}
\end{corollary}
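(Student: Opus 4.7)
The plan is to combine the preceding lemma, which states that $K_\varphi(r/2s)$ is either $S^3$ (when $k=0$) or a lens space (when $k=1$), with the standard surgery classifications for torus knots and for the unknot. In each case the slope $r/2s$ must be reduced to lowest terms before extracting the integer relation, and the reduction depends on the parity of $r$: since $\gcd(r,s)=1$, the fraction $r/2s$ is already in lowest terms when $r$ is odd, while for $r$ even we necessarily have $s$ odd and $r/(2s)$ reduces to $(r/2)/s$.

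For parts (1) and (2), the factor knot $K_\varphi = T_{p,q}$ is a nontrivial torus knot. Moser's surgery theorem rules out $S^3$ as a nontrivial surgery on $T_{p,q}$ and identifies lens-space surgeries as precisely those at distance $1$ from the cabling slope $pq$. Hence, regardless of the value of $k$, the surgery $K_\varphi(r/2s)$ must be a lens space, so $\Delta(pq, r/2s) = 1$. Evaluating this distance via the reductions just mentioned gives $|r - 2spq|=1$ when $r$ is odd and $|r/2 - spq|=1$, i.e.\ $|r - 2spq|=2$, when $r$ is even, which is exactly the content of (2) and (1) respectively.

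For parts (3) and (4), the factor knot is the unknot $U$ and $U(m/n) = L(m,n)$ under the usual conventions $L(\pm 1,n)=S^3$ and $L(0,1)=S^2\times S^1$. If $k=0$ then $K_\varphi(r/2s) = S^3$, so the reduced numerator of $r/2s$ must be $\pm 1$, giving $|r|=1$ when $r$ is odd and $|r|=2$ when $r$ is even; either way $|r|\leq 2$. If $k=1$ then $K_\varphi(r/2s)$ is a lens space distinct from $S^3$, forcing the reduced numerator to have absolute value at least $2$; the odd case gives $|r|\geq 3$ immediately, and the even case gives $|r|\geq 4$ once one excludes $|r|=2$ (which would yield $S^3$) and $r=0$ (which would yield $S^2\times S^1$, not a lens space), so in every case $|r|\geq 3$.

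The proof is essentially bookkeeping: there is no substantive obstacle beyond tracking how $r/2s$ reduces to lowest terms as a function of the parity of $r$, after which each of the four conclusions falls out of a single invocation of either Moser's classification or the description of surgery on the unknot.
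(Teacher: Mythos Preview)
Your proof is correct and follows the same approach as the paper. The paper's own proof is a terse two-liner that simply invokes the preceding lemma and the well-known classification of surgeries on $U$ and $T_{p,q}$; you have filled in precisely the bookkeeping (the reduction of $r/2s$ to lowest terms according to the parity of $r$, and the explicit use of Moser's classification and the surgery description of the unknot) that the paper leaves to the reader.
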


\begin{proof}
If $K(r/s)$ is a small Seifert fibered space, then $K_\varphi(r/2s)$ is a lens space if $k=1$ and $S^3$ if $k=0$.  Such surgeries on $U$ and $T_{p,q}$ are well understood (see \cite{gordon_survey}).
\end{proof}

\subsection{Some exceptional Dehn surgery results}

There are many important results in the study of exceptional Dehn surgery that give limitations on which slopes can be exceptional for a hyperbolic knot $K$.  Below, we present some of the results that will be used in this paper.  First, we state an important result of Lackenby and Meyerhoff \cite{lack-meyer_max} that tells us that exceptional fillings are always ``close'' to each other.

\begin{theorem}\label{thm:lack-meyer}
Suppose $M$ is a hyperbolic manifold with torus boundary component $T\subset\partial M$ and that $\alpha$ and $\beta$ are exceptional filling slopes on $T$.  Then $\Delta(\alpha,\beta)\leq 8$.
\end{theorem}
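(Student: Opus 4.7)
The plan is to leverage the hyperbolic geometry at the torus cusp $T$ to convert bounds on slope \emph{length} into bounds on slope \emph{distance}. First I would put $M$ into its complete finite-volume hyperbolic structure and pick an embedded horoball neighborhood $H$ of the cusp corresponding to $T$, chosen maximal in the sense that $\partial H$ is tangent to itself. Every slope $\alpha$ on $T$ then has a well-defined Euclidean geodesic length $\ell(\alpha)$ on the flat torus $\partial H$. I would invoke the 6-theorem of Agol and Lackenby: if $\ell(\alpha)>6$, then $M(\alpha)$ is word-hyperbolic, irreducible, and atoroidal, hence (via geometrization, already in use in the paper) hyperbolic. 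Consequently every exceptional slope has length at most $6$ on $\partial H$.

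Next I would compare distance to length. Lifting two slopes $\alpha,\beta$ to primitive vectors in the deck lattice of the universal cover of $\partial H$, the geometric intersection number satisfies
\[
\Delta(\alpha,\beta)\cdot \mathrm{Area}(\partial H) \;=\; \ell(\alpha)\,\ell(\beta)\,\sin\theta(\alpha,\beta) \;\leq\; \ell(\alpha)\,\ell(\beta).
\]
If both slopes are exceptional, then $\Delta(\alpha,\beta) \leq 36/\mathrm{Area}(\partial H)$. Combining this with a universal lower bound on the area of a maximal cusp — for orientable hyperbolic $3$-manifolds one has the Cao--Meyerhoff bound giving $\mathrm{Area}(\partial H)\geq \sqrt{3}$ — already produces an explicit, though non-sharp, constant of the desired form.

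The hard part is sharpening the constant to $8$. This requires more than the crude inequality above: I would argue that the set of slopes of length $\leq 6$ forms a bounded subset of the lattice and that, once two short slopes are fixed, the remaining short slopes are highly constrained by the area identity. A careful combinatorial packing argument on $\partial H$ — keeping track of how many short slopes a lattice of fixed covolume can support, and how their pairwise angles must be distributed — reduces the problem to a compact parameter space of cusp geometries and slope configurations realizing $\Delta(\alpha,\beta)\geq 9$.

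The main obstacle is ruling out this finite residual parameter space. I expect the only realistic way, short of a new geometric insight, is a rigorous numerical enumeration over the possible cusp shapes, using interval arithmetic to certify that no exceptional pair achieves distance $9$ or more. This rigorous machine-checked step is the technical heart of the Lackenby--Meyerhoff argument, and it is where I would expect an independent approach to spend almost all of its effort.
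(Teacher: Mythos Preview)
The paper does not prove this statement at all: Theorem~\ref{thm:lack-meyer} is quoted from Lackenby and Meyerhoff \cite{lack-meyer_max} as background and used as a black box throughout, so there is no ``paper's own proof'' to compare against. What you have written is a faithful outline of the actual Lackenby--Meyerhoff argument itself (6-theorem $\Rightarrow$ exceptional slopes have length $\leq 6$, the area identity $\Delta(\alpha,\beta)\cdot\mathrm{Area}(\partial H)\leq \ell(\alpha)\ell(\beta)$, a universal cusp-area lower bound to get a finite constant, and then a rigorous computer search over the residual parameter space to push the bound down to $8$).

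Two small remarks on accuracy. First, the $\sqrt{3}$ cusp-area bound is Meyerhoff's older result; Cao--Meyerhoff is a volume bound, and in any case $\sqrt{3}$ only gives $\Delta\leq 20$, not close to $8$. Lackenby--Meyerhoff use a sharper area estimate (roughly $3.35$) to reach $\Delta\leq 10$ before the computer step. Second, your description of the endgame as ``combinatorial packing on $\partial H$'' undersells it: the actual argument tracks not just cusp shape but configurations of horoballs in the universal cover (via Mom technology and parent/child horoball relations), and the rigorous interval-arithmetic enumeration is substantial. Your proposal is a correct high-level plan, but you should be clear that you are sketching the original proof, not supplying an alternative---and that the paper under review never claims to reprove it.
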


The distance bound of 8 above can be improved if one specifies the type of space for each of $M(\alpha)$ and $M(\beta)$.  Let $S$ and $T$ represent the sets of reducible and toroidal manifolds, respectively.  Let $L$ represent the set of lens spaces.  Let $\Delta(C_1, C_2)$ represent the largest possible value of $\Delta(\alpha, \beta)$ such that there exists a hyperbolic manifold $M$ with $M(\alpha)$ a manifold of type $C_1$ and $M(\beta)$ a manifold of type $C_2$ (we will always consider manifolds with one boundary component, though the theory is more general).  The following table presents the known values of $\Delta(C_1, C_2)$.

$$\begin{array}{|c||c|c|c|c|}
\hline
    &  \ S\   & \  T\   & \  S^3\   & \  L\    \\
\hline
\hline
S  &  1  &  3  &  ?  &  1  \\
\hline
T  &  &  8  &  2   &  ?    \\
\hline
S^3  &  &  &  0  &  1    \\  
\hline
L  &   &  &  &  1      \\
\hline
\end{array}$$

Notice that in the case of $(S, S^3)$, this is equivalent to the cabling conjecture, and in the case of $(T,L)$, the bound is known to be either 3 or 4 \cite{lee:lens}.  For a more thorough discussion of these bounds, the manifolds achieving them, and precise references, see \cite{gordon_survey} and \cite{gordon:small}.

Suspiciously absent from the above table are bounds on the distance between a (non-lens space) small Seifert fibered surgery and the other types of exceptional surgeries.  These seem to be the most difficult cases to analyze, and, in particular, it is not known whether or not the distance 8 bound of Lackenby and Meyerhoff can be improved in most of the cases (though, see Theorems \ref{BGZ:weak} and \ref{thm:BGZgenusone} below).

The following is a consequence of Corollary 7.6, Proposition 14.1, and Proposition 16.1 in \cite{BGZ:toroidalSeifert}.

\begin{theorem}\label{BGZ:weak}
For any hyperbolic manifold $M$, if $M(\alpha)=A\cup_{T^2}B$ is toroidal with one of $A$ or $B$ non-Seifert fibered, then for any slope $\beta$ such that $M(\beta)$ is a Seifert fibered space, $\Delta(\alpha,\beta)\leq 6$.
\end{theorem}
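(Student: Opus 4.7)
The plan is to deploy the standard intersection-graph machinery of Gordon--Luecke type and then invoke the three cited structural results from [BGZ:toroidalSeifert]. First I would set up the combinatorics: let $T \subset M(\alpha)$ be an essential torus realizing the splitting $A \cup_{T^2} B$, and let $P = T \cap M$ be the punctured torus obtained by isotoping $T$ to intersect the core $K_\alpha$ of the $\alpha$-filling solid torus minimally, say in $n$ points; then $P$ is a properly embedded essential surface in $M$ with $n$ boundary components of slope $\alpha$ on $\partial M$. Choosing a meridional disk $D$ of the $\beta$-filling solid torus and placing $P$ and $D$ in general position, the arcs of $P \cap D$ define dual graphs $G_P \subset P$ and $G_D \subset D$ whose vertex degrees on $\partial P$ equal $\Delta(\alpha,\beta)$ and whose edge structure encodes the geometry of the two fillings.

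Next, I would pass to the case analysis carried out by the three cited BGZ results. Because $A$ is non-Seifert fibered, the characteristic submanifold decomposition of $A$ (and hence of the pair $(M(\alpha), T)$) is nontrivial, which restricts the combinatorics of $G_P$ near its boundary. Corollary~7.6 of [BGZ:toroidalSeifert] gives the generic distance bound when the JSJ/characteristic structure on the non-Seifert side is sufficiently rich; Proposition~14.1 handles the borderline case where one side of $T$ is a Seifert fibered submanifold with a specific boundary pattern that nearly, but not quite, extends across $T$; and Proposition~16.1 covers the remaining exceptional configurations, typically involving Klein bottle or small Seifert fibered pieces glued to the non-Seifert side. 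In each subcase the bound obtained is at most $6$, sharper than the generic bound of $8$ from Theorem~\ref{thm:lack-meyer}, because the non-Seifert side of $T$ forces extra essential annuli or tori that obstruct the high-distance configurations of $G_P$ and $G_D$.

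The main obstacle, and what is genuinely done in the cited paper, is the characteristic submanifold analysis justifying the improvement from $8$ down to $6$: one must show that a non-Seifert fibered side of the splitting torus always contains enough essential surfaces to rule out the extremal arc-configurations on $G_P$ that would be consistent with $\Delta(\alpha,\beta)=7$ or $8$. Once those three structural results are in hand, combining them to obtain the stated theorem is essentially bookkeeping: one checks that the three BGZ cases exhaust all possibilities allowed by the hypothesis that $A$ is non-Seifert fibered, and then takes the maximum of the three individual bounds, each of which is at most $6$.
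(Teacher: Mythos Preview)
The paper does not prove this theorem. It is stated immediately after the sentence ``The following is a consequence of Corollary~7.6, Proposition~14.1, and Proposition~16.1 in \cite{BGZ:toroidalSeifert}'' and is treated as a black-box input from the Boyer--Gordon--Zhang paper; no argument, not even a sketch, is given here. So there is nothing in the present paper against which to compare your proposal beyond the bare citation of those three results.

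Your proposal is therefore in the same spirit as the paper's treatment: you too ultimately defer to Corollary~7.6, Proposition~14.1, and Proposition~16.1 of \cite{BGZ:toroidalSeifert}. The surrounding narrative you supply---the intersection-graph setup with $G_P$ and $G_D$, and the characterizations of what each of the three cited results ``handles''---is plausible-sounding but speculative, since you have not consulted \cite{BGZ:toroidalSeifert}. Two specific cautions: first, taking $D$ to be a meridional disk of the $\beta$-solid torus gives a graph that carries no information about the Seifert fibered structure of $M(\beta)$; in arguments of this type one typically intersects $P$ with an essential or horizontal surface coming from the Seifert fibration, not with a bare meridian disk. Second, your descriptions of the content of Corollary~7.6, Proposition~14.1, and Proposition~16.1 are invented placeholders rather than verified statements, so the claim that they ``exhaust all possibilities'' and each yield a bound of at most $6$ is not something you have actually established. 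As written, your proposal is a heuristic gloss on a citation, not a proof; that is also all the paper provides.
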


Since many of the pretzel knots studied below are genus one, it will be helpful for us to have the following result, which gives particularly strong bounds on small Seifert fibered surgery slopes \cite{BGZ:genusone} of such knots.

\begin{theorem}\label{thm:BGZgenusone}
Let $K$ be a hyperbolic knot of genus one such that $K(0)$ is a non-Seifert fibered toroidal manifold.  If $K(\alpha)$ is a small Seifert fibered space for some $\alpha\in\Q$, then $\Delta(\alpha,0)\leq 3$.
\end{theorem}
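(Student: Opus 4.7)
The plan is to argue by contradiction: assume that $\Delta(\alpha,0) \geq 4$ and use the genus-one hypothesis to derive a contradiction. Setup: since $K$ has genus one, its minimal-genus Seifert surface $F \subset M_K$ is a once-punctured torus of boundary slope $0$, and it is essential in $M_K$; capping off in $K(0)$ produces the essential torus $\hat F \subset K(0)$ responsible for the toroidal structure. The hypothesis that $K(0)$ is non-Seifert fibered (not merely toroidal) means that either the pieces on the two sides of $\hat F$ in the JSJ decomposition are not both Seifert fibered, or they are but do not glue to a Seifert fibration of $K(0)$; this extra rigidity will be essential in extracting the contradiction.

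Next I would produce a properly embedded essential surface $P \subset M_K$ with $\partial P$ a collection of curves of slope $\alpha$. Such a surface exists whenever $\alpha$ is a boundary slope for $M_K$, and the Culler--Shalen norm machinery guarantees this for small Seifert fibered surgery slopes whose fundamental group is non-cyclic. Isotope $F$ and $P$ to minimize $|F \cap P|$, and pass to the pair of labelled intersection graphs $G_F \subset F$ and $G_P \subset P$. Each edge of $G_P$ contributes $\Delta = \Delta(\alpha,0)$ to the endpoint count on $\partial F$, and every class of mutually parallel arcs in $G_F$ is severely constrained by $\chi(F) = -1$. Assuming $\Delta \geq 4$ forces enough parallel edges in $G_F$ to produce either a Scharlemann cycle, yielding a reducing sphere in $K(0)$ and contradicting the irreducibility of $K(0)$ (since hyperbolic knots are non-cable, so $K(0)$ is irreducible by Gabai), or an extended Scharlemann cycle or ``great web,'' which forces the two sides of $\hat F$ to fit together as a Seifert fibration of $K(0)$, again contradicting the hypothesis.

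The main obstacle, as in all such bounds, is the case $\Delta = 4$. Theorem \ref{BGZ:weak} already delivers $\Delta \leq 6$, but the improvement from $6$ to $3$ genuinely requires that $F$ be a once-punctured torus: only at $\chi(F) = -1$ is the bookkeeping of parallel-edge classes tight enough to trap the combinatorics for $\Delta \in \{4,5,6\}$. For $\Delta = 5,6$ one can typically close the gap using the Culler--Shalen norm inequality $\|\alpha\| \geq s_0\,\Delta(\alpha,0)$, where $s_0$ is the minimal total norm and $\|\alpha\|$ is bounded by the inequalities known for small Seifert fibered surgery slopes together with the fact that $0$ is a norm-minimizing boundary slope (a consequence of the genus-one Seifert surface). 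The residual $\Delta = 4$ case requires a finer case-analysis of how a great web can sit on a once-punctured torus with a single boundary component, and I expect this combinatorial pushdown to be the delicate final step in the argument.
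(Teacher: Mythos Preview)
This theorem is not proved in the paper at all: it is quoted from Boyer--Gordon--Zhang \cite{BGZ:genusone} as a black-box input, exactly like Theorem~\ref{thm:lack-meyer} and Theorem~\ref{BGZ:weak}. So there is no ``paper's own proof'' to compare your proposal against; you are attempting to reprove an external result that the present paper merely invokes.

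As for the sketch itself, it is a reasonable caricature of the kind of argument Boyer, Gordon, and Zhang actually run, but several steps as written do not go through. First, the surface $P$: it is not true in general that a small Seifert fibered filling slope is a boundary slope, and the Culler--Shalen machinery does not hand you such a $P$ directly. What one actually uses is the character variety of $M_K$ together with the norm on $H_1(\partial M_K;\mathbb{R})$; the small Seifert fibered hypothesis controls $\|\alpha\|$ via the structure of $\mathrm{PSL}_2(\mathbb{C})$--representations of $\pi_1 K(\alpha)$, and one compares this to the contribution of the slope $0$, which is a boundary slope because of the genus-one Seifert surface. The intersection-graph combinatorics you describe (Scharlemann cycles, webs on a once-punctured torus) do enter in the companion analysis, but your dichotomy ``reducing sphere in $K(0)$ or forced Seifert fibration of $K(0)$'' is too coarse: Scharlemann cycles on a genus-one surface typically produce lens-space or reducible conclusions on the \emph{other} filling, not on $K(0)$, and the ``non-Seifert fibered'' hypothesis on $K(0)$ is used in a more indirect way than simply ruling out a glued fibration across $\hat F$. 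Finally, your own acknowledgement that $\Delta=4$ ``requires a finer case-analysis'' that you ``expect'' to work is where the real content of the Boyer--Gordon--Zhang paper lies; what you have written is an outline of intentions rather than a proof.
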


\subsection{Montesinos links, torus links, and invariants from knot theory}\label{subsec:knottheory}

In this section we give a very brief overview of some knot and link invariants and how they will be used to obstruct the quotient links encountered in this paper from being Seifert links or Montesinos links.  We will present a series of criteria that will applied in each of the following sections.

A link is called \emph{$k$-almost alternating} if it has a $k$-almost alternating diagram, but no $(k-1)$-almost alternating diagram, i.e., if it has a diagram $D$ such that changing $k$ crossings of $D$ gives a new diagram that is alternating, but no such diagram where the same result is achieved after $k-1$ crossing changes.

Recall that the Khovanov homology, $\text{Kh}(L)$, is a bi-graded abelian group associated to $L$, and that the \emph{width} of $\text{Kh}(L)$ is the number of diagonals that support a nontrivial element in $\text{Kh}(L)$.   Denote this width by $|\text{Kh}(L)|$. Then we have the following theorem.  (See, for example, \cite{asaeda-przytycki}.)

\begin{theorem}\label{thm:KhWidth}
Let $L$ be a non-split $k$-almost alternating link.  Then $|\text{Kh}(L)|\leq k+2$.
\end{theorem}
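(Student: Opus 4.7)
The plan is to induct on the almost alternating number $k$, with Lee's theorem on the Khovanov homology of alternating links providing the base case and the unoriented Khovanov skein long exact sequence driving the inductive step. For the base case $k=0$, the link $L$ is non-split alternating, so Lee's theorem asserts that $\text{Kh}(L)$ is thin: it is supported on exactly two adjacent $\delta$-diagonals, where $\delta = j-2i$. Hence $|\text{Kh}(L)| = 2 \leq 0+2$.

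For the inductive step, assume the bound holds for all non-split $j$-almost alternating links with $j<k$, and let $D$ be a $k$-almost alternating diagram of $L$ with a chosen dealternator crossing $c$. Let $D_0$ and $D_1$ denote the two Kauffman smoothings of $D$ at $c$. The key combinatorial observation is that each $D_\epsilon$ is at most $(k-1)$-almost alternating: if $A$ is the alternating diagram obtained by changing all $k$ dealternators of $D$, then smoothing $c$ in $A$ in the same fashion produces an alternating diagram $A_\epsilon$, and $D_\epsilon$ differs from $A_\epsilon$ only in the remaining $k-1$ dealternators. Provided each smoothing is non-split (which can typically be arranged by choosing $c$ appropriately, and in any case is handled by the tensor-product behavior of Khovanov homology under disjoint union), the inductive hypothesis yields $|\text{Kh}(D_\epsilon)| \leq k+1$ for $\epsilon=0,1$.

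Next I would invoke the unoriented skein long exact sequence in Khovanov homology associated to smoothing $c$, which relates $\text{Kh}(D)$ to $\text{Kh}(D_0)$ and $\text{Kh}(D_1)$ with bigrading shifts determined by the writhes of the three diagrams. The crucial feature, after translating to the $\delta$-grading, is that the contributions of $D_0$ and $D_1$ sit on $\delta$-diagonals that are offset from one another by exactly one. Therefore the $\delta$-support of $\text{Kh}(D)$ is contained in the union of two intervals of length at most $k+1$ that overlap along an interval of length at least $k$, giving the desired bound $|\text{Kh}(D)| \leq (k+1) + 1 = k+2$.

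The most delicate point, and the one I expect to be the main obstacle, is the bigrading bookkeeping of the previous paragraph. One must verify that, once the writhe shifts from $D$ to $D_0$ and from $D$ to $D_1$ are correctly tracked, the supports of the two smoothings are offset by exactly one $\delta$-diagonal, regardless of the sign of $c$ and the resolution conventions. A secondary subtlety is the case in which one of the smoothings splits the link, which forces either a careful re-selection of dealternator or a short supplementary argument using the Kunneth-type formula for Khovanov homology on disjoint unions.
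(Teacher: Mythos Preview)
The paper does not prove this theorem; it is quoted from the literature (Asaeda--Przytycki) and used as a black box, so there is no ``paper's proof'' to compare against. Your outline follows the standard strategy, and the base case and the observation that smoothing a dealternator drops the almost-alternating count by one are both fine.

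The genuine gap is exactly where you flag it, and it is not merely bookkeeping: the inductive hypothesis you have stated is too weak to close the induction. Knowing only that $|\Kh(D_0)|\le k+1$ and $|\Kh(D_1)|\le k+1$ tells you the \emph{lengths} of the two $\delta$-support intervals but says nothing about their \emph{positions}. In the long exact sequence you quote, the $\delta$-shifts on the $D_0$ and $D_1$ pieces differ by $c+2$, where $c$ is the difference in negative-crossing counts; this quantity is not controlled by the hypothesis ``$D$ is $k$-almost alternating,'' so you cannot conclude that the two shifted intervals overlap in the way you need. Without that, the union could in principle have width up to $2(k+1)$.

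The standard repair is to strengthen the inductive statement: instead of bounding only the width, prove explicit two-sided bounds on the $\delta$-support of $\Kh(D)$ in terms of diagrammatic quantities (for instance $n_\pm(D)$ and the circle counts $s_\pm(D)$ of the all-$0$ and all-$1$ states, equivalently the Turaev genus). These quantities behave predictably under a single smoothing, so the two shifted intervals are forced to interlock correctly, and the width bound $k+2$ falls out as a corollary. Your sketch becomes a proof once the induction is loaded in this way; as written, the step ``offset by exactly one diagonal'' is an assertion rather than a consequence of what has been established.
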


It has been shown by Abe and Kishimoto \cite{abe} that any Montesinos link is either alternating or 1-almost-alternating, so we have our first obstruction criterion.

\begin{criterion}\label{crit:width}
If $|\text{Kh}(L)|\geq 4$, then $L$ is not a length three Montesinos link.
\end{criterion}

When we encounter links $L$ that do not satisfy this criterion, then we will use the following program to show they are not a Montesinos link.  We will generate a list of all Montesinos links whose crossing numbers are compatible with that of $L$ (i.e., less than the number of crossings in a diagram of $L$).  (Note that the crossing number of a Montesinos link is well understood \cite{lick-thistle}.)  We will then check this list for elements that, if $L$ is a knot, have the same determinant, Alexander polynomial, Jones polynomial, Khovanov homology, and, if need be, Kauffman polynomial or HOMFLYPT polynomial, and that, if $L$ is a 2-component link, have the same determinant, Jones polynomial, Khovanov homology, and if need be, Kauffman polynomial or HOMFLYPT polynomial.  We will refer to this method as \emph{Method 1}.  This very large number of computations was performed using the KnotTheory` package for Mathematica\textsuperscript{\textregistered} \cite{wolfram}.

Examples of the Mathematica files used to implement Method 1 and to calculate knot invariants throughout this paper are available on the author's webpage, and further information will be provided upon request.

Next, we observe that if $K$ is a length three Montesinos link, then it is the union of 2-bridge knots and unknots.  If $K$ is the union of two unknots, then it has the form $K[p_1/q_1,p_2/q_2,p_3/q_3]$, where each $p_i$ is even.  If one component of $K$ is the 2-bridge knot $K[p/q]$, then $K$ has the form $K[p_1/q_1,p_2/q_2,x/q]$ with $q_1$ and $q_2$ even and with $x=p$ or $\bar p$, where $p\bar p \equiv 1\pmod q$.  If we consider the unknot a 2-bridge knot, then we have the following criterion.

\begin{criterion}\label{crit:2bridge}
If $K$ is a 2-component link such that one component is not a 2-bridge knot, then $K$ is not a Montesinos link.
\end{criterion}

Using Method 1 and Criteria \ref{crit:width} and \ref{crit:2bridge}, any knot or link we encounter that we claim is not a Montesinos knot or link is shown to not be a Montesinos knot or link.

Now we recall some facts about torus knots (see, for example, \cite{cromwell}).  Let $T(p,q)$ be the ($p,q)$-torus link for $p>q\geq 2$, where $T(p,q)$ is a knot if and only if $p$ and $q$ are coprime.  Then, $T(p,q)$ is a positive link, i.e., has a diagram with all positive crossings.  Furthermore, in the case of a torus knot, $2g(T(p,q)) = (p-1)(q-1)$, where $g(K)$ denotes the genus of the knot $K$, and $\det(T(p,q))=p$ if $q$ is even, and 1 if both $p$ and $q$ are odd, where $\det(L)$ denotes the determinant of the link $L$.  Let $s(K)$ denote the Rasmussen invariant of $K$, as defined in \cite{rasmussen}, where the following was shown.

\begin{proposition}
If $K$ is a positive knot, then $s(K)=2g(K)$. 
\end{proposition}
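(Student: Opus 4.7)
The plan is to sandwich $s(K)$ between an upper bound from the smooth $4$-genus and a lower bound extracted directly from any positive diagram of $K$, and to observe that for positive knots these two bounds both coincide with $2g(K)$.

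First, I would invoke Rasmussen's universal slice-genus bound $s(K) \leq 2g_4(K)$, where $g_4(K)$ denotes the smooth $4$-ball genus. Since every Seifert surface for $K$ can be pushed into $B^4$ to give a smoothly embedded surface there, $g_4(K) \leq g(K)$, and hence $s(K) \leq 2g(K)$. This half of the inequality uses no positivity hypothesis on $K$.

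Next, I would fix a positive diagram $D$ of $K$ with $c$ crossings and $s_D$ Seifert circles. Seifert's algorithm applied to $D$ produces an oriented spanning surface $\Sigma_D$ with $\chi(\Sigma_D) = s_D - c$ and a single boundary component, so $2g(\Sigma_D) = c - s_D + 1$. A theorem of Cromwell on homogeneous links asserts that for a positive knot, Seifert's algorithm applied to a positive diagram produces a minimum-genus Seifert surface; combining this with the genus formula above gives $2g(K) = c - s_D + 1$.

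The heart of the argument is the matching lower bound $s(K) \geq c - s_D + 1$, which I would establish by a direct computation in Lee's deformation of Khovanov homology. Orienting $D$ and forming the oriented resolution, one observes that since every crossing of $D$ is positive, this coincides with the all-$0$ resolution, the writhe equals $c$, and the usual shifts $[n_-]\{n_+ - 2n_-\}$ collapse to the single quantum shift $\{c\}$. Lee's canonical generators $\mathfrak{s}_o$ and $\mathfrak{s}_{\bar o}$ both live at this resolution, and a bookkeeping computation in which each of the $s_D$ Seifert circles contributes a label of quantum grading $\pm 1$ shows that the Lee homology classes $[\mathfrak{s}_o + \mathfrak{s}_{\bar o}]$ and $[\mathfrak{s}_o - \mathfrak{s}_{\bar o}]$ have quantum filtration gradings $c - s_D + 1$ and $c - s_D - 1$. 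By Rasmussen's definition of $s(K)$ as the smaller of these plus one, we get $s(K) \geq c - s_D + 1$, and combining with the upper bound squeezes out the equality $s(K) = 2g(K)$. The main obstacle is this final Lee-homology computation: one must pin down grading conventions carefully and verify that the canonical generators survive to nonzero homology classes with the claimed filtration gradings, though positivity of $D$ eliminates any negative-crossing shifts and makes the identification essentially formal.
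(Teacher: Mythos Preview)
The paper does not supply a proof of this proposition; it is simply quoted as a result of Rasmussen. Your outline is essentially Rasmussen's original argument, so it matches what the paper is invoking.

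One simplification worth noting: the appeal to Cromwell's theorem is unnecessary. Seifert's algorithm already gives the trivial inequality $2g(K) \leq 2g(\Sigma_D) = c - s_D + 1$, and chaining this with your other bounds yields
\[
c - s_D + 1 \leq s(K) \leq 2g_4(K) \leq 2g(K) \leq c - s_D + 1,
\]
forcing all four quantities to coincide. The minimality of $\Sigma_D$ (and hence Cromwell's result for positive knots) is then a \emph{consequence} of the argument rather than an input, which is in fact how Rasmussen presents it. Also, recheck the precise filtration gradings you assign to $[\mathfrak s_o \pm \mathfrak s_{\bar o}]$: with the standard conventions one obtains $c - s_D$ and $c - s_D + 2$, so that $s = s_{\min} + 1 = c - s_D + 1$, rather than $c - s_D \pm 1$ as you wrote; your stated values would give $s = c - s_D$. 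You yourself flag this bookkeeping as the delicate step, and it is, but the overall architecture is correct.
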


Recall that $2g(K)$ is bounded below by the breadth of the Alexander polynomial, which we denote $\text{br}(\Delta_K(t))$.  This gives us the following criterion.

\begin{criterion}\label{crit:positive}
If $s(K)<\text{br}(\Delta_K(t))$ or $2g(K)\not=s(K)$, then $K$ is not a torus knot.
\end{criterion}

We also have, by our discussion above:

\begin{criterion}\label{crit:det}
If $\det(K)>s(K)+1$, then $K$ is not a torus knot.
\end{criterion}

If we consider the unknot a torus knot, then each component of a two component Seifert link is a torus knot, so we have the following criterion.

\begin{criterion}\label{crit:torus}
If $L$ is a two-component link such that a component is not a torus knot, then $K$ is not a Seifert link.
\end{criterion}

In what follows, Criteria \ref{crit:positive}, \ref{crit:det}, and \ref{crit:torus} often suffice to prove that a link is not a Seifert link.  In the few cases where they fail, further argument is given to accomplish the feat.

\section{The case of $(2, |q_2|, |q_3|)$}\label{section:2pq}

Let $K_{p,q}$ be the hyperbolic pretzel knot  $P(-2, 2p+1, 2q+1)$ (see Figure \ref{fig:(2,p,q)PretzelTangleKleinBottle}).  Since Ichihara and Jong have shown that $K_{p,p}$ admits no small Seifert fibered surgery \cite{ichi-jong-kabaya_2pp}, and by interchanging $p$ and $q$ if necessary, we may assume that $|q|>|p|$ if $p$ and $q$ have the same sign and $p>0$ otherwise.  Let $\alpha_r = 4(p+q+1)-r$, for $r\in\Q$.

Note that $\alpha_r$ is chosen this way so that $\alpha_r$-surgery on $K_{p,q}$ will correspond with $r$-filling of $\mathcal T_{p,q}$.   This becomes clear if one carefully follows through the process of obtaining $\mathcal T_{p,q}$ from $K_{p,q}$ by applying the Montesinos trick and isotoping.

\begin{figure}
\centering
\includegraphics[scale = .50]{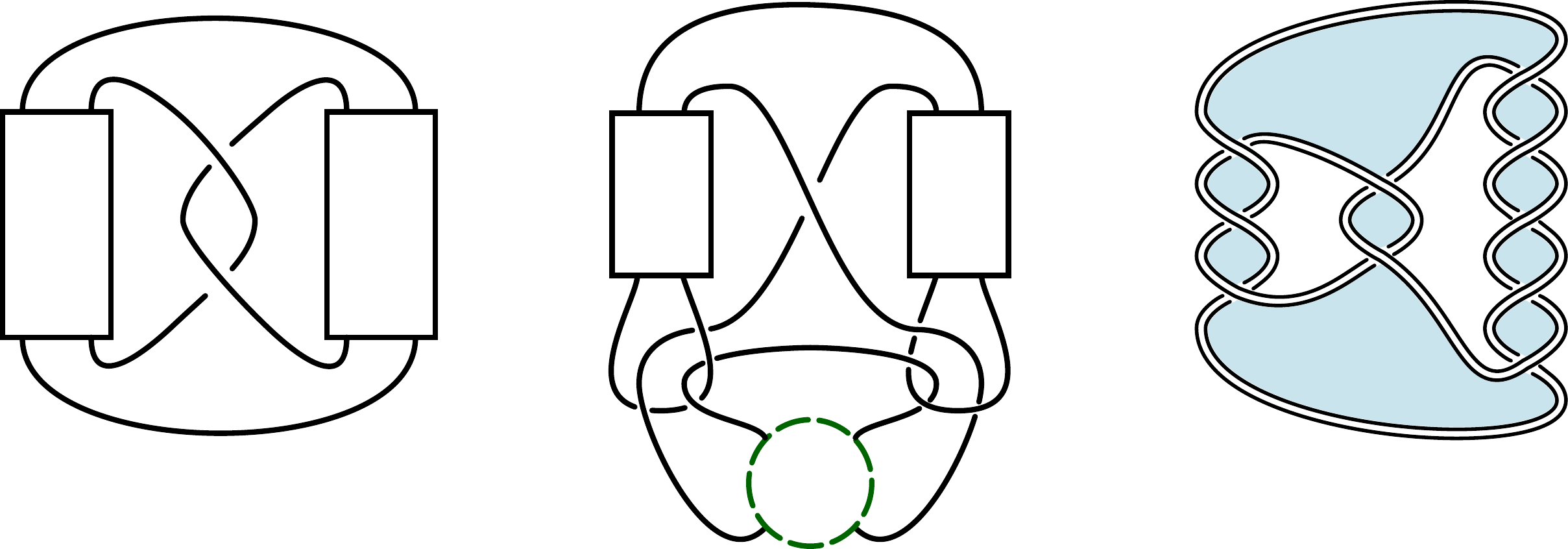}
\put(-138,78){$q$}
\put(-276,70){\footnotesize$2q$+$1$}
\put(-205,78){$p$}
\put(-348,70){\footnotesize$2p$+$1$}
\caption{The pretzel knot $P(-2,2p+1, 2q+1)$, the quotient tangle $\mathcal T_{p,q}$, and the pretzel knot $P(-2,5,-3)$, shown as the boundary of a punctured Klein bottle}
\label{fig:(2,p,q)PretzelTangleKleinBottle}
\end{figure}

Our first result is the following.

\begin{reptheorem}{thm:main1}
The hyperbolic pretzel knot $P(-2, 2p+1, 2q+1)$, with the conventions discussed above, admits a small Seifert fibered surgery if and only if $p=1$, in which case it admits precisely the following small Seifert fibered surgeries:
\begin{itemize}
\item $P(-2,3,2q+1)(4q+6) = S^2(1/2,-1/4,2/(2q-5))$
\item $P(-2,3,2q+1)(4q+7) = S^2(2/3,-2/5,1/(q-2))$
\end{itemize}
\end{reptheorem}

We remark that the existence of these exceptional surgeries was previously known \cite{eudave-munoz}.

The key fact in our method of analyzing these knots is that they are strongly invertible.  Let $\mathcal T_{p,q}$ be the tangle obtained by performing the Montesinos trick (see Figure \ref{fig:(2,p,q)PretzelTangleKleinBottle}).  We now have the advantage of viewing the surgery space $K_{p,q}(\alpha_r)$ as the branched double cover of $S^3$ along $\mathcal T_{p,q}(r)$.  It is easy to verify the two classes of exceptional surgeries in Theorem \ref{thm:main1} by noticing that $\mathcal T_{1,q}(1)$ and $\mathcal T_{1,q}(2)$ are the Montesinos links $K[2/3, -2/5, 1/(q-2)]$ and $K[1/2, -1/4, 2/(2q-5)]$, respectively. [See Figure \ref{fig:(-2,3,2q+1)TangleAndExceptionalFillings}.]

\begin{figure}
\centering
\includegraphics[scale = .48]{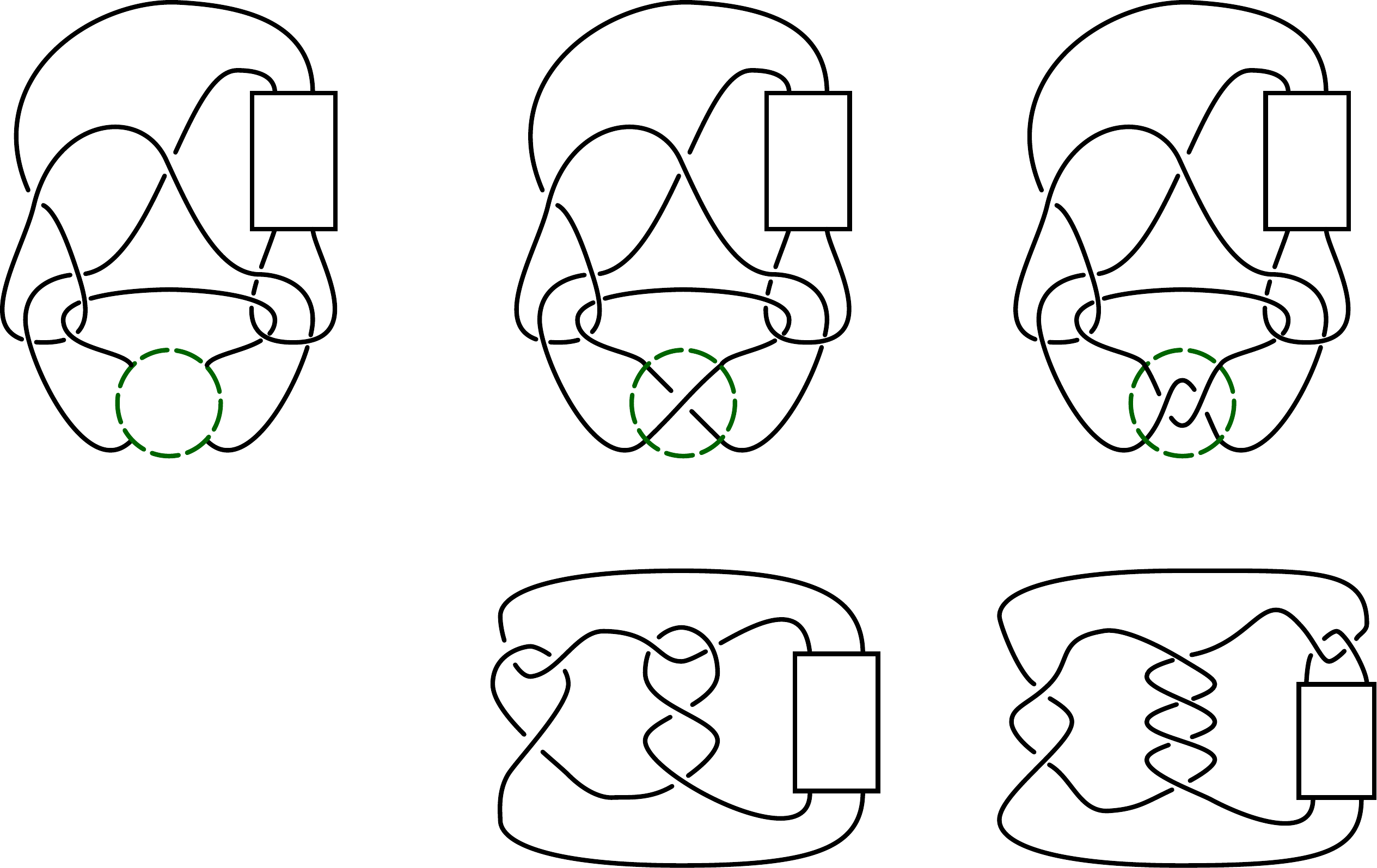}
\put(-21,177){$q$}
\put(-148,177){$q$}
\put(-18,30){\small$q$-2}
\put(-146,33){\small$q$-2}
\put(-280,177){$q$}
\put(-180,85){\huge$\wr$}
\put(-53,87){\huge$\wr$}
\caption{The tangle $\mathcal T_{1,q}$, along with fillings $\mathcal T_{1,q}(1)$ and $\mathcal T_{1,q}(2)$ and the respective Montesinos links that result after isotopy: $K[2/3,-2/5,1/(q-2)]$ and $K[1/2, -1/4, 2/(2q-5)]$}
\label{fig:(-2,3,2q+1)TangleAndExceptionalFillings}
\end{figure}

The proof that the $K_{p,q}$ admits no other small Seifert fibered surgeries is accomplished by the following two lemmas and the techniques of Subsection \ref{subsec:knottheory}.

\begin{lemma}\label{lemma:2pq1}
If $K_{p,q}(\alpha_r)$ is a small Seifert fibered space for $p\not=1$, then $|p|\leq 8$ and $|q|\leq 8$.
\end{lemma}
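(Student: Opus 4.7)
My plan is to invoke Proposition~\ref{prop:seifertormont} to reduce the statement to a question about which tangle fillings $\mathcal{T}_{p,q}(r)$ yield a Seifert link or a length-three Montesinos link, and then combine known toroidal surgery information with the distance theorems of the previous subsection to restrict the possibilities for $r$, so that the remaining $p,q$ can only be bounded if we also bound $r$.

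The first step is to identify a toroidal surgery on $K_{p,q}$. The Klein bottle diagram on the right of Figure~\ref{fig:(2,p,q)PretzelTangleKleinBottle} is the model: $K_{p,q}$ bounds a once-punctured Klein bottle whose boundary slope $\alpha_{r_0}$ gives a non-Seifert toroidal filling for all but finitely many $(p,q)$. By Theorem~\ref{BGZ:weak} (or Theorem~\ref{thm:BGZgenusone} in the genus-one subfamily), any small-SFS slope $\alpha_r$ must satisfy $\Delta(\alpha_r,\alpha_{r_0})\le 6$. Since consecutive $\alpha_r$ differ by one and Theorem~\ref{thm:non-integral} reduces us to integer $r$ on most of the parameter range, this leaves only a small window of admissible $r$ (independent of $p,q$).

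The second step is to inspect each admissible $r$. For $r$ in this small window, I would redraw $\mathcal{T}_{p,q}(r)$ and isotope it to a standard form in which the $p$- and $q$-twist regions are exposed. For each such $r$ the result is naturally a Montesinos-type link in which two of the rational parameters depend linearly on $p$ or $q$ and the remaining parameter is a fixed small rational determined by $r$ (with $r=1,2$ and $p=1$ producing the two genuine length-three Montesinos links listed in Theorem~\ref{thm:main1}). For all other $r$, the filling presents the link with more than three Conway tangles, and I would verify, using the classification of Montesinos links and the criteria of Subsection~\ref{subsec:knottheory} (in particular Criteria~\ref{crit:2bridge} and~\ref{crit:torus}, supplemented by the Rasmussen/Alexander bounds), that the link is neither a Seifert link nor a length-three Montesinos link whenever $|p|\ge 9$ or $|q|\ge 9$.

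The main obstacle will be the bookkeeping in the second step: for each of the handful of filling slopes $r$ left by the distance argument, one must carry out an explicit tangle simplification, track the sign/normalization conventions that pin down $|p|<|q|$ (same sign) or $p>0$ (opposite signs), and verify that the Conway decomposition has length at least four for $|p|,|q|\ge 9$. Once this is done the statement follows, and (as in the program described at the end of Subsection~\ref{subsec:knottheory}) the finitely many remaining pairs $(p,q)$ with $|p|,|q|\le 8$ can be handed off to Method~1 in a subsequent lemma.
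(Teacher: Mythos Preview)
Your plan differs substantially from the paper's, and step~2 has a genuine gap.

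The paper does \emph{not} first bound $r$ and then analyze the links $\mathcal T_{p,q}(r)$ diagram-by-diagram. Instead it reparametrizes: remove a ball around the $q$-twist region of $\mathcal T_{p,q}(r)$ to form a tangle $\mathcal S_{p,r}$, and let $N_{p,r}$ be its branched double cover. Then $N_{p,r}(-1/q)=K_{p,q}(\alpha_r)$, while $N_{p,r}(\infty)$ is $\alpha_r$-surgery on the torus knot $T(2,2p+3)$ (hence exceptional), $N_{p,r}(0)$ is a graph manifold, and $N_{p,r}(-1)$ is $\alpha_r$-surgery on the 2-bridge knot $K[-2p/(6p+1)]$, which is hyperbolic for $p\neq 1$ by Brittenham--Wu. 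These fillings, together with Lemma~\ref{lemma:cable}, force $N_{p,r}$ itself to be hyperbolic. Theorem~\ref{thm:lack-meyer} applied to the two exceptional fillings $N_{p,r}(\infty)$ and $N_{p,r}(-1/q)$ then gives $\Delta(\infty,-1/q)=|q|\le 8$ directly, uniformly in $r$ (integral or not). The symmetric argument bounds $|p|$. No knot-invariant computations enter at this stage.

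The gap in your plan is step~2. Even after bounding $r$ to a finite window, you still face a genuinely infinite two-parameter family of links $\mathcal T_{p,q}(r)$, and you must show none of them (for $|p|\ge 9$ or $|q|\ge 9$) is a Seifert link or a length-three Montesinos link. Saying ``the Conway decomposition has length at least four'' is a statement about one particular diagram, not about the link type; a length-three Montesinos link can certainly be drawn with four or more rational tangles. The criteria of Subsection~\ref{subsec:knottheory} and Method~1 are designed for finite lists of specific links, not for infinite families. One could in principle attempt to compute, say, Khovanov width or Rasmussen invariants across the whole family (as the paper does for a single one-parameter family in Section~\ref{section:345}), but that is a substantial argument you have not supplied, and it would have to be carried out separately for each admissible $r$ with both $p$ and $q$ varying. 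The paper's reparametrization trick converts the $q$-dependence into a Dehn-filling parameter on a fixed hyperbolic manifold and thereby sidesteps all of this.
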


\begin{lemma}\label{lemma:2pq2}
If $K_{1,q}(\alpha_r)$ is a small Seifert fibered space for $r\not\in\{1,2\}$, then $|q|\leq 8$. 
\end{lemma}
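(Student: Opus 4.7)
The plan is to combine the exceptional surgery distance bounds with an analysis of rational fillings of the quotient tangle $\mathcal{T}_{1,q}$. A central observation is that $K_{1,q}=P(-2,3,2q+1)$ bounds the once-punctured Klein bottle pictured in Figure~\ref{fig:(2,p,q)PretzelTangleKleinBottle}, whose boundary slope on $\partial N(K_{1,q})$ is $\alpha_0=4q+8$. Capping off the puncture in $K_{1,q}(\alpha_0)$ produces a closed Klein bottle whose tubular neighborhood has essential torus boundary, so $\alpha_0$ is a toroidal slope. Together with the two small Seifert fibered slopes $\alpha_1,\alpha_2$ exhibited earlier, this gives $K_{1,q}$ three known exceptional slopes that I can use to constrain further surgery parameters.

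First, I would restrict $r$ to a finite set. Writing $r=a/b$ in lowest terms with $b\geq 1$, a direct computation gives $\Delta(\alpha_r,\alpha_0)=|a|$, $\Delta(\alpha_r,\alpha_1)=|a-b|$, and $\Delta(\alpha_r,\alpha_2)=|a-2b|$. Since each of $\alpha_0,\alpha_1,\alpha_2$ is exceptional on the hyperbolic knot $K_{1,q}$, Theorem~\ref{thm:lack-meyer} forces all three distances to be at most $8$. This system has only finitely many rational solutions $r=a/b$, yielding a fixed finite set $R\subset\Q$ of candidate exceptional slopes independent of $q$; Theorem~\ref{BGZ:weak} and Theorem~\ref{thm:non-integral} can be invoked to shrink $R$ further. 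Next, for each $r\in R\setminus\{1,2\}$, Proposition~\ref{prop:seifertormont} applied to the branched-cover description $K_{1,q}(\alpha_r)\cong\widetilde{\mathcal{T}_{1,q}(r)}$ reduces the problem to showing that the filled link $L_{r,q}:=\mathcal{T}_{1,q}(r)$ is neither a Seifert link nor a length-three Montesinos link.

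Since $\mathcal{T}_{1,q}$ carries a twist region of $q$ half-twists visible in Figure~\ref{fig:(-2,3,2q+1)TangleAndExceptionalFillings}, the family $\{L_{r,q}\}_q$ (for fixed $r\in R$) is a twist family whose crossing number grows linearly in $q$. Via the Kauffman-bracket skein relation applied around the twist region, invariants of $L_{r,q}$ such as the Jones polynomial, Alexander polynomial, determinant, Rasmussen invariant, and Khovanov width admit closed-form expressions in $q$. I would then eliminate each $r\in R\setminus\{1,2\}$ by showing that for $|q|>8$ these invariants contradict one of the obstructions from Section~\ref{subsec:knottheory}: the Seifert-link option is ruled out via Criteria~\ref{crit:positive}, \ref{crit:det}, and \ref{crit:torus}, which detect that a twisted component of $L_{r,q}$ cannot satisfy the rigid invariant relations enjoyed by torus knots; the Montesinos option is ruled out via Criterion~\ref{crit:width}, applying as soon as the Khovanov width exceeds $3$, together with Method~1 for the remaining width-$3$ candidates of bounded crossing number. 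The main obstacle is the uniform handling of these infinite families; I expect this to reduce, after the skein expansion, to finitely many per-$r$ polynomial inequalities in $q$, whose joint solution set is precisely $|q|\leq 8$.
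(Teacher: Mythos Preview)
Your strategy diverges from the paper's in a fundamental way. The paper never fixes $r$ and chases the infinite family $\{L_{r,q}\}_q$ with invariants. Instead it removes the $q$-twist region from $\mathcal{T}_{1,q}(r)$ to form a tangle $\mathcal{S}_r$, takes its branched double cover $N_r$, and proves $N_r$ is hyperbolic for $r\notin\{0,1,2\}$ by exhibiting enough explicit fillings (at slopes $0,\infty,-1,-1/2,1$) to rule out $N_r$ being reducible, Seifert fibered, or toroidal. Since $N_r(\infty)$ is visibly small Seifert fibered and $N_r(-1/q)=K_{1,q}(\alpha_r)$, Theorem~\ref{thm:lack-meyer} yields $|q|=\Delta(-1/q,\infty)\leq 8$ in one stroke, with no per-$r$ casework and no link invariants whatsoever.

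Your proposal has a genuine gap at the second step. The Kauffman-bracket skein relation governs the Jones polynomial, not Khovanov homology or the Rasmussen invariant; Khovanov homology obeys a long exact sequence under crossing resolution, from which closed-form width formulas do not follow automatically (the paper's own Lemma~\ref{lemma:khisom} shows how much hand-tailored work a single such computation requires), and $s$ has no skein recursion at all. So the claim that Criteria~\ref{crit:width} and~\ref{crit:positive} can be verified for all $|q|>8$ ``via the Kauffman-bracket skein relation'' is unsupported. Method~1 is likewise unavailable: it is an exhaustive comparison against all Montesinos links of bounded crossing number, and the crossing number of $L_{r,q}$ is unbounded in $q$. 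What remains is your last sentence, an ``expectation'' that finitely many polynomial inequalities in $q$ cut out exactly $|q|\leq 8$; but that expectation \emph{is} the lemma, and nothing in the proposal argues for it or explains why the threshold should be $8$ rather than some other number. The hyperbolization of $N_r$ is precisely the missing idea that replaces this open-ended computation with a single geometric bound.
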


Before we prove these lemmas, we should remark on the possible surgery slopes $\alpha_r$.  We notice that each knot $K_{p,q}$ bounds a punctured Klein bottle at slope $\alpha_0$ (see Figure \ref{fig:(2,p,q)PretzelTangleKleinBottle}).  It follows that $K_{p,q}(\alpha_0)$ is toroidal.  By Theorem \ref{thm:lack-meyer}, it follows that if $K_{p,q}(\alpha_r)$ is a small Seifert fibered space, then $\Delta(\alpha_r,\alpha_0)\leq 8$.

In many cases, it should be possible to reduce this distance bound to 5, but this is dependent on work in progress by Boyer, Gordon, and Zhang \cite{BGZ:toroidalSeifert}.  However, using Theorem \ref{BGZ:weak}, we can fairly easily show the following lemma.

\begin{lemma}\label{lemma:2pqslopes}
If $K_{p,q}(\alpha_r)$ is a small Seifert fibered space, and $|p|, |q|\geq 4$, then $\Delta(\alpha_r,\alpha_0)\leq 6$.
\end{lemma}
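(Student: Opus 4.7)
The plan is to invoke Theorem \ref{BGZ:weak} with $\alpha_0$ playing the role of the toroidal slope. Since $K_{p,q}$ bounds the punctured Klein bottle $F$ at slope $\alpha_0$, the surgered manifold $K_{p,q}(\alpha_0)$ contains a closed Klein bottle $\hat F$, and a regular neighborhood $N(\hat F)$ is a twisted $I$-bundle over the Klein bottle with torus boundary $T$. This yields a splitting $K_{p,q}(\alpha_0) = N(\hat F) \cup_T W$ in which $N(\hat F)$ is Seifert fibered. To apply Theorem \ref{BGZ:weak} it suffices to verify, for $|p|,|q| \geq 4$, that $T$ is essential in $K_{p,q}(\alpha_0)$ and that the complementary piece $W$ is not Seifert fibered; the conclusion $\Delta(\alpha_r,\alpha_0)\leq 6$ then follows immediately.

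The essentialness of $T$ is the easier of the two ingredients. The torus $T$ is $\pi_1$-injective on the $N(\hat F)$ side by general properties of twisted $I$-bundles over the Klein bottle, so any compressing disk for $T$ must lie in $W$. Such a disk would force $\hat F$ to bound a twisted solid torus, making $K_{p,q}(\alpha_0)$ a prism-type small Seifert fibered space of the form $S^2(1/2,1/2,\beta/\alpha)$. A direct comparison of the cyclic group $H_1(K_{p,q}(\alpha_0))\cong \Z/\alpha_0\Z$, of order $|\alpha_0|=4|p+q+1|$, with the homology of such prism manifolds (together with the fact that $K_{p,q}$ is hyperbolic, so $K_{p,q}(\alpha_0)$ cannot be reducible) rules out the relevant Seifert invariants in the range $|p|,|q|\geq 4$.

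The main obstacle is showing that $W$ is not Seifert fibered, and this is where the hypothesis $|p|,|q|\geq 4$ really enters. I would exploit the strong inversion $\varphi$ on $K_{p,q}$: since $\varphi$ extends across the Dehn filling and the pair $(\hat F, T)$ is essentially canonical, we may arrange $\varphi$ to preserve $T$, and hence to restrict to an involution on $W$. Via the Montesinos trick, this identifies $W$ as the double cover of a three-ball branched over an explicit tangle, obtained from $\mathcal T_{p,q}$ by deleting the disk dual to $\hat F$. This residual tangle carries twists whose number grows linearly in $|p|$ and $|q|$, and for $|p|,|q|\geq 4$ enough twists survive the quotient to force its branched double cover $W$ to be hyperbolic (for smaller parameters $W$ can genuinely be Seifert fibered, making $K_{p,q}(\alpha_0)$ a graph manifold, which is why the hypothesis is imposed). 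Once $W$ is known to be non-Seifert, Theorem \ref{BGZ:weak} yields the desired bound $\Delta(\alpha_r,\alpha_0)\leq 6$.
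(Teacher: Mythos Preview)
Your overall strategy is the same as the paper's: split $K_{p,q}(\alpha_0)$ along the torus $T=\partial N(\hat F)$ as $D^2(2,2)\cup_T W$ and invoke Theorem~\ref{BGZ:weak} once you know $W$ is not Seifert fibered. The problem is that you never actually prove that $W$ is not Seifert fibered. Your entire argument for this step is the sentence ``for $|p|,|q|\geq 4$ enough twists survive the quotient to force its branched double cover $W$ to be hyperbolic,'' which is an assertion, not a proof: you do not identify the residual tangle, you do not explain what ``enough twists'' means quantitatively, and you give no mechanism (geometric, combinatorial, or computational) by which hyperbolicity would follow. Aiming for hyperbolicity is also more than you need---Theorem~\ref{BGZ:weak} only requires $W$ to be non-Seifert-fibered.

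The paper fills this gap in a much more concrete way. It denotes $W$ by $X_{p,q}$ and computes three explicit Dehn fillings by looking at the corresponding rational tangle fillings downstairs:
\[
X_{p,q}(0)=L(p+q+1,1),\qquad X_{p,q}(\infty)=L(4pq-2p-2q-3,\,2pq-q-2),\qquad X_{p,q}(-1)=S^2\bigl(1/3,\,1/(p-1),\,q/(q-1)\bigr).
\]
It then runs a short case analysis using Lemma~\ref{SFfillings}: a Seifert piece over the disk with three or more exceptional fibers has no lens space fillings; one over the M\"obius band forces two multiplicity-$2$ fibers in every filling; and if $X_{p,q}=D^2(a,b)$, then the unique reducible (cabling) slope $\gamma$ must sit at distance one from both lens space slopes $0$ and $\infty$, pinning $\gamma$ down and forcing the $(-1)$-filling to have a specific fiber structure. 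Each possibility collapses to $p\in\{1,2,3\}$ or $q\in\{2,3\}$, contradicting $|p|,|q|\geq 4$. This is the content you are missing; your tangle/hyperbolicity sketch does not substitute for it.

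A smaller issue: your homology argument for essentialness of $T$ is also left vague. The order of $H_1$ of a prism manifold $S^2(1/2,1/2,\beta/\alpha)$ is $4|\alpha+\beta|$, which is perfectly compatible with $|\alpha_0|=4|p+q+1|$, so homology alone does not obviously rule this out. In the paper this point is handled earlier (the Klein-bottle slope on a hyperbolic pretzel knot is toroidal), but if you want to argue it inside the lemma you should do so explicitly.
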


Of course, if $r$ is integral, this means that $|r|\leq 6$, and if we have $r/s\in\Q$, we have that $|4(p+q+1)s-r|\leq 6$.

\begin{proof}[Proof of Lemma \ref{lemma:2pqslopes}]
We begin by noticing that $K_{p,q}(\alpha_0) = D^2(2,2)\cup_{T^2}X_{p,q}$ (see Figure \ref{fig:Tpq(0)}). Under the hypotheses of the lemma, we will show that $X_{p,q}$ is not a Seifert fibered space, so Theorem \ref{BGZ:weak} gives us the desired bound.  If $p=1$, then $X_{1,q}=D^2(3,|q-1|)$, and Theorem \ref{BGZ:weak} does not apply.

\begin{figure}
\centering
\includegraphics[scale = .48]{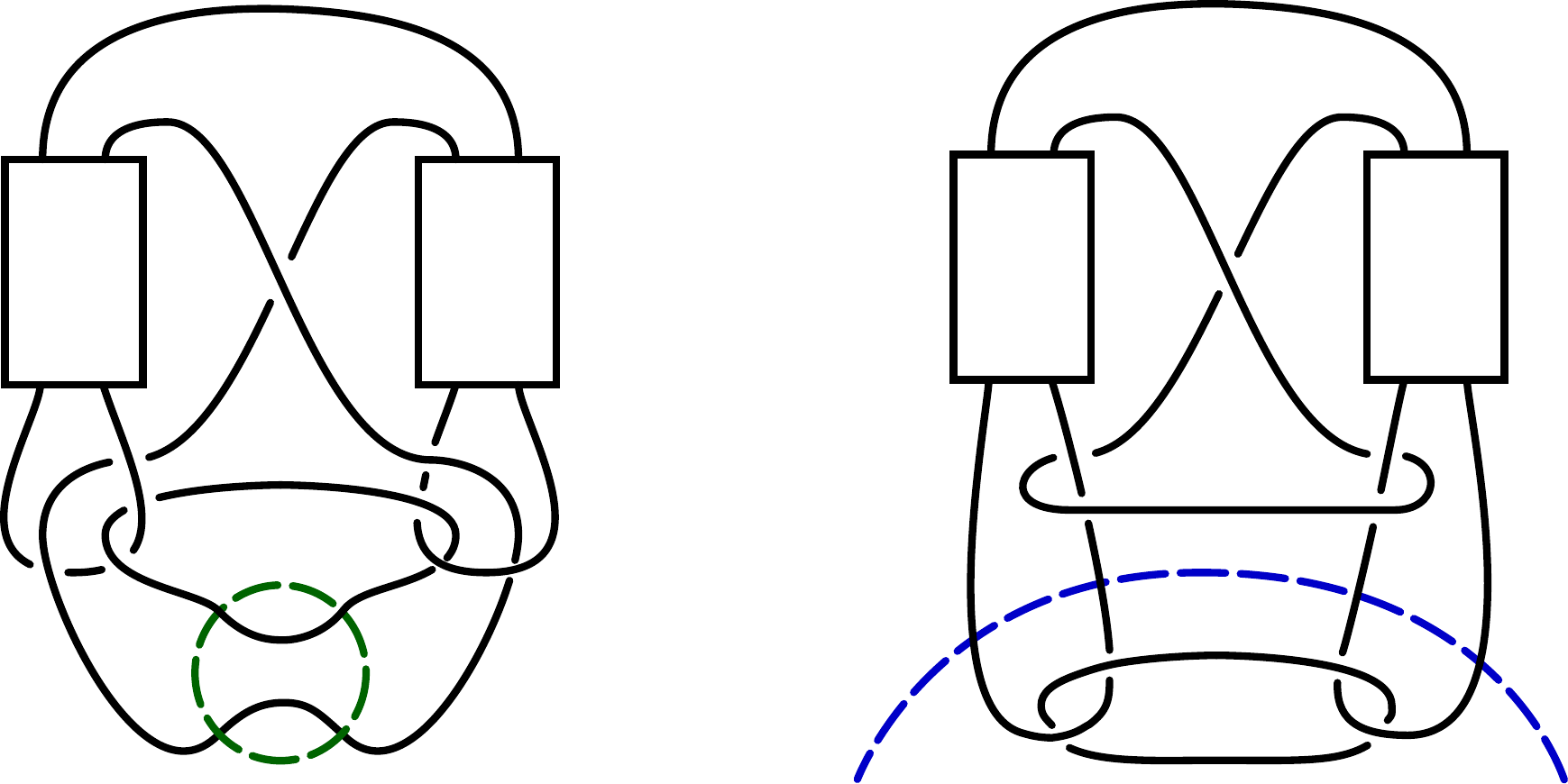}
\put(-89,77){$p$}
\put(-232,77){$p$}
\put(-29,77){\small$q$+1}
\put(-170,77){\small$q$}
\caption{The link $\mathcal T_{p,q}(0)$, whose branched double cover corresponds to $\alpha_0$-surgery on $K_{p,q}$}
\label{fig:Tpq(0)}
\end{figure}

Consider the following fillings on $X_{p,q}$. (See Figure \ref{fig:TpqFillings}.)

$$\begin{array}{l} 
X_{p,q}(0) = L(p+q+1,1) \\
X_{p,q}(\infty) = L(4pq-2p-2q-3,2pq-q-2) \\
X_{p,q}(-1) = S^2(1/3,1/(p-1),q/(q-1))
\end{array}$$

\begin{figure}
\centering
\includegraphics[scale = .45]{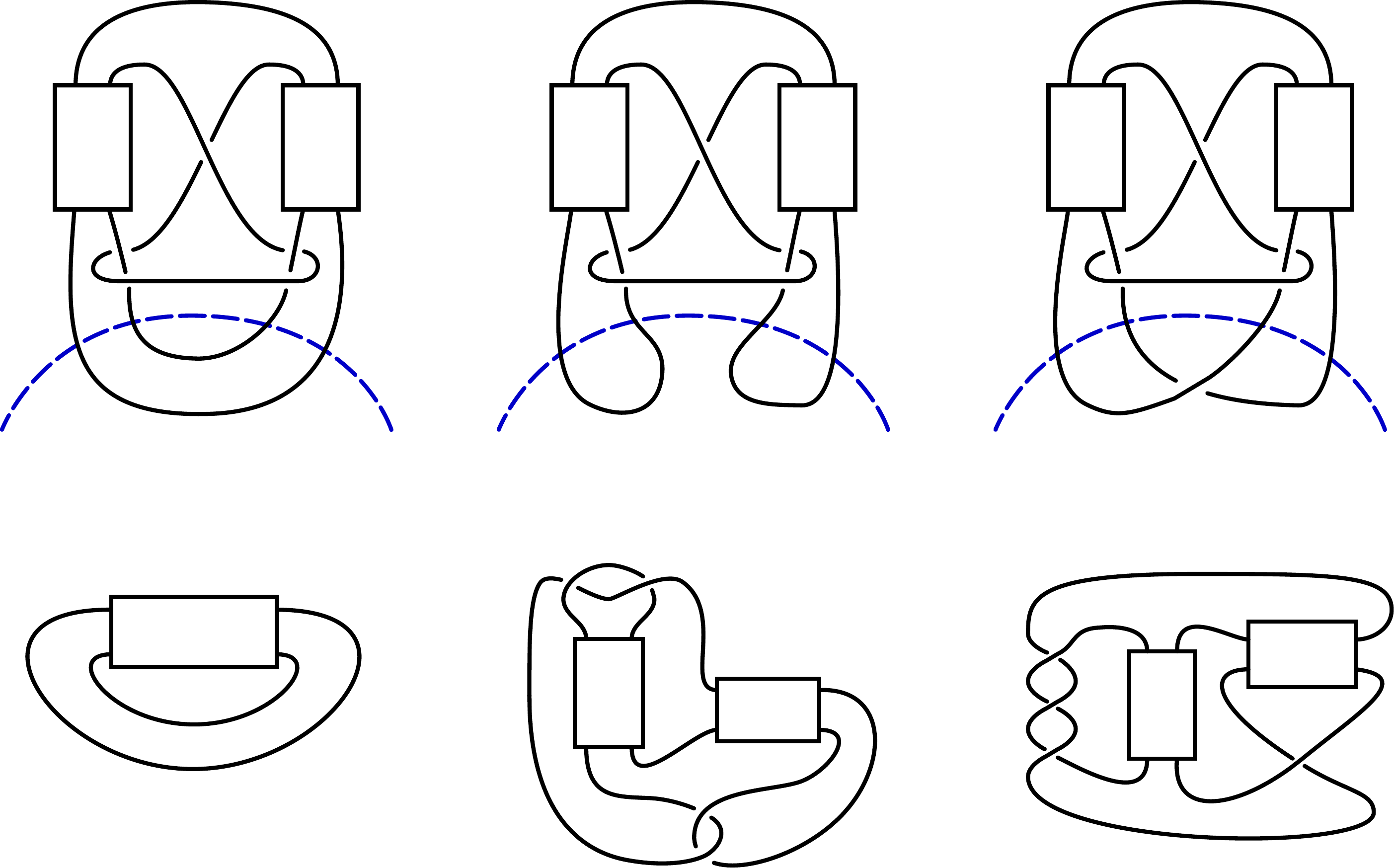}
\put(-344,186){$p$}
\put(-290,186){\small$q$+1}
\put(-213,186){$p$}
\put(-160,186){\small$q$+1}
\put(-85,186){$p$}
\put(-30,186){\small$q$+1}
\put(-330,58){$p$+$q$+1}
\put(-213,43){\small$p$-1}
\put(-170,40){-$q$}
\put(-32,54){\small$q$-1}
\put(-68,40){\small$p$-1}
\put(-315,93){\huge$\wr$}
\put(-188,93){\huge$\wr$}
\put(-55,93){\huge$\wr$}
\caption{The three fillings, 0, $\infty$, and 1, on $\mathcal T_{p,q}$ used to show that $X_{p,q}$ is not Seifert fibered.}
\label{fig:TpqFillings}
\end{figure}

If $X_{p,q}$ is a Seifert fibered space, then it has, for its base surface, either $D^2$ or $M^2$ (the M\"{o}bius band).  We will make use of Lemma \ref{SFfillings}.  If $X_{p,q}$ is Seifert fibered over the disk with more than two exceptional fibers it cannot have lens space fillings.  If $X_{p,q}$ has the form of $D^2(a)$, then it cannot have fillings with three exceptional fibers, so $X_{p,q}(-1)$ must be a lens space.  This implies that $p=2$ or $q=2$.  If $X_{p,q}$ has base surface $M^2$, then it can only have lens space fillings or fillings with at least three exceptional fibers, two of which have multiplicity two. Thus, we must have $p,q=2,3$.  So, assume $X_{p,q}=D^2(a,b)$.

In this case, $X_{p,q}$ has one reducible filling at slope $\gamma$ and the property that any lens space filling must be at distance one from $\gamma$.  By considering the three fillings given above, it follows that $\gamma = 0, \infty,$ or $\pm1$.  If $\gamma=-1$, then $X_{p,q}(-1)$ must be reducible, so $p=1$ or $q=1$, both of which are not allowed values.  If $\gamma=0$ or if $\gamma=\infty$, then $X_{p,q}(-1)$ must be a lens space, so $p=2$ or $q=2$.  Finally, if $\gamma=1$, then the filling $X_{p,q}(-1)$ is at distance two from the reducible filling, so it must have an exceptional fiber of multiplicity 2.  It follows that $p=3$ or $q=3$.
\end{proof}

We remark that the lemma could be strengthened to say that $X_{p,q}$ is non-Seifert fibered if and only if $p\not=1$ by showing that $X_{p,q}(1)$ is neither reducible, a lens space, or a small Seifert fibered space with finite fundamental group, as would need to be the case given the different Seifert fibered structures $X_{p,q}$ might have.  However, we will not need anything stronger than what we have proved.

\begin{proof}[Proof of Lemma \ref{lemma:2pq1}]

Suppose that $p\not=1$ and remove a ball around the $q$-twists of $\mathcal T_{p,q}(r)$ to form the tangle $\mathcal S_{p,r}$ (see Figure \ref{fig:SprWithFillings}).  Let $N_{p,r}$ denote the branched double cover of $\mathcal S_{p,r}$.  First, we will show that $N_{p,r}$ is hyperbolic.

We begin by showing some interesting fillings of $N_{p,r}$ (see Figure \ref{fig:SprWithFillings}).

$$\begin{array}{l} 
N_{p,r}(-1/q) = K_{p,q}(\alpha_r) \\ 
N_{p,r}(0) = D^2(1/2, -1/2)\cup_{T^2}D^2(-1/2,p/(2p-1)) \\
N_{p,r}(\infty) = T(2,2p+3)(\alpha_r) = S^2(-1/2, -1/(r+2), -2/(2p+3)) \\
N_{p,r}(-1) = K[-2p/(6p+1)](\alpha_r)
\end{array}$$

\begin{figure}
\centering
\includegraphics[scale = .43]{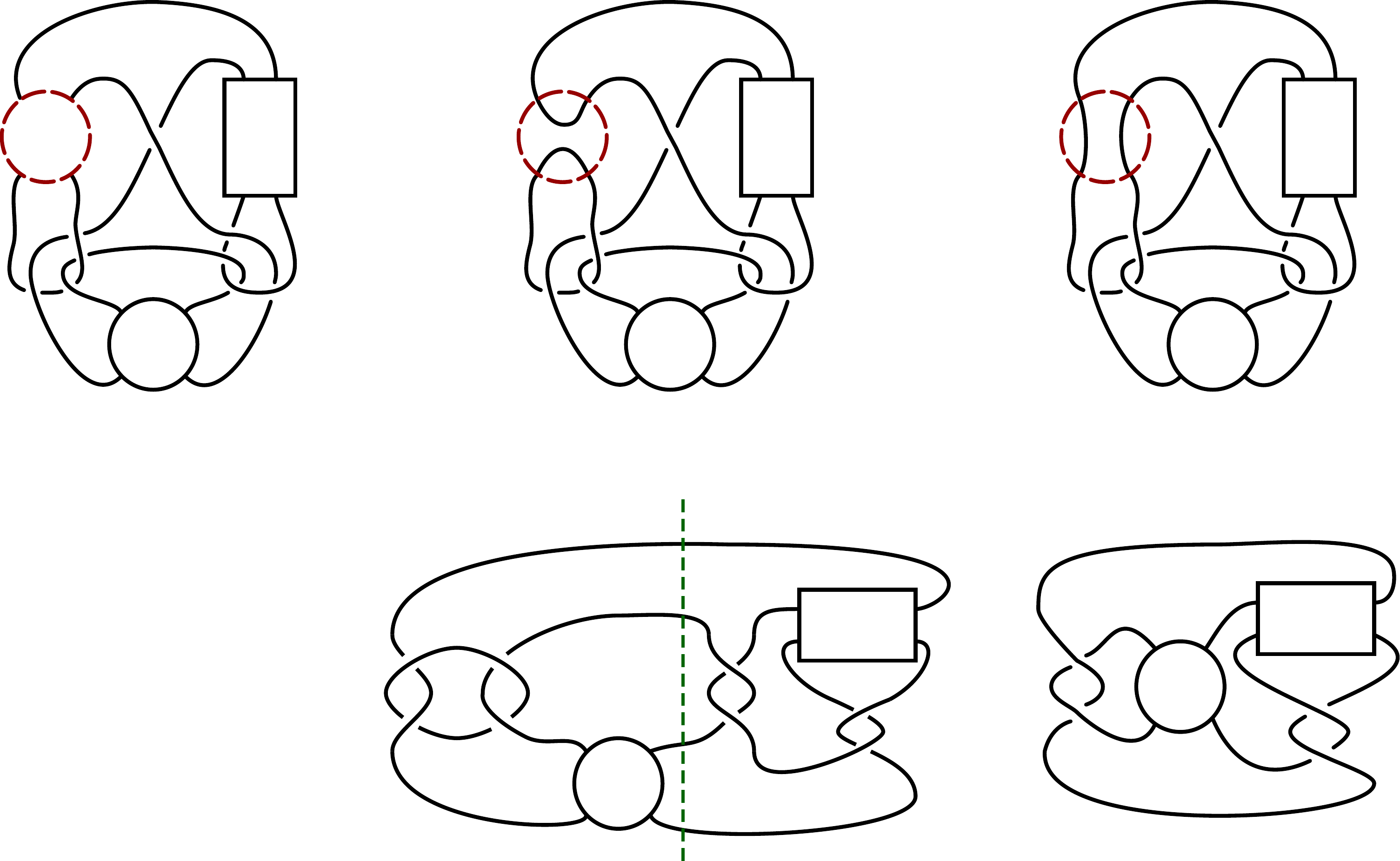}
\put(-308,191){$p$}
\put(-170,191){$p$}
\put(-24,191){$p$}
\put(-336,136){$r$}
\put(-199,136){$r$}
\put(-53,136){$r$}
\put(-32,61){-$p$-1}
\put(-150,61){-$p$}
\put(-212,20){$r$}
\put(-67,45){\small$\frac{-1}{r+2}$}
\put(-198,103){\huge$\wr$}
\put(-53,103){\huge$\wr$}
\caption{The tangle $\mathcal S_{p,r}$, along with two fillings, $\mathcal S_{p,r}(0)$ and $\mathcal S_{p,r}(\infty)$, and their equivalents after isotopy.}
\label{fig:SprWithFillings}
\end{figure}

We remark that $N_{p,r}(\infty)$ and $N_{p,r}(-1)$ correspond to ($\alpha_r$)-surgery on $K_{p,0}$ and $K_{p,-1}$, respectively.  The latter is a 2-bridge knot with no exceptional fillings (if $p\not=1$), according to the classification by Brittenham and Wu (\cite{britt-wu:2bridge}).  Thus, $N_{p,r}(-1)$ is hyperbolic if $p\not=1$.

Now, suppose that $N_{p,r}$ is not hyperbolic, so it must be reducible, $\partial$-reducible, Seifert fibered, or toroidal by geometrization.  However, $N_{p,r}$ cannot be reducible, since it has two distinct irreducible fillings at slopes 0 and $-1$ (this follows from the solution to the knot complement problem \cite{gordon-luecke}).  Similarly, it cannot be Seifert fibered, since it has a hyperbolic filling at slope $-1$.  It follows that $N_{p,r}$ cannot be $\partial$-reducible, since the only irreducible, $\partial$-reducible manifold with torus boundary is Seifert fibered, namely, the solid torus.

Finally, suppose that $N_{p,r}$ is toroidal.  If any essential torus were non-separating, then all fillings of $N_{p,r}$ would contain an essential non-separating torus, which is false here.  Suppose any essential torus is separating, and decompose $N_{p,r}$ along an outermost such torus, $F$, so that $N_{p,r} = A\cup_FB$ with $A$ atoroidal and $\partial N_{p,r}\subset B$.  If we assume, for a contradiction, that $N_{p,r}(-1/q)$ is small Seifert fibered for some $q$ with $|q|>8$, then we have that $F$ compresses in $B(-1)$, $B(\infty)$, and $B(-1/q)$.  It follows, from Lemma \ref{lemma:cable} that $B$ is a cable space with cabling slope $\gamma=0$.  But $N_{p,r}(0)$ is neither reducible nor a lens space, so we reach a contradiction.  It follows that $N_{p,r}$ is not toroidal, and must be hyperbolic.

Since $N_{p,r}$ is hyperbolic, and $N_{p,r}(\infty)$ is exceptional, it follows that for any exceptional filling $N_{p,r}(-1/q)$, $\Delta(\infty, -1/q)\leq 8$, by Theorem \ref{thm:lack-meyer}.  It follows that $|q|\leq 8$, as desired.  A similar argument shows that $|p|\leq 8$, as well.

\end{proof}

\begin{proof}[Proof of Lemma \ref{lemma:2pq2}]

We will proceed as in the lemma above by analyzing the tangle $\mathcal S_r = \mathcal S_{1,r}$ formed by removing a ball containing the $q$-twist region of knot $\mathcal T_{1,q}(r)$.  We will show that the branched double cover $N_r$ of $\mathcal S_r$ is hyperbolic.

Consider the following fillings on $N_r$ (see Figure \ref{fig:S1rFillings}).  Assume for a contradiction that $|q|\geq 9$ and $K_{p,q}(\alpha_r) = N_r(-1/q)$ is a small Seifert fibered space.

$$\begin{array}{l} 
N_{r}(-1/q) = K_{p,q}(\alpha_r) \\ 
N_{r}(0) = S^2(1/2, -1/2, 1/(2-r))\\
N_{r}(\infty) = S^2(1/2,-2/5,-1/(r+2)) \\
N_{r}(-1) = S^2(1/3,-1/4,-1/r)\\
N_r(-1/2) = S^2(-1/3,2/5,-1/(r-1))\\
N_r(1) = K[-2/7](\alpha_r)
\end{array}$$

\begin{figure}
\centering
\includegraphics[scale = .40]{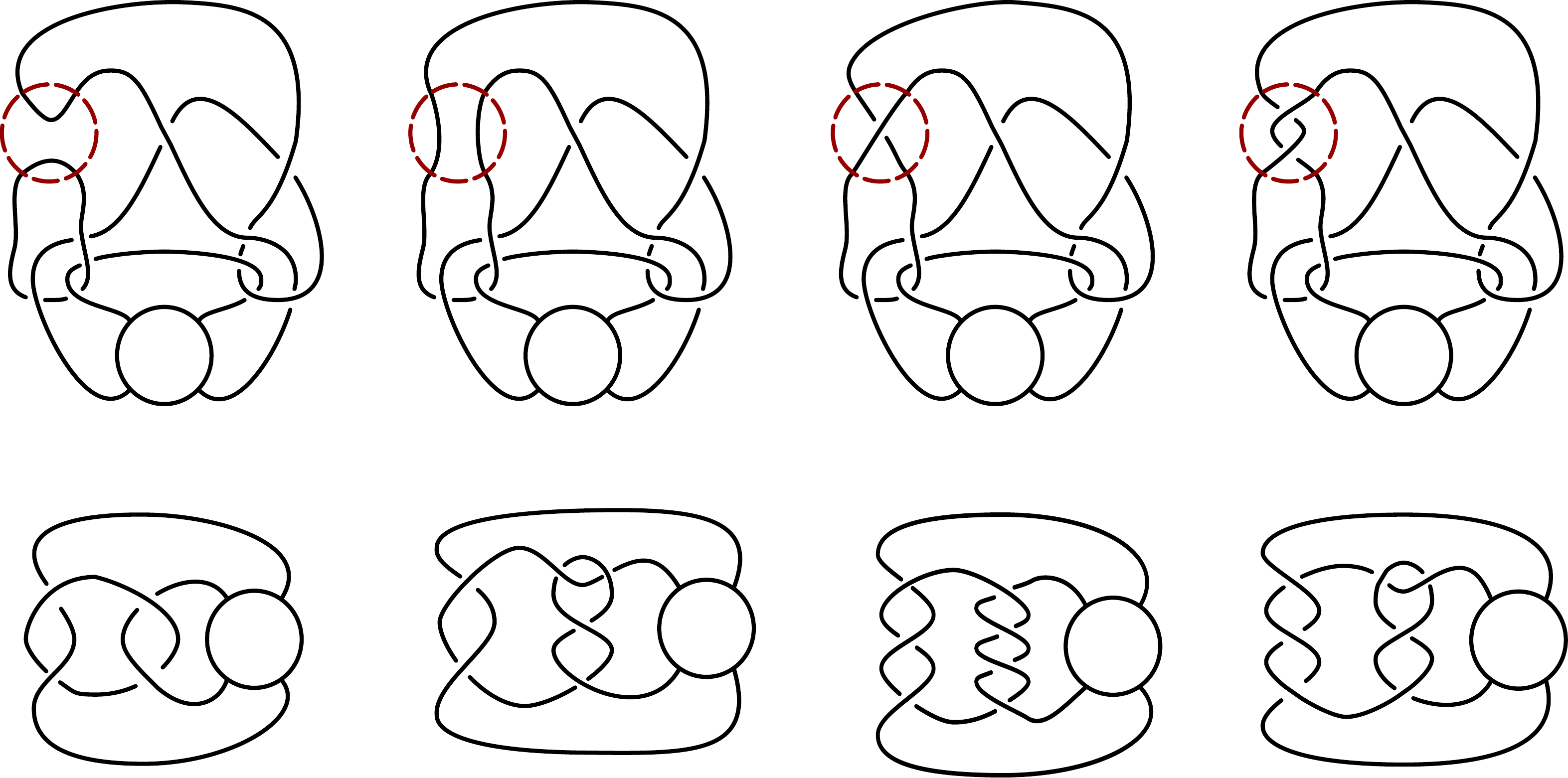}
\put(-328,96){$r$}
\put(-233,96){$r$}
\put(-41,96){$r$}
\put(-135,96){$r$}
\put(-312,30){\small$\frac{-1}{r-2}$}
\put(-208,32){\small$\frac{-1}{r+2}$}
\put(-112,28){-$\frac{1}{r}$}
\put(-19,30){\small$\frac{-1}{r-1}$}
\put(-328,70){\huge$\wr$}
\put(-232,70){\huge$\wr$}
\put(-136,70){\huge$\wr$}
\put(-42,70){\huge$\wr$}
\caption{Four fillings, 0, $\infty$, 1, and 1/2, of the tangle $\mathcal S_{r}$ that help to prove that $N_r$ is hyperbolic if $r\not=0,1,2$.}
\label{fig:S1rFillings}
\end{figure}

It is clear from this that $N_r$ is irreducible (again, by \cite{gordon-luecke}), since it has distinct irreducible fillings, for any value of $r$.  Suppose that $N_r$ is Seifert fibered.  Since, for all values of $r$, $N_r$ has fillings that are Seifert fibered with base surface $S^2$, but do not contain a pair of exceptional fibers of multiplicity 2, the base surface of $N_r$ is orientable, i.e., $D^2$.  A Seifert fibered space with connected boundary with a small Seifert fibered filling must have 2 or 3 exceptional fibers.  Furthermore, since no slope is distance one from 0, $\infty$, and $-1$, $N_r$ has 2 exceptional fibers, i.e., $N_r = D^2(a,b)$.

By the classification of exceptional surgeries on 2-bridge knots \cite{britt-wu:2bridge}, $N_r(1)$ is exceptional if and only if $\alpha_r\in\{0,1,2,3,4\}$.  In this case, $p=1$ and $q=-1$, so $\alpha_r=4-r$ and this is the equivalent to $r\in\{4,3,2,1,0\}$.  However, we already know that if $r=0,1,2$, then $N_r(-1/q)$ is exceptional, so we only need to consider $r=3,4$.  

If $r=3$, then, by considering $N_3(\infty)$, we see that $a=5$, and by considering $N_3(-1)$, we see that $a$ cannot be 5.  If $r=4$, then by considering $N_4(-1/2)$, we see that $a=3$, and by considering $N_4(\infty)$, we see this is impossible.  It follows that $N_r$ cannot be Seifert fibered if $r\not\in\{0,1,2\}$.

It follows that $N_r$ is non-Seifert fibered and, thereby, $\partial$-irreducible.  If $N_r$ were toroidal, since it has atoroidal fillings at distance two, it must be a cable space, by Lemma \ref{lemma:cable}.  However, the only cabling slope $\gamma$ that satisfies $\Delta(\gamma, -1/2)=1$ and $\Delta(\gamma, \infty)=1$ is $\gamma=0$, in which case we must have $N_r(0)$ be reducible or a lens space.  So, we must have $r=3$ and $N_3(0)$ is a lens space.  Suppose $N_3 = A\cup_{T^2}B$ with $A$ atoroidal and $\partial N_3\subset B$.  

Then, because $B(\alpha)=S^1\times D^2$ for $\alpha\in\{-1/q,\infty,-1,-1/2,1\}$ each of these fillings induces a filling $\eta_\alpha$ on $A$.  Since $B(0)=(S^1\times D^2)\#L$ for some lens space $L$, and since $N_3(0)$ is a lens space, it follows that $A(\eta_0)=S^3$, so $A$ is a knot complement.  By construction, $A$ is atoroidal.  If $A$ were Seifert fibered, then, by considering $A(\eta_\infty)=N_3(\infty)$ and $A(\eta_{-1})=N_3(-1)$ just as before, we reach a contradiction.

It follows that $N_r$ must be hyperbolic (for $r\not\in\{0,1,2\}$).  An application of Theorem \ref{thm:lack-meyer} gives us that $\Delta(-1/q, \infty)\leq 8$ if $N_r(-1/q)$ is a small Seifert fibered space, which proves the lemma.

\end{proof}

\subsection{Completing the proof of Theorem \ref{thm:main1}}

The work above leaves us with a finite list of knots and links $L_{p,q,r}=\mathcal T_{p,q}(r)$ whose branched double covers might be Seifert fibered.  We must consider non-integral $r$ if and only if $p$ and $q$ are both positive.  In the event of a non-integral slope $r/s$, we may assume $|s|\leq 8$ by Theorem \ref{thm:lack-meyer}, since $1/0$ is an exceptional filling.  By Proposition \ref{prop:seifertormont}, we must show that each of these links is not a Montesinos link or a Seifert link.

Method 1 (see Subsection \ref{subsec:knottheory}) can be used to show that none of the $L_{p,q,r}$ are Montesinos knots or links, though it should be noted that the Kauffman polynomial must be employed in a handful of cases, including distinguishing $L_{1,4,-1}$ from $K[1/3, 2/5, -2/5]$ and some non-integral cases, and that the HOMFLYPT polynomial must be employed to distinguish $L_{1,6,-2}$ from $K[-1/4,1/6,2/7]$.  In other words, these pairs are not distinguished by their Alexander polynomial and Khovanov homology alone.

Now, consider when $L_{p,q,r}$ is a link.  Then it is the union of the unknot and the 2-bridge knot $K[n/m]$, where $n=2pq-p-2$ and $m=4pq-2p-2q-3$.  $K[n/m]$ is a torus knot only if $p=1$ and $q=3$ or 4.  In the latter case, $L_{1,4,r}$ is the union of a trefoil and an unknot.  If this link is to be a Seifert link, the unknotted component must lie as the core of the torus upon which the trefoil sits.  However, the link just described can be distinguished from $L_{1,4,r}$ for all values of $r$ using the Jones polynomial.  Concerning $L_{1,3,r}$, exceptional surgeries on the knot $P(-2,3,7)$ are previously well-understood \cite{eudave-munoz}.

It only remains to show that $L_{p,q,r}$ is never a torus knot.  If $p\not=1$, this is accomplished by applying Criterion \ref{crit:positive}.  For $p=1$, Criterion \ref{crit:det} suffices.


\section{The case of $(3,3,|q_3|)$}\label{section:33q}

We now turn our attention to pretzel knots $P(q_1, q_2, q_3)$ such that $|q_1|=|q_2|=3$.  The case of $P(3,3,q_3)$, where $q_3>0$ was handled by Ichihara and Jong in \cite{ichi-jong_pqq}.  We break up the remaining cases as follows.
\begin{enumerate}
\item $P(3,\pm3, -2m)$ with $m\geq 1$
\item $P(3,3, 2m+1)$ with $ m\leq -2$
\item $P(3,-3, 2m+1)$ with $m\leq -3$ or $2\leq m$
\item $P(3,3,2m,-1)$ with $2\leq m$
\end{enumerate}

In Cases (2) and (3), it is only necessary to consider integral surgery slopes by Theorem \ref{thm:non-integral}.  Our main result is:

\begin{reptheorem}{thm:main2}
Hyperbolic pretzel knots of the form $P(3,3,m)$ or $P(3,3,2m,-1)$ admit no small Seifert fibered surgeries.  Pretzel knots of the form $P(3,-3,m)$, with $m>1$, admit small Seifert fibered surgeries precisely in the following cases:
\begin{itemize}
\item $P(3,-3, 2)(1) = S^2(1/3,1/4,-3/5)$
\item $P(3,-3, 3)(1) = S^2(1/2,-1/5,-2/7)$
\item $P(3,-3, 4)(1) = S^2(-1/2,1/5,2/7)$
\item $P(3,-3, 5)(1) = S^2(2/3,-1/4,-2/5)$
\item $P(3,-3, 6)(1) = S^2(1/2,-2/3,2/13)$
\end{itemize}
\end{reptheorem}

\subsection{Case (1)}

Let $K^\pm_m=P(3,\pm3,-2m)$.  To avoid the redundancy of mirrors, we can restrict to $m>0$.  Recall that these knots can only admit non-integral small Seifert fibered surgeries in the case of $K^+_m$.  Our first result is half of Theorem \ref{thm:main2} (up to mirroring).

\begin{figure}
\centering
\includegraphics[scale = .6]{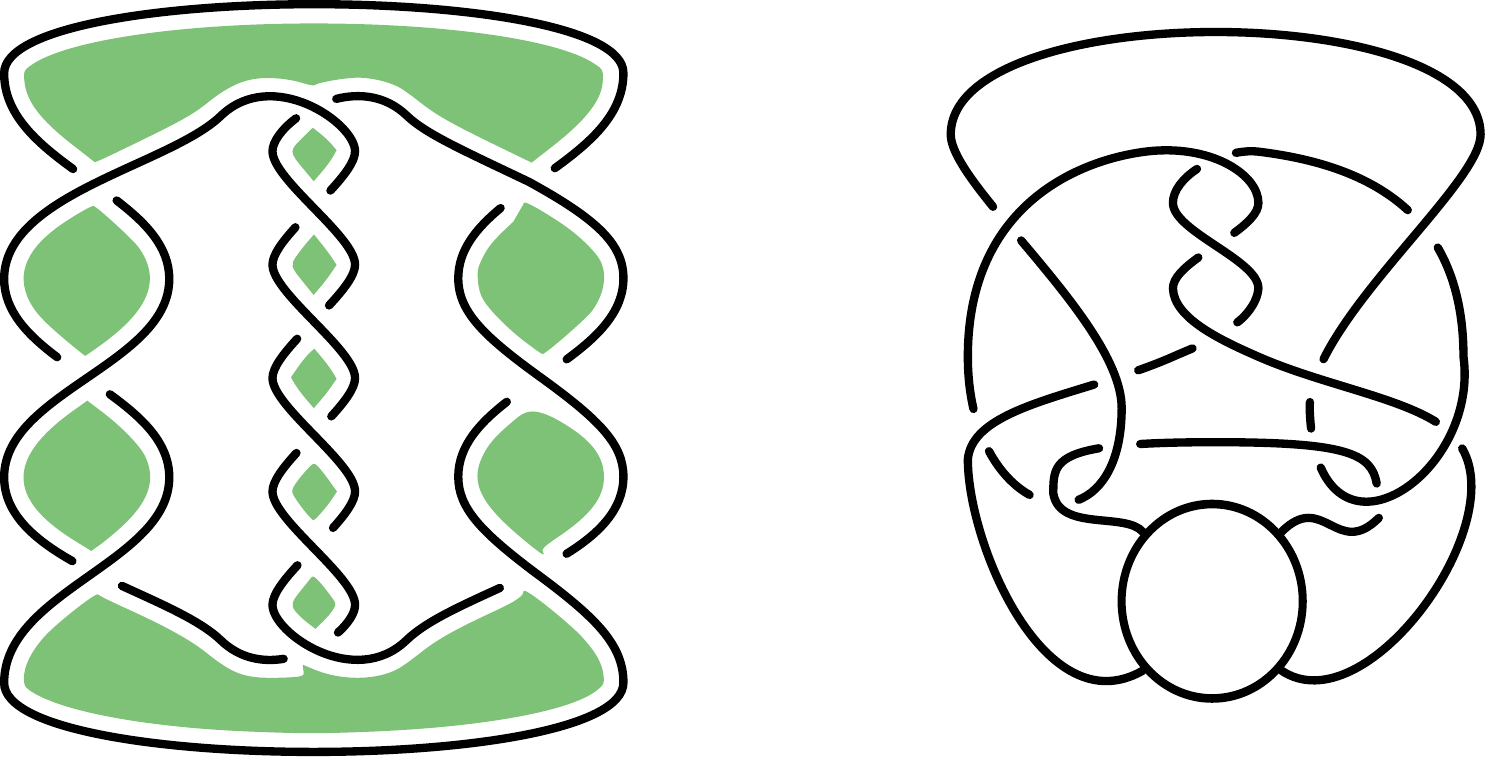}
\put(-210,-15){(a)}
\put(-55,-15){(b)}
\caption{(a) The knot $K_m^-=P(3,-3,-2m)$ (shown here with $m=3$) bound punctured Klein bottles.  (b) The tangle $\mathcal T^+_3$.}
\label{figure:P(3,-3,m)KleinBottleAndTangle}
\end{figure}

\begin{proposition}\label{prop:(3,3,2m)}
Let $K^\pm_m=P(3,\pm3,-2m)$ with $m>0$ be hyperbolic.  Then, $K^\pm_m$ admits no small Seifert fibered surgeries, except in the following three instances.
\begin{itemize}
\item $P(3,-3,-2)(-1) = S^2(2/3,-1/4,-2/5)$
\item $P(3,-3,-4)(-1) = S^2(1/2,-1/5,-2/7)$
\item $P(3,-3,-6)(-1) = S^2(1/2,-1/3,-2/13)$
\end{itemize}
\end{proposition}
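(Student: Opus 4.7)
The plan is to mimic the strategy used in Section \ref{section:2pq} for Theorem \ref{thm:main1}: exploit the symmetries of $K^\pm_m$ to describe the relevant surgeries as branched double covers of rational-tangle fillings of a quotient tangle, use a priori exceptional-surgery bounds to cut the range of $m$ down to a finite list, and then eliminate the remaining candidates using the invariant criteria from Subsection \ref{subsec:knottheory}. Both $K^+_m = P(3,3,-2m)$ and $K^-_m = P(3,-3,-2m)$ admit natural involutions: a cyclic $\pi$-rotation exchanging the two $3$-tangles in the $K^+_m$ case, and a strong inversion for $K^-_m$. Applying the Montesinos (or factor-knot) trick yields explicit tangles $\mathcal{T}^\pm_m$, and the three exceptional surgeries on $P(3,-3,-2m)$ claimed in the statement are verified directly by identifying $\mathcal{T}^-_m(r)$, for the appropriate integer $r$, with the Montesinos links $K[2/3,-1/4,-2/5]$, $K[1/2,-1/5,-2/7]$, and $K[1/2,-1/3,-2/13]$ after a short isotopy. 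By Theorem \ref{thm:non-integral}, only integral slopes need be considered for $K^-_m$, whereas for $K^+_m$ non-integral slopes $r/s$ must be addressed as well, though Theorem \ref{thm:lack-meyer} will again force $|s|$ to be bounded.

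To bound $m$, I would fix the filling on the rest of the tangle and remove a ball around the $(-2m)$-twist region, forming an auxiliary tangle $\mathcal{S}^\pm_r$ whose branched double cover $N^\pm_r$ contains $K^\pm_m(\alpha)$ as a $-1/m$-filling. The key technical step is to show $N^\pm_r$ is hyperbolic, by ruling out each competing geometry in turn. Irreducibility follows from the existence of distinct irreducible fillings via \cite{gordon-luecke}; non-Seifert-fiberedness is forced by identifying one small-$m$ filling with the exterior of a 2-bridge knot of the form $P(3,\pm 3,\pm 1)$ and invoking the Brittenham--Wu classification \cite{britt-wu:2bridge} to exhibit a hyperbolic filling; $\partial$-irreducibility is then automatic; and atoroidality follows from Lemma \ref{lemma:cable}, since the presence of three distinct lens-space or small-Seifert-fibered fillings at mutually bounded distance would force $\mathcal{S}^\pm_r$'s outer piece to be a cable space whose cabling slope must simultaneously be compatible with each of those fillings --- a system of constraints I expect to have no solution. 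Since $K^-_m$ bounds a punctured Klein bottle (Figure \ref{figure:P(3,-3,m)KleinBottleAndTangle}(a)), a toroidal surgery is already available, so once hyperbolicity of $N^\pm_r$ is established, Theorem \ref{thm:lack-meyer}, sharpened where possible by Theorems \ref{BGZ:weak} and \ref{thm:BGZgenusone}, confines $m$ (and in the $K^+_m$ case, $s$ as well) to a small finite range.

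Once $m$ has been bounded, the residual task is to show, for each remaining pair $(m,r)$ outside the three exceptional cases, that the quotient link $L^\pm_{m,r} := \mathcal{T}^\pm_m(r)$ is neither a length-three Montesinos link nor a Seifert link, so that Proposition \ref{prop:seifertormont} yields the contradiction. For the Montesinos obstruction, I would first apply Criterion \ref{crit:width} whenever the Khovanov width is at least $4$, and otherwise use Method 1 to compare $L^\pm_{m,r}$ against the finite list of length-three Montesinos links of no greater crossing number, using the determinant and the Alexander, Jones, and Khovanov invariants, falling back on the Kauffman or HOMFLYPT polynomial in the borderline cases. For the Seifert obstruction, Criteria \ref{crit:positive}, \ref{crit:det}, and \ref{crit:torus} should dispose of almost all cases; any residual two-component Seifert-link candidate is forced to be a torus knot together with a core of its torus, and this specific link can be distinguished from $L^\pm_{m,r}$ by the Jones polynomial. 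The main obstacle I anticipate is the hyperbolicity argument for $N^\pm_r$: because several of its fillings are genuinely small Seifert fibered, excluding the cable-space alternative in Lemma \ref{lemma:cable} requires a careful case-by-case tally of candidate cabling slopes against the observed fillings at $0$, $\infty$, and $\pm 1$, and this bookkeeping will need to be adapted separately to the $K_m^+$ and $K_m^-$ settings since the symmetry type, and hence the ambient tangle, differs between them.
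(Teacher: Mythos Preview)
Your overall strategy matches the paper's, but two of your specific choices diverge from what the paper actually does, and one of them creates a genuine inconsistency in your outline.

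First, the paper uses the \emph{strong inversion} on both $K^+_m$ and $K^-_m$, not a period-$2$ rotation for $K^+_m$. Your proposal to use the cyclic $\pi$-rotation on $P(3,3,-2m)$ would take you out of the Montesinos-trick framework entirely: the quotient is then a factor-knot picture (as in Case~(3) for $P(3,-3,2m+1)$), and Proposition~\ref{prop:seifertormont} no longer applies, so your endgame of ``show $L^\pm_{m,r}$ is neither Montesinos nor Seifert link'' does not go through as stated for $K^+_m$. Using the strong inversion for both signs keeps the analysis uniform and is what the paper does.

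Second, your mechanism for proving $N^\pm_r$ non-Seifert-fibered---a hyperbolic $2$-bridge filling of the form $P(3,\pm 3,\pm 1)$ via Brittenham--Wu---does not arise here: the natural fillings of the auxiliary tangle do not correspond to surgeries on $2$-bridge knots. The paper instead computes directly that $M_r^\pm(\infty) = D^2(2,3)\cup_{T^2}D^2(2,3)$, a non-Seifert-fibered graph manifold; this single filling simultaneously rules out Seifert-fiberedness of $M_r^\pm$ and, being toroidal, anchors the distance bound $|m|\le 8$. For $M_r^-$ the paper also uses the reducible filling $M_r^-(0)=(S^1\times S^2)\#L(r,1)$, and for $M_r^+$ the fillings $M_r^+(0)=D^2(2,2)\cup_{T^2}D^2(2,r)$ and $M_r^+(1)=S^2(1/3,-1/4,-1/r)$; the cable-space exclusion in Lemma~\ref{lemma:cable} then runs against these specific spaces rather than against three small-Seifert fillings as you suggest. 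Finally, the link case is handled more cleanly than you indicate: the non-unknotted component of $L^\pm_{m,r}$ is identified as the length-three Montesinos knot $J_m^\pm = K[2/3,\pm 2/3,-1/2m]$, which is never a torus knot or a $2$-bridge knot, so Criteria~\ref{crit:2bridge} and~\ref{crit:torus} dispose of both obstructions at once without any Jones-polynomial comparison.
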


As in Section \ref{section:2pq}, we will proceed in this case by first limiting the possible surgery slopes, then limiting the size of $m$, then using techniques from Subsection \ref{subsec:knottheory} to check that small values of $m$ and slopes satisfying the relevant bound do not produce small Seifert fibered spaces (except for the three noted cases).  Let $K^\pm_m=P(3,\pm3,-2m)$ with $m>0$.  We begin by observing that $K^\pm_m$ bounds a punctured Klein bottle.  Let $\alpha_r^+=12-r$ and $\alpha_r^-=-r$ (again, these are chosen so that $K_m^\pm(\alpha_r)$ corresponds to $r$-filling on the corresponding tangle).  Then this Klein bottle has boundary slope $\alpha^\pm_0$ (see Figure \ref{figure:P(3,-3,m)KleinBottleAndTangle}(a)).  Since surgery along this slope produces a toroidal manifold (as in the previous section), any exceptional surgery slope for $K^\pm_m$ must be close to $\alpha^\pm_0$.  In particular, $\Delta(\alpha_r^\pm,\alpha_0^\pm)\leq 8$.

Next, we remark that $K^\pm_m$ is strongly invertible.  Let $\mathcal T^\pm_m$ be the resulting quotient tangle, and let $L^\pm_{m,r}=\mathcal T^\pm_m(r)$.


\begin{lemma}
Suppose $K^\pm_m(\alpha_r)$ is a small Seifert fibered space.  Then, $m\leq 8$.
\end{lemma}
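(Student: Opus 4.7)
The plan is to adapt the strategy used in the proof of Lemma \ref{lemma:2pq1} to the present setting. Namely, I would isolate the $(-2m)$-twist region of the quotient tangle and study its complementary tangle, showing that its branched double cover is hyperbolic. Once hyperbolicity is established, Theorem \ref{thm:lack-meyer} immediately provides the desired bound on $m$.

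More precisely, first I would remove a ball encircling the $(-2m)$-twist region of $\mathcal{T}^\pm_m(r)$ to form a tangle $\mathcal{S}^\pm_r$ that depends only on the surgery parameter $r$. Let $N^\pm_r$ denote its branched double cover. By construction, $N^\pm_r$ has a single torus boundary component, and under the natural slope identification $K^\pm_m(\alpha_r) = N^\pm_r(-1/m)$, while the filling at slope $\infty$ corresponds to surgery on a simpler knot (obtained by setting $m=0$ in the family, yielding either a $2$-bridge knot or a torus knot whose exceptional surgeries are classified by \cite{britt-wu:2bridge} and elementary means). I would then compute three or four additional specific rational fillings of $N^\pm_r$ — most naturally slopes $0$, $-1$, and possibly $-1/2$ — which identify with $\alpha_r$-surgeries on simpler pretzel or two-bridge knots, each of whose homeomorphism type (Seifert fibered space, lens space, or hyperbolic exceptional) can be read off directly from the tangle picture as in Figure \ref{fig:SprWithFillings}.

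The meat of the argument is then to verify that $N^\pm_r$ is hyperbolic. I would rule out the three non-hyperbolic possibilities in turn, following exactly the template of Lemma \ref{lemma:2pq1}. Irreducibility and non-$\partial$-reducibility follow from the existence of two distinct irreducible fillings, invoking \cite{gordon-luecke}. The Seifert fibered case is obstructed by producing a single hyperbolic filling, which I expect to obtain at slope $-1$ (or another small integer) by appealing to the $2$-bridge classification of Brittenham and Wu. The toroidal case is the hard part: if $N^\pm_r$ contained an essential torus, then since any such torus must be separating (an essential non-separating torus would persist in all fillings, contradicting the explicit small-Seifert-fibered fillings exhibited), Lemma \ref{lemma:cable} would force the outermost piece containing $\partial N^\pm_r$ to be a cable space with cabling slope $\gamma$ satisfying $\Delta(\gamma,\alpha)=1$ for each of the three or four fillings computed. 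One checks that no slope $\gamma$ meets this compatibility constraint with the fillings $0$, $\infty$, $-1$ simultaneously, or else forces the cable filling $N^\pm_r(\gamma)$ to be reducible or a lens space, which can be excluded directly from the computed fillings.

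Once $N^\pm_r$ is hyperbolic, Theorem \ref{thm:lack-meyer} applied to the exceptional pair of slopes $\infty$ and $-1/m$ gives $\Delta(\infty,-1/m) = m \leq 8$. I expect the main obstacle to be the toroidal case of $N^\pm_r$ — specifically, arranging the computed fillings so that no compatible cabling slope survives in either the $+$ or $-$ variant — and, to a lesser extent, handling the two sign choices $\pm$ uniformly, since the tangles $\mathcal{S}^+_r$ and $\mathcal{S}^-_r$ have slightly different filling profiles and may require separate but parallel verifications. The three exceptional surgeries listed in Proposition \ref{prop:(3,3,2m)} fall outside the scope of this lemma because they correspond to $m \leq 3$, which is consistent with the bound we aim to prove.
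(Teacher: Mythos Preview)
Your overall strategy---isolate the twist region, show the branched double cover $N^\pm_r$ is hyperbolic, then invoke Theorem~\ref{thm:lack-meyer}---matches the paper's. However, your expectations about the specific fillings are wrong, and this creates real gaps in the argument.

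First, the $\infty$-filling is not surgery on a $2$-bridge or torus knot. Setting $m=0$ in $P(3,\pm3,-2m)$ produces $P(3,\pm3,0)$, which is a link, not a knot; the tangle filling $\mathcal S^\pm_r(\infty)$ does not correspond to surgery on a knot in $S^3$ at all. The paper computes directly from the tangle picture that $M^\pm_r(\infty) = D^2(2,3)\cup_{T^2}D^2(2,3)$, an irreducible toroidal graph manifold. Likewise, none of the small fillings $-1,-1/2,\ldots$ land on a $2$-bridge knot (they correspond to $P(3,\pm3,\pm2)$, $P(3,\pm3,\pm4)$, etc.), so your plan to obtain a hyperbolic filling via Brittenham--Wu does not go through. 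The paper instead rules out the Seifert fibered case by pointing to this single irreducible, non-Seifert-fibered filling $M^\pm_r(\infty)$.

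Second, and more seriously, because $M^\pm_r(\infty)$ is itself toroidal, you cannot run the cable-space argument the way you describe: an essential torus $F\subset M^\pm_r$ need not compress in $M^\pm_r(\infty)$, so Lemma~\ref{lemma:cable} does not apply to that filling automatically. The paper splits into two cases. If $F$ compresses in $M^\pm_r(\infty)$, then Lemma~\ref{lemma:cable} forces a cable space with cabling slope $\gamma=0$, and the reducible filling $M^-_r(0) = (S^1\times S^2)\#L(r,1)$ is used to derive a contradiction (since $S^1\times S^2$ cannot arise as a lens-space summand from a cable filling, nor as a filling of the atoroidal piece $A$). If $F$ remains incompressible in $M^\pm_r(\infty)$, it must be the JSJ torus of the graph manifold, so $A=D^2(2,3)$ is a trefoil exterior; one then argues with the $0$-filling (and Lemma~\ref{lemma:cable} applied to the pair $0$, $1/m$) that neither $A$ nor $B$ can have $S^1\times S^2$ as a filling or summand. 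The $+$ case uses $M^+_r(0)=D^2(2,2)\cup_{T^2}D^2(2,r)$ and $M^+_r(1)=S^2(1/3,-1/4,-1/r)$ in an analogous way. You anticipated that the toroidal case would be the main obstacle, and it is---but the mechanism is this two-case analysis driven by the graph-manifold $\infty$-filling and the reducible $0$-filling, not a straightforward cabling-slope exclusion.
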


\begin{proof}

Let $L^\pm_{m,r}=\mathcal T^\pm_m(r)$ and form the tangle $\mathcal S^\pm_r$ by removing a 3-ball containing the $m$-twist box of $L^\pm_{m,r}$ (see Figure \ref{figure:P(3,-3,m)MrAndLr(m)}).  Let $M^\pm_r = \widetilde{S^\pm_r}$.  Of course, $\mathcal S^\pm_r(-1/m) = L^\pm_{m,r}$, and $K^\pm_m(\alpha_r) = M^\pm_r(1/m)$.  Assume, for a contradiction, that $M^\pm_r(1/m)$ is a small Seifert fibered space for some $m\geq 9$.

\begin{figure}
\centering
\includegraphics[scale = .6]{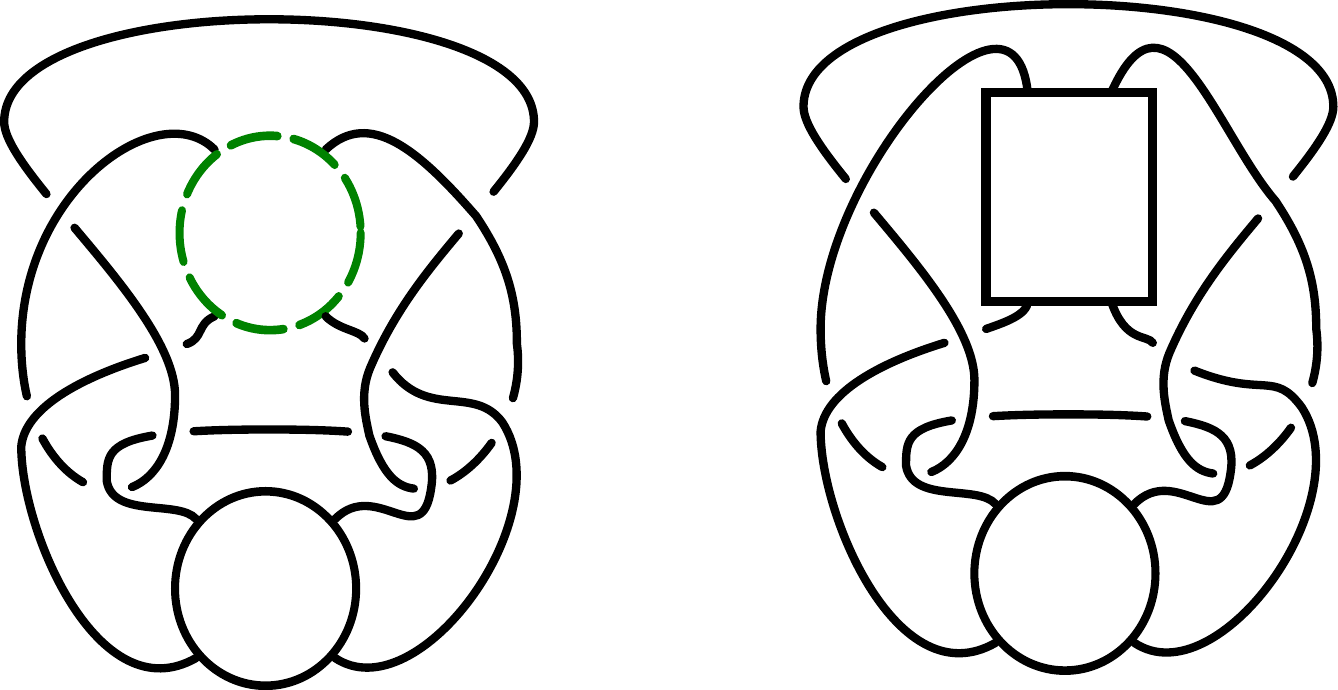}
\put(-55,84){\large-$m$}
\put(-190,14){\large$r$}
\put(-52,16){\large$r$}
\caption{The tangle $\mathcal S_r^-$ and the link $L_{m,r}^-$}
\label{figure:P(3,-3,m)MrAndLr(m)}
\end{figure}

Consider the following fillings of $M^\pm_r$, which we can easily visualize and verify by looking at the corresponding rational tangle fillings of $\mathcal S^\pm_r$ (see Figure \ref{figure:P(3,-3,2m)MrFillings}).
$$\begin{array}{rcl} 
M_r^\pm(1/m) & =& \text{small Seifert fibered space (by assumption)}\\
M_r^-(0)    & = &(S^1\times S^2)\#L(r,1) \\
M_r^\pm(\infty)  & =& D^2(2,3)\cup_{T^2}D^2(2,3) 
\end{array}$$

\begin{figure}
\centering
\includegraphics[scale = .6]{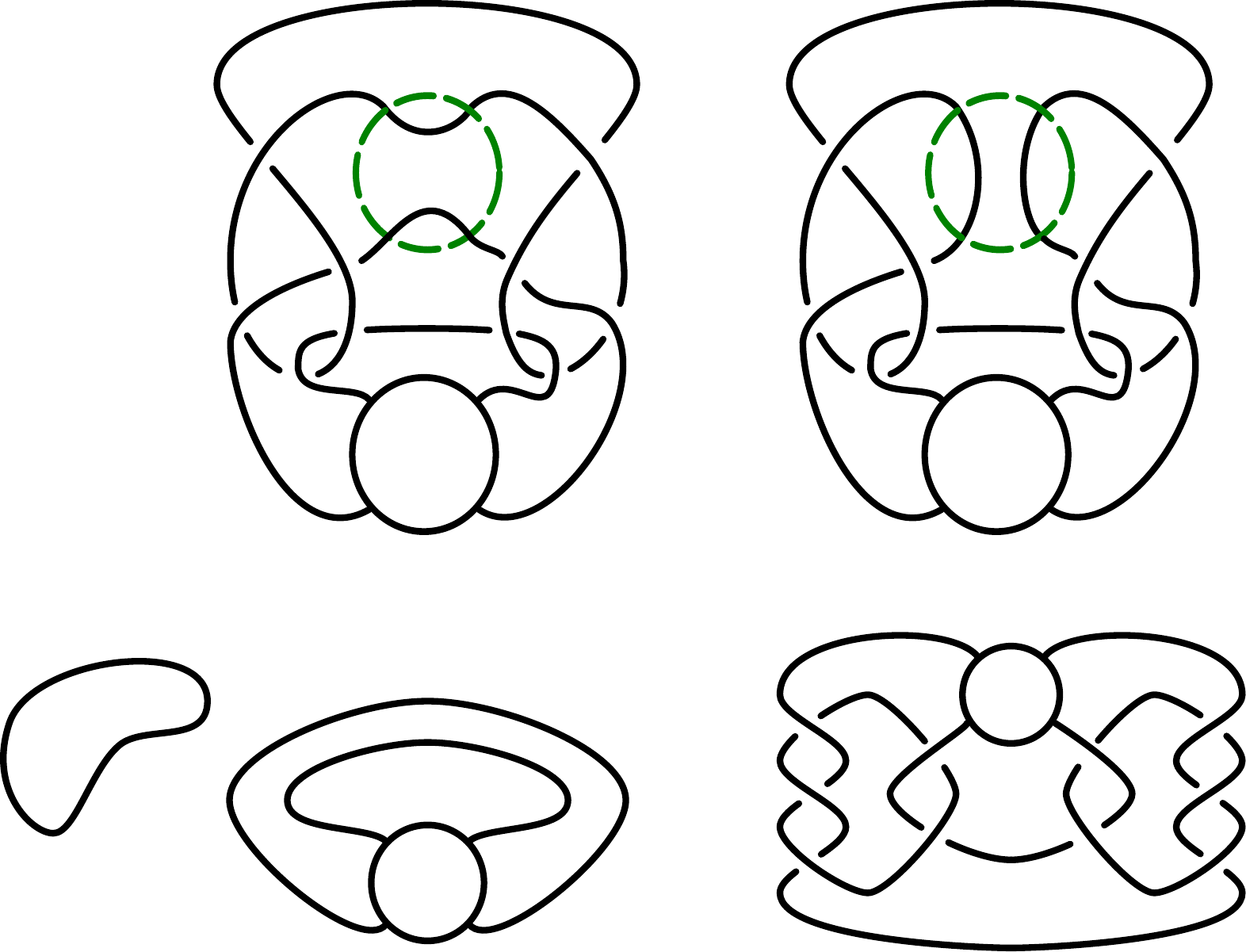}
\put(-183,105){\large$r$}
\put(-58,105){\large$r$}
\put(-183,70){\LARGE$\wr$}
\put(-55,75){\LARGE$\wr$}
\put(-182,13){$r$}
\put(-54,54){$r$}
\caption{Two interesting fillings of $\mathcal S_r^-$}
\label{figure:P(3,-3,2m)MrFillings}
\end{figure}

As was argued in Section \ref{section:2pq}, $M^\pm_r$ is irreducible (it has distinct irreducible fillings), non-Seifert fibered (it has a non-Seifert fibered, non-reducible filling), and $\partial$-irreducible (it is irreducible and not $S^1\times D^2$).  Assume that $M^-_r$ is toroidal, so $M^-_r = A\cup_F B$ with $A$ atoroidal and $\partial M^-_r\subset B$.

Suppose that $F$ compresses in $M^-_r(\infty)$.  Then, since $\Delta(1/m,\infty) = |m|\geq 2$, $B$ is a cable space with cabling slope $\gamma$ satisfying $\Delta(\gamma, \infty) = \Delta(\gamma, 1/m) = 1$.  It follows that $\gamma=a\in\Z$, and $|ma-1|=1$.  Since $|m|\geq 9$, $a$ must be zero, so $\gamma=0$.  It follows that $B(0) = (S^1\times D^2)\#L$, where $L$ is a lens space, and $B(\infty) = B(1/m) = S^1\times D^2$.  Let $\eta_0,\eta_\infty,$ and $\eta_{1/m}$, be the slopes of the induced slopes of the meridian of $B$ after the above fillings are performed, so $M^-_r(\alpha) = A(\eta_\alpha)$ for $\alpha =\infty$ or $1/m$, and $M^-_r(0) = A(\eta_0)\#L$.  Since $L$ has finite fundamental group, $L = L(r,1)$ and $A(\eta_0) = S^1\times S^2$.

Now, since $\Delta(\eta_\infty, \eta_{0})\geq 4$ (by Lemma \ref{lemma:cable}), $A$ cannot be hyperbolic, because $\Delta(S^2,T^2)=3$.  Since $A$ was assumed to be atoroidal, it follows that $A$ must be Seifert fibered.  But $A(\eta_\infty)$ is irreducible and not Seifert fibered, so this cannot be.  This contradiction means that $F$ cannot compress in $M^-_r(\infty)$.

So, assume $F$ remains incompressible in $M^-_r(\infty)$.  If $|r|=1$, then $M_1^-(0) = S^1\times S^2$.  But since $F$ is the unique incompressible torus in a non-Seifert fibered graph manifold, $A=D^2(2,3)$.  In particular, $M^-_1(0) = A(\eta_0) = S^1\times S^2$ is not a possible filling of a trefoil complement.

If $|r|>1$, then since $F$ compresses in fillings at slopes $1/m$ and 0, which are at distance one, by Lemma \ref{lemma:cable}, $B$ is either a cable space or the exterior of a braid in a solid torus.  Since $M^-_r(0) = (S^1\times S^2)\#L(r,1)$, either $B(0)$ or $A(\eta_0)$ is $S^1\times S^2$.  However, this is not possible for such spaces $B$, nor is it possible for $A = D^2(2,3)$.

Thus, $M^-_r$ is not toroidal, and must be hyperbolic.  So, by Theorem \ref{BGZ:weak}, $\Delta(1/m, \infty) = |m|\leq 8$, a contradiction that yields the desired result.  The reasoning is very similar to show that  $M_r^+$ must be hyperbolic as well, noting that $M_r^+(0) = D^2(2,2)\cup_{T^2}D^2(2,r)$ is toroidal for all $r$, and $M_r^+(1) = S^2(1/3, -1/4, -1/r)$.

\end{proof}

\subsection{Completing the proof of Proposition \ref{prop:(3,3,2m)}}

By our work above, we can conclude that if $P(3,\pm 3, -2m)(\alpha^\pm_r)$ with $m>0$ admits a small Seifert fibered surgery, then $|r|,|m|\leq 8$.  Let $L^\pm_{m,r}=\mathcal T^\pm_{m}(r)$.  We assume that $r$ is integral for $L^-_{m,r}$.

First, consider the links $L_{m,r}^\pm$, which are the union of an unknotted component with a component $J_m^\pm = K[2/3, \pm 2/3, -1/2m]$ (to see this, consider $L^\pm_{m,0}$, or compare with Figure \ref{figure:P(3,3,2n,-1)Lf}).  Because $J_m^\pm$ is not a torus knot or a 2-bridge knot for any $m$,  by Criteria \ref{crit:2bridge} and \ref{crit:torus}, we can conclude that $L_{m,r}^\pm$ is never a 2-component Seifert link or Montesinos link.

When $L_{m,r}^\pm$ is a knot, we see that $2g(L_{m,r}^\pm)\not=s(L_{m,r}^\pm)$, so, by Criterion \ref{crit:positive}, $L_{m,r}^\pm$ is never a torus knot.  To see that $L_{m,r}^\pm$ is never a Montesinos knot, we implement Method 1 (see Subsection \ref{subsec:knottheory}), accounting for $r$ non-integral when necessary.


\subsection{Case (2)}\label{subsection-Case(2)}

Next, we will consider hyperbolic pretzel knots of the form $K_m=P(3,3,2m+1)$.  Here, $K_m$ is hyperbolic if $m\not=-1$ or 0, and if $m$ is positive, then Ichihara-Jong have shown that $K_m$ admits no Seifert fibered surgeries \cite{ichi-jong_pqq}.  The case when $m=-2$ will be covered in Subsection \ref{subsec:3}, so assume $m\leq -3$.  In this section, we prove the following.

\begin{proposition}\label{prop:(3,3,odd)}
A pretzel knot of the form $P(3,3,2m+1)$ with $m\leq -3$ admits no small Seifert fibered surgeries.
\end{proposition}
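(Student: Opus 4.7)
The plan is to proceed in direct analogy with the proofs of Theorem \ref{thm:main1} and Proposition \ref{prop:(3,3,2m)}. First, observe that $K_m = P(3,3,2m+1) = K[1/3, 1/3, 1/(2m+1)]$ does not satisfy the hypothesis of Theorem \ref{thm:non-integral}, so by that theorem its exterior admits a persistent essential lamination and only integral surgery slopes need be considered. Next, $K_m$ is strongly invertible, so the Montesinos trick produces a quotient tangle $\mathcal{T}_m$ together with a linear relation $\alpha_r = c_m - r$ such that $K_m(\alpha_r)$ is the branched double cover of $L_{m,r} := \mathcal{T}_m(r)$. By Proposition \ref{prop:seifertormont}, to show that no $K_m(\alpha_r)$ is a small Seifert fibered space it suffices to show that $L_{m,r}$ is neither a Seifert link nor a length-three Montesinos link.

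To bound the allowable slopes, I would identify a natural toroidal filling. Since $K_m$ is an odd-odd-odd pretzel knot, it has genus one and the obvious Seifert surface caps off in $K_m(0)$ to a closed genus-one surface. Provided this torus is essential and the resulting decomposition has a non-Seifert-fibered piece, Theorem \ref{thm:BGZgenusone} bounds $\Delta(\alpha_r,0)\le 3$; failing that, Theorem \ref{BGZ:weak} gives a bound of $6$, and, in the worst case, Theorem \ref{thm:lack-meyer} still yields $\Delta(\alpha_r,0)\le 8$. Translated through the relation $\alpha_r = c_m - r$, this restricts $r$ to a small finite set independent of $m$.

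The main technical step is to bound $|m|$. I would form a tangle $\mathcal{S}_r$ by excising a 3-ball around the $(2m+1)$-twist region of $\mathcal{T}_m(r)$, so that $M_r := \widetilde{\mathcal{S}_r}$ satisfies $M_r(-1/m) = K_m(\alpha_r)$, and then show that $M_r$ is hyperbolic whenever $|m|$ is sufficiently large, using the template of Lemmas \ref{lemma:2pq1} and \ref{lemma:2pq2}. Concretely, I would compute three or four specific rational tangle fillings of $\mathcal{S}_r$ (such as $0$, $\infty$, $-1$, and perhaps $1$) and identify the corresponding double covers as connect sums with lens spaces, graph manifolds built from $D^2(2,3)$-pieces, or previously understood knot surgeries. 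From these computations I would rule out in turn: reducibility (distinct irreducible fillings, using \cite{gordon-luecke}); $\partial$-reducibility (since $M_r$ is not a solid torus); Seifert fiberedness (exhibiting a non-Seifert-fibered irreducible filling); and toroidality (applying Lemma \ref{lemma:cable} to any essential separating torus, and checking that no admissible cabling slope is compatible with the computed fillings). By geometrization $M_r$ is then hyperbolic, and Theorem \ref{thm:lack-meyer} applied to the exceptional pair $\{\infty, -1/m\}$ forces $|m|\le 8$.

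This reduces the problem to a finite list of pairs $(m,r)$ with $-8\le m\le -3$ and $r$ in the slope range determined above. For each corresponding link $L_{m,r}$ I would apply the criteria of Subsection \ref{subsec:knottheory}: Criteria \ref{crit:positive}, \ref{crit:det}, and \ref{crit:torus} to rule out Seifert links (using that one component of any two-component $L_{m,r}$ is a twisted 2-bridge knot, hence typically not a torus knot), and Criterion \ref{crit:width} together with Method 1 (comparing Alexander, Jones, Khovanov, and, if necessary, Kauffman or HOMFLYPT invariants against the finite list of Montesinos links of comparable crossing number) to rule out Montesinos links. The main obstacle I anticipate is the hyperbolicity verification in the previous paragraph: some of the small-index auxiliary fillings of $\mathcal{S}_r$ may themselves fail to be hyperbolic, so the cabling-space analysis via Lemma \ref{lemma:cable} must be pushed one step deeper, as was needed for $r=3$ in the proof of Lemma \ref{lemma:2pq2}.
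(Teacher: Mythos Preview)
Your proposal is correct and follows essentially the same approach as the paper. The paper likewise uses genus one plus Theorem \ref{thm:BGZgenusone} to get $|r|\le 3$, applies the Montesinos trick (via the rotational strong inversion), excises the twist region to form $\mathcal S_r$, proves hyperbolicity of its branched double cover from a small set of explicit fillings together with Lemma \ref{lemma:cable}, and then finishes the finite residual list with Criteria \ref{crit:width}, \ref{crit:positive}, \ref{crit:2bridge}, and \ref{crit:torus}.

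A couple of minor calibrations: in the quotient tangle the twist parameter becomes $m+1$ rather than $m$, so the relevant filling is $N_r(-1/(m+1))$ and the bound obtained is $|m+1|\le 8$ (hence $-9\le m\le -3$); and in the two-component case the paper finds $L_{m,r}$ is a trefoil linked with a genuine length-three Montesinos knot $J_m$, so the obstruction comes from $J_m$ failing to be 2-bridge or torus, not from a ``twisted 2-bridge'' component. The deeper cabling analysis you anticipate from Lemma \ref{lemma:2pq2} turns out not to be needed here: the single toroidal filling $N_r(0)$ already blocks the only admissible cabling slope.
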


As before, we will first restrict the possible values of $m$ for which $K_m$ might admit a Seifert fibered surgery, then rule the remaining cases out by computer.  First we note that $K_m$ has genus one, so by Theorem \ref{thm:BGZgenusone}, $|r|\leq 3$ (see Figure \ref{figure:P(3,3,odd)SurfaceAndPeriod}).  Since the three pretzel parameters are all odd, $K_m$ cannot admit non-integral Seifert fibered surgeries by Theorem \ref{thm:non-integral}.  Let $\alpha_r =- r$, so that $K_m(\alpha_r)=K_m(-r)$ will correspond with $\mathcal T_m(r)$ (the rationally-filled quotient tangle), as before.

\begin{figure}
\centering
\includegraphics[scale = .55]{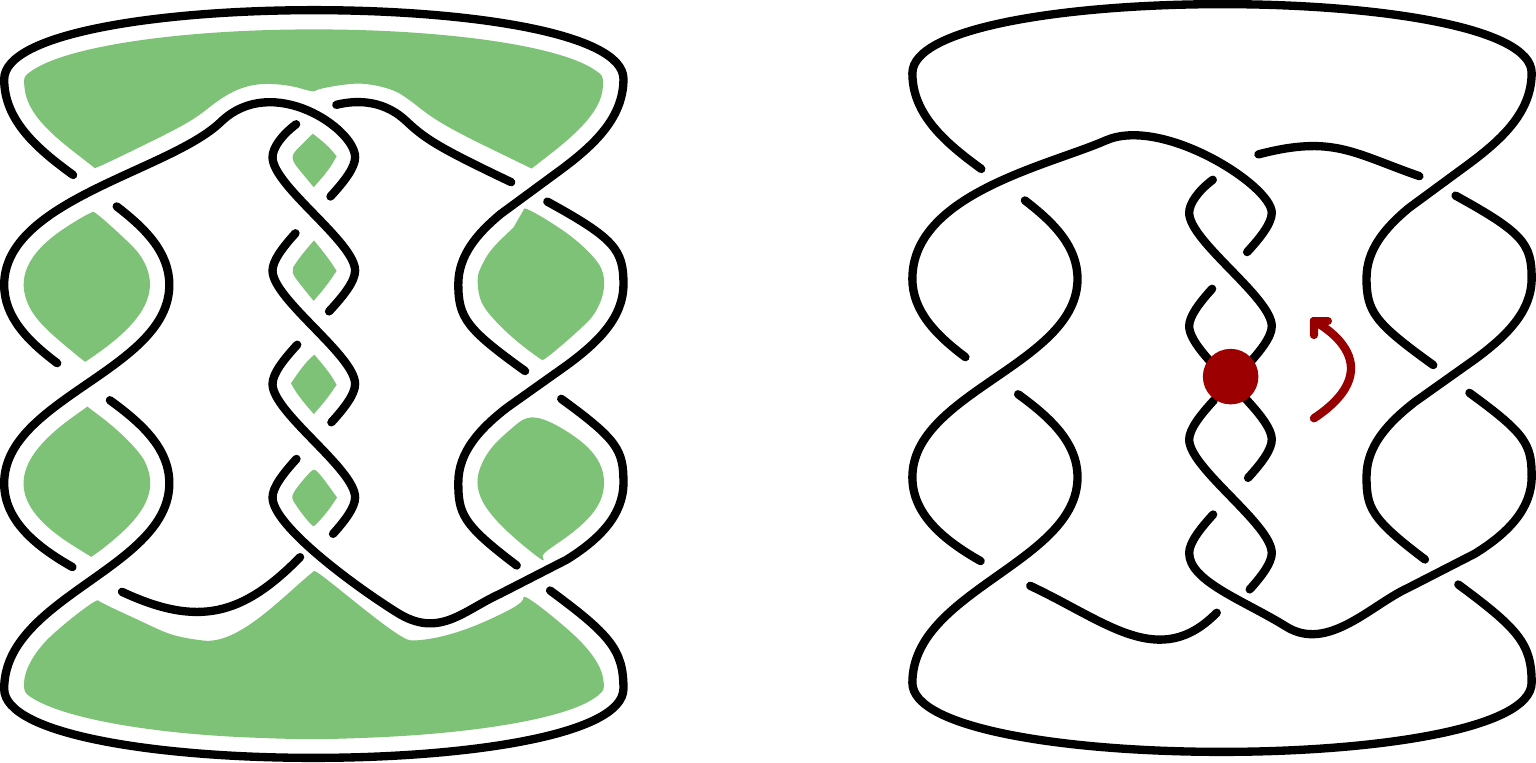}
\caption{The left figure above shows the pretzel knot $P(3,3,2m+1)$ as the boundary of a punctured torus, while the right one exhibits the strong inversion of $P(3,3,2m+1)$ given by rotation.  Here, $m=-3$.}
\label{figure:P(3,3,odd)SurfaceAndPeriod}
\end{figure}

\begin{lemma}
If $m\leq -10$, then $K_m(r)$ is not a small Seifert fibered space.
\end{lemma}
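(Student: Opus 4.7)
The plan is to imitate the tangle argument from Section~\ref{section:2pq} and from the proof of Case~(1) above. Let $\mathcal T_m$ be the quotient tangle obtained from the strong inversion of $K_m=P(3,3,2m+1)$ depicted in Figure~\ref{figure:P(3,3,odd)SurfaceAndPeriod}, so that the branched double cover of $\mathcal T_m(r)$ is $K_m(\alpha_r)=K_m(-r)$. Form the tangle $\mathcal S_r$ by removing a $3$-ball containing the $(2m+1)$-twist region of $\mathcal T_m(r)$, and let $N_r$ denote its branched double cover; then $N_r$ is independent of $m$ and $N_r(-1/m)=K_m(\alpha_r)$ under the standard slope parametrization. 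Since $K_m$ has genus one (it bounds the once-punctured torus in Figure~\ref{figure:P(3,3,odd)SurfaceAndPeriod}) and $K_m(0)$ is toroidal and non-Seifert fibered, Theorem~\ref{thm:BGZgenusone} already restricts to $|r|\leq 3$, so only finitely many tangles $\mathcal S_r$ need to be analyzed.

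For each such $r$, the strategy is to show that $N_r$ is hyperbolic, whereupon Theorem~\ref{thm:lack-meyer} applied to the exceptional slopes $-1/m$ and $\infty$ on $N_r$ yields $|m|=\Delta(-1/m,\infty)\leq 8$, contradicting $m\leq -10$. To verify hyperbolicity, one computes the auxiliary fillings $N_r(\infty)$, $N_r(0)$, and $N_r(-1)$ by drawing the corresponding rational tangle fillings of $\mathcal S_r$. Each of these should be identifiable as a familiar $3$-manifold---either a small Seifert fibered space with explicit invariants, a graph manifold of the form $D^2(a,b)\cup_{T^2}D^2(c,d)$, or a surgery on a $2$-bridge knot whose exceptional fillings are classified in \cite{britt-wu:2bridge}---since the corresponding tangle fillings close into small Montesinos or pretzel links. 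One then argues, in order: $N_r$ is irreducible, using that it has two distinct irreducible fillings together with \cite{gordon-luecke}; $N_r$ is not Seifert fibered, since the base surface would have to be $D^2$, and a short case analysis on the number and multiplicity of exceptional fibers contradicts the Seifert invariants of the fillings listed above together with the hypothesis that $N_r(-1/m)$ is small Seifert fibered for some $|m|\geq 10$; and hence $N_r$ is $\partial$-irreducible.

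The main obstacle is ruling out the possibility that $N_r$ is toroidal. Suppose $N_r=A\cup_F B$ with $F$ an innermost essential torus and $\partial N_r\subset B$. Since $\Delta(-1/m,\infty)=|m|\geq 10$, and both $N_r(-1/m)$ and $N_r(\infty)$ are atoroidal (the former by assumption, the latter by the explicit identification above), Lemma~\ref{lemma:cable}(a) forces $B$ to be a cable space with cabling slope $\gamma$ satisfying $\Delta(\gamma,-1/m)=\Delta(\gamma,\infty)=1$; the only slope meeting both conditions for $|m|\geq 10$ is $\gamma=0$. Then $B(0)=(S^1\times D^2)\# L$ for a lens space $L$, and comparing with the explicit form of $N_r(0)$ determines $A(\eta_0)$, from which one derives a contradiction by the same combinatorial analysis used in Case~(1): either $L$ has the wrong order, or $A$ would have to be a Seifert fibered knot exterior whose fibering is incompatible with the remaining fillings $A(\eta_\infty)$ and $A(\eta_{-1})$. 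The case in which $F$ remains incompressible in $N_r(\infty)$ is handled in parallel, using the decomposition of $N_r(\infty)$ to pin down $A$ and one further auxiliary filling (e.g.\ at slope $-1$) to reach a contradiction. Once hyperbolicity is established, Theorem~\ref{thm:lack-meyer} closes the argument.
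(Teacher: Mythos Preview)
Your strategy is the same as the paper's: pass to the quotient tangle, remove the twist region to get $\mathcal S_r$ with branched double cover $N_r$, show $N_r$ is hyperbolic, and invoke Theorem~\ref{thm:lack-meyer}. Two points of divergence are worth flagging. First, after the rotational strong inversion the twist box in $\mathcal T_m(r)$ carries $m+1$ twists, not $2m+1$, so the relevant slope is $-1/(m+1)$ and the conclusion is $|m+1|\le 8$; this is harmless but should be corrected. Second, the paper's argument is considerably shorter because it identifies $N_r(0)$ as an irreducible, \emph{non-Seifert fibered} toroidal graph manifold. That single filling does double duty: it immediately rules out $N_r$ Seifert fibered (no need for your case analysis on base surface and exceptional fibers), and it kills the cable-space scenario in one line, since a cabling slope $\gamma=0$ would force $N_r(0)$ to be reducible or a lens space. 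Your more elaborate toroidal analysis, and in particular the ``$F$ remains incompressible in $N_r(\infty)$'' branch, is unnecessary here: $N_r(\infty)=S^2(-1/2,1/3,-1/(r+6))$ is small Seifert fibered and hence atoroidal, so $F$ must compress. You appear to be importing the case split from Case~(1), where $M_r^\pm(\infty)$ was genuinely toroidal; in Case~(2) the roles of the $0$- and $\infty$-fillings are reversed, and recognizing this streamlines the proof substantially.
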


\begin{proof}
In a slight variation of the preceding cases, these knots possess a strong inversion that is a half-rotation of the plane.  Let $\mathcal T_m(r)$ be the resulting quotient link, as before (see Figure \ref{figure:P(3,3,odd)RotationalPeriod}).  Again, we form the tangle $\mathcal S_r$ by removing a ball containing the $(m+1)$-twist region (see Figure \ref{figure:P(3,3,odd)Fillings}).  If we denote by $N_r$ the branched double cover of $S^3$ along $\mathcal S_r$, and assume for a contradiction that $K_m(r)$ is a small Seifert fibered space for some $m\leq -10$ and some $r$, then we have the following fillings of $N_r$ (see Figure \ref{figure:P(3,3,odd)Fillings}).  Note that $N_r(\infty) = P(3,3,-1)(r)$, so it is simply $r$-surgery on the left-handed trefoil.

\begin{figure}
\centering
\includegraphics[scale = .5]{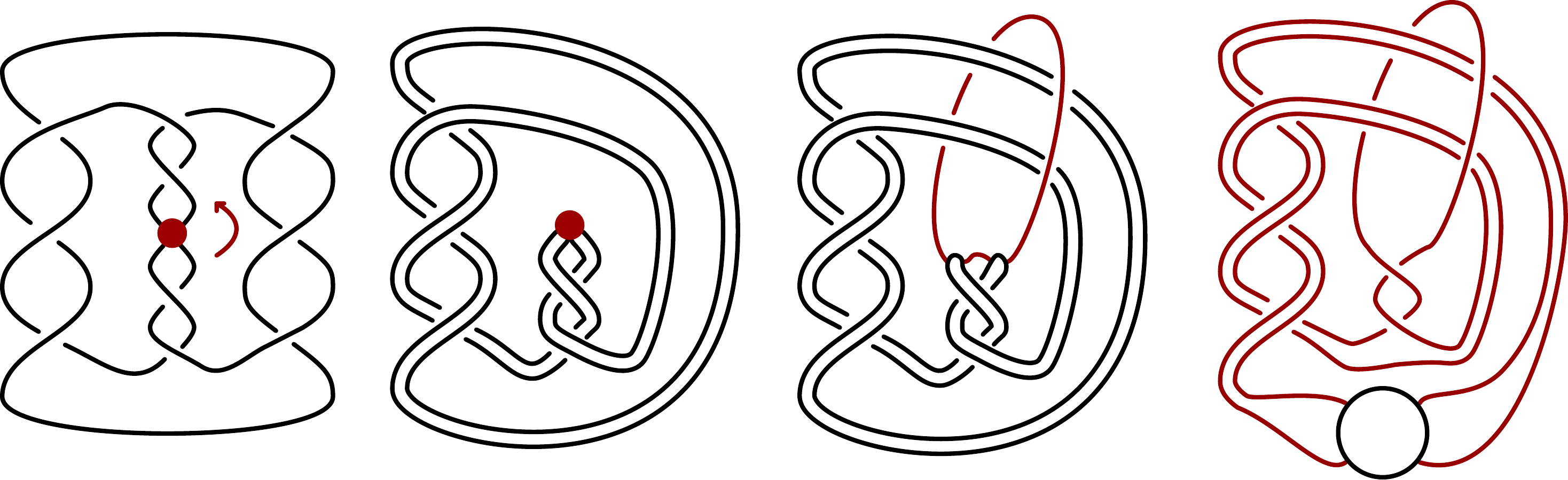}
\put(-59.5,9){\small$r$+6}
\caption{The figures above illustrate how to obtain the quotient tangle $\mathcal T_{m}$ by applying the Montesinos trick to the strong inversion of $P(3,3,2m+1)$ given by rotating the knot $\pi$ radians through its center. Here, $m=-3$.}
\label{figure:P(3,3,odd)RotationalPeriod}
\end{figure}

$$\begin{array}{rcl} 
N_r(-1/(m+1)) & =& \text{small Seifert fibered space (by assumption)}\\
N_r(0) & = & \text{non-Seifert fibered toroidal space}\\
N_r(\infty)  & =& S^2(-1/2,1/3, -1/(r+6))
\end{array}$$

\begin{figure}
\centering
\includegraphics[scale = .5]{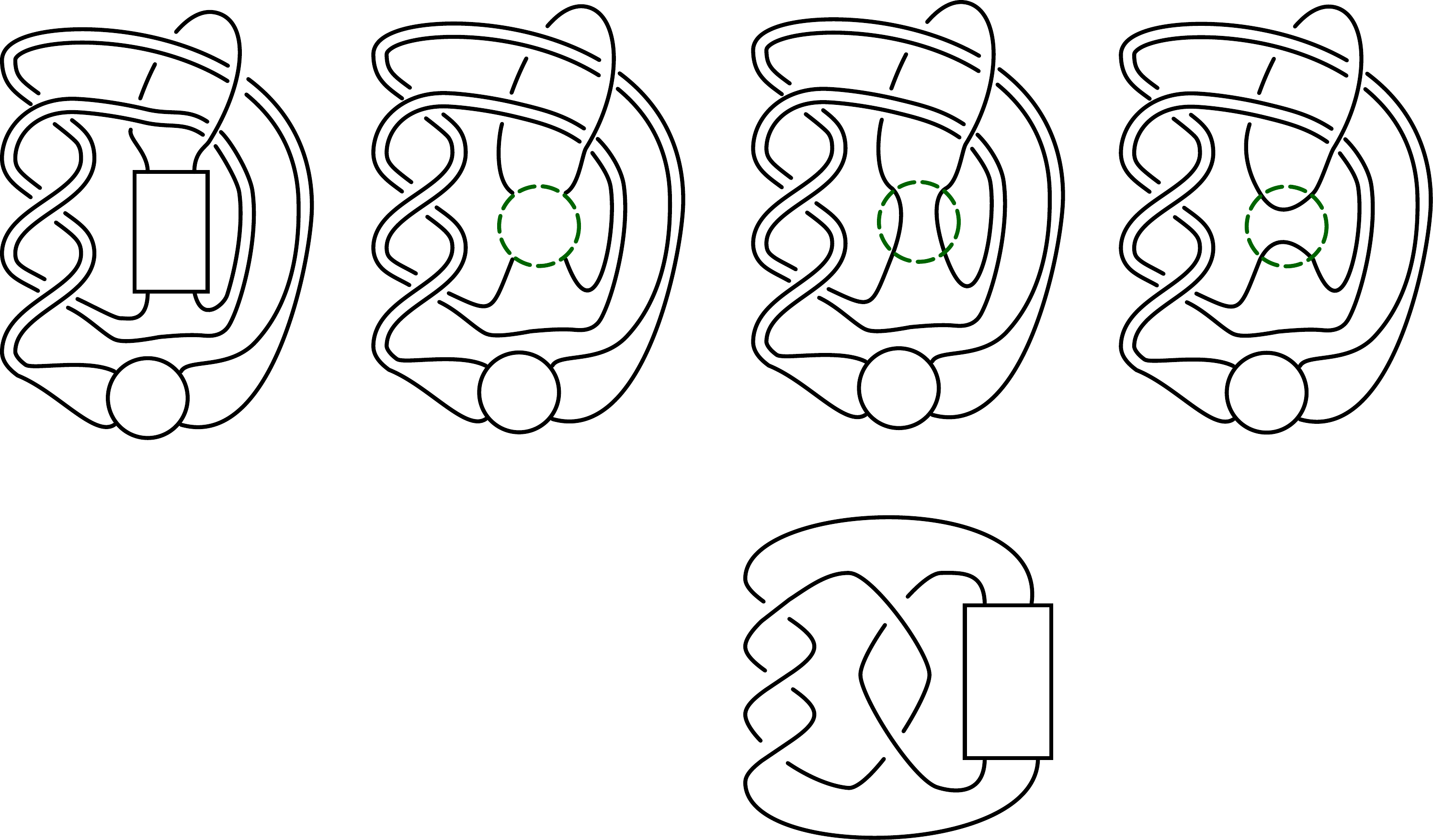}
\put(-390,182){\small$m$+1}
\put(-60,132){\small$r$+6}
\put(-169.5,132.5){\small$r$+6}
\put(-285,132){\small$r$+6}
\put(-396,131.5){\small$r$+6}
\put(-57,110){\small (d)}
\put(-169.5,-13){\small(c)}
\put(-282,110){\small (b)}
\put(-392,110){\small (a)}
\put(-137,45){\small -6-$r$}
\put(-165,105){\huge$\wr$}
\caption{The first figures above, from left to right, are: (a) the link $\mathcal T_m(-\alpha_r)$, (b) the tangle $\mathcal S_r$, (c) the filling $\mathcal S_r(\infty)$, which is isotopic to $K[-1/2, 1/3, -1/(6+r)]$, and (d) the fillings $\mathcal S_r(0)$, whose link complement contains an essential torus.}
\label{figure:P(3,3,odd)Fillings}
\end{figure}

Since $N_r$ has distinct irreducible fillings and a non-Seifert fibered irreducible filling, $N_r$ is irreducible, non-Seifert fibered, and $\partial$-irreducible.  If $N_r$ is toroidal, then since it has atoroidal fillings at distance $\Delta(-1/(m+1), \infty) = |m+1|>2$, it has as a subspace a cable space with cabling slope $\gamma = 0$.  This means that $N_r(0)$ is either reducible or a lens space, which is a contradiction.  It follows that $N_r$ is hyperbolic and that $\Delta(-1/(m+1), \infty) = |m+1|\leq 8$, a contradiction that completes the proof.

\end{proof}

\subsection{Completing the proof of Proposition \ref{prop:(3,3,odd)}}

Let $L_{m,r} = \mathcal T_m(r)$ denote the quotient link described above, and note that it is only necessary to consider $r\in\Z$ here. If $L_{m,r}$ is a link, then we see that it is the union of a trefoil with the knot $J_m=K[1/2,1/3, (m-1)/(2m-3)]$.  Since $m\leq -3$, by assumption, $J_m$ is never a torus knot or a 2-bridge knot.  It follows that $L_{m,r}$ is never a Seifert link or a Montesinos link, by Criteria \ref{crit:torus} and \ref{crit:2bridge}, respectively.

When $L_{m,r}$ is a knot, we see that $|\text{Kh}(L_{m,r})|=6$ and $s(L_{m,r})<\text{br}(\Delta_{K_{m,r}}(t))$, so $L_{m,r}$ cannot be a Montesinos knot or a torus knot by Criteria \ref{crit:width} and \ref{crit:positive}, respectively.

\subsection{Case (3)}\label{subsec:3}

We now consider hyperbolic pretzel knots $K_m=P(3,-3,2m+1)$.  We will allow $m$ to be positive or negative, which will allow us to restrict our analysis to positive surgery slopes (which must be integral if they are to be exceptional by Theorem \ref{thm:non-integral}).   These are genus one knots, so $K_m(r)$ can only be exceptional if $|r|\leq 3$ by Theorem \ref{thm:BGZgenusone}.  Thus, we assume $r=1,2,$ or 3.

\begin{figure}
\centering
\includegraphics[scale = .5]{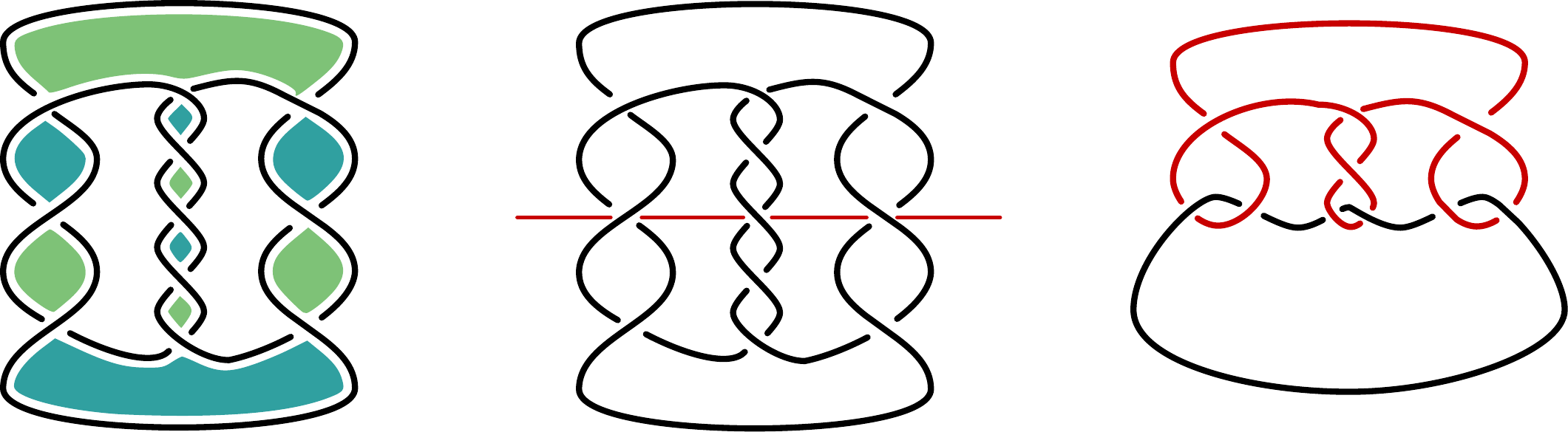}
\caption{(a) The knot $P(3,-3,2m+1)$ (here, $m=-3$), shown as the boundary of a punctured torus, (b) along with the axis of the cycle $f$ of period 2 of the knot, and (c) the quotient knot $K_f$ (here, the unknot), along with the image of Fix($f$) in the quotient.}
\label{figure:P(3,-3,2n+1)SurfaceAndInversion}
\end{figure}

These knots are the first that we have encountered with no strong inversion (excepting $P(3,3,-3))$, so we cannot make use of the Montesinos trick.  However, $K_m$ does have cyclic period 2, so we will study the space $K_m(r)$ by studying the link $(L_m)_f$, which is the image of $\text{Fix}(f)/\langle f\rangle$ in $(K_m)_f(r/2)$ (recall this set-up from Section \ref{subsection:symmetries}).  See Figure \ref{figure:P(3,-3,2n+1)SurfaceAndInversion}.

\begin{proposition}\label{prop:(3,-3,odd)}
A hyperbolic pretzel knot of the form $P(3,-3, 2m+1)$ admits a small Seifert fibered surgery precisely in the following instances.
\begin{itemize}
\item $P(3,-3,3)(1) = S^2(1/2,-1/5,-2/7)$
\item $P(3,-3,5)(1) = S^2(-1/3, -1/4, 3/5)$
\end{itemize}
\end{proposition}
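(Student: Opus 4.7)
The approach mirrors the previous sections but must be adapted to the cyclic period 2 symmetry $f$ of $K_m$ (as in Figure \ref{figure:P(3,-3,2n+1)SurfaceAndInversion}), since $K_m$ is not strongly invertible.  Two preliminary reductions restrict the slopes.  First, $K_m$ has genus one and $K_m(0)$ is a non-Seifert toroidal manifold (coming from the obvious essential torus in the $(3,-3)$ sub-tangle), so Theorem \ref{thm:BGZgenusone} gives $|r|\leq 3$.  Second, all three pretzel parameters are odd, so Theorem \ref{thm:non-integral} forces any small Seifert fibered surgery slope to be integral.  Because $m$ may take either sign, working up to mirror image allows us to assume $r\in\{1,2,3\}$.

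The factor knot $(K_m)_f$ is the unknot $U$, so the factored surgery space $U(r/2)$ is $S^3$ when $r\in\{1,2\}$ and is $L(3,2)$ when $r=3$.  Writing $(L_m)_f$ for the quotient object associated to $K_m(r)$ inside $U(r/2)$ (a knot when $r$ is odd, a two-component link when $r$ is even), the surgery $K_m(r)$ is the double cover of $U(r/2)$ branched over $(L_m)_f$.  Corollary \ref{corollary:FactorKnotType} is consistent with this setup: it forces $k=0$ when $r\in\{1,2\}$ and $k=1$ when $r=3$, matching $U(r/2)=S^3$ versus $U(r/2)=L(3,2)$.

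To bound $|m|$ I plan to follow the pattern of Lemma \ref{lemma:2pq2} and of Subsection \ref{subsection-Case(2)}.  For each $r\in\{1,2,3\}$, form the tangle $\mathcal S_r$ by removing from a diagram for $(L_m)_f$ the ball containing the image of the $(2m+1)$-twist region, and let $N_r$ denote its branched double cover, so that $N_r(-1/m)=K_m(r)$.  Exhibiting representative fillings $N_r(0)$, $N_r(\infty)$, $N_r(\pm 1)$, $N_r(\pm 1/2)$ (each identified after isotopy as a small Seifert fibered space, or as surgery on a 2-bridge or torus knot), one then runs the now-standard dichotomy: distinct irreducible fillings give irreducibility of $N_r$; a non-Seifert irreducible filling gives non-Seifert-fiberedness and $\partial$-irreducibility; and Lemma \ref{lemma:cable} rules out toroidality by showing that no candidate cabling slope can be simultaneously distance one from the relevant exceptional fillings.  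Hyperbolicity of $N_r$ together with Theorem \ref{thm:lack-meyer} then yields a bound of the form $|m|\leq 8$.

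For the finite list of candidate pairs $(m,r)$ that remains, the plan is to apply Criteria \ref{crit:width}, \ref{crit:2bridge}, \ref{crit:positive}, \ref{crit:det}, and \ref{crit:torus} together with Method 1 to show that $(L_m)_f$ is neither a Seifert link nor a Montesinos link, except precisely when $(m,r)=(1,1)$ or $(2,1)$; for these two pairs one verifies directly that $(L_m)_f$ is the Montesinos link presenting the claimed small Seifert fibered space.  The main obstacle I anticipate is the $r=3$ case, since $(L_m)_f$ then lives in $L(3,2)$ rather than in $S^3$, and the invariants of Subsection \ref{subsec:knottheory} are only defined for links in $S^3$.  I would address this either by pulling $(L_m)_f$ back to the three-fold universal cover $S^3\to L(3,2)$ and applying the invariants to the resulting three-component link, or by using that the first homology of a double cover of $L(3,2)$ branched over a knot is severely constrained, which combined with the known homology of small Seifert fibered spaces should reduce the $r=3$ case to a short list that can be eliminated by hand.
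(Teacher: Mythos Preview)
Your preliminary reductions (genus one plus Theorem \ref{thm:BGZgenusone}, integrality via Theorem \ref{thm:non-integral}, restriction to $r\in\{1,2,3\}$) and your identification of the factor knot and the quotient spaces $U(r/2)$ match the paper exactly, and for $r=1$ your plan coincides with the paper's: form the tangle $\mathcal S$, show its branched double cover is hyperbolic via a filling analysis, deduce $|m|\leq 8$, and then eliminate the finitely many survivors with Criterion \ref{crit:positive} and Method~1.

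For $r=2$ and $r=3$, however, the paper does \emph{not} bound $m$ and then check. It finds arguments valid for all $m$ at once, and never sets up an auxiliary manifold $N_r$ for these slopes. For $r=2$, the quotient link $(L_m)_f\subset S^3$ is the union of an unknot with $J_m=K[1/3,-1/3,1/(m-1)]$; since $J_m$ is never a torus knot and (apart from $m=0,2$) never a 2-bridge knot, Criteria \ref{crit:torus} and \ref{crit:2bridge} dispose of every $m$ simultaneously, and the residual case $m=2$ is killed by a short determinant count. For $r=3$, the paper takes your first proposed remedy---the three-fold cover $S^3\to L(3,2)$---but exploits it globally rather than case-by-case: the lift $\widetilde{L_m}$ is a \emph{knot} (not a three-component link as you guessed), and if $K_m(3)$ were small Seifert fibered it would necessarily have the form $S^2(3,3,c)$ with $\widetilde{L_m}$ a Montesinos knot of type $K[*/c,*/c,*/c]$; computing $\det(\widetilde{L_m})=49$ independently of $m$ forces $c=7$, and a closed recursive formula for the Jones polynomial of $\widetilde{L_m}$ then rules out every $m\neq -1$.

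Your uniform bound-then-check scheme is not wrong in principle, but carrying it out would require you to establish hyperbolicity of $N_2$ and of $N_3$ (the latter arising from a tangle in $L(3,2)$ rather than in $S^3$, which makes identifying and analyzing the test fillings genuinely awkward), and then to run Method~1 on a further batch of cases. The paper's direct all-$m$ arguments for $r=2$ and $r=3$ are both shorter and structurally cleaner; you should replace those two parts of your plan with them.
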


Note that the first exceptional surgery was discovered by Song, and the second by Mattman, Miyazaki, and Motegi, see \cite{MMM}.  Again, our first task is to restrict the possible values of $m$ for which $K_m$ might admit a Seifert fibered surgery, then rule out the remaining cases using knot invariants.  We will handle the three cases $r=1, 2,$ and 3 separately below.

\begin{lemma}
The space $P(3,-3,2m+1)(1)$ is not a small Seifert fibered space for $|m|\geq 9$.
\end{lemma}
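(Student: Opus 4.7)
The plan is to adapt the two-step framework used in the preceding lemmas (``identify a finite list of exceptional slopes via a distance bound, then check the residual cases by invariants'') to the period-2 cyclic symmetry setting, where the Montesinos trick is unavailable but the double branched cover picture still applies. Let $f$ denote the order-2 cyclic symmetry of $K_m=P(3,-3,2m+1)$ depicted in Figure \ref{figure:P(3,-3,2n+1)SurfaceAndInversion}, and let $K_f$ be the factor knot. Since $r=1$ is odd, if $K_m(1)$ is a small Seifert fibered space then by the discussion preceding Corollary \ref{corollary:FactorKnotType} we have $K_m(1)/\bar f = K_f(1/2)$, which must be either $S^3$ or a lens space. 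As is visible in the figure, $K_f$ is the unknot, so $K_f(1/2) = S^3$ and the image $(L_m)_f$ of $\mathrm{Fix}(\bar f)$ in the quotient is a link in $S^3$ whose double branched cover is $K_m(1)$.

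Since only the twist region encoding $2m+1$ depends on $m$, I would package the rest of $(L_m)_f$ as a fixed two-string tangle $\mathcal{S}^{(1)}$ (chosen so that $\mathcal{S}^{(1)}(-1/m) = (L_m)_f$), and let $N^{(1)}$ denote its branched double cover, a compact 3-manifold with two torus boundary components. Then $N^{(1)}(-1/m) = K_m(1)$. I would then explicitly compute the fillings $N^{(1)}(0)$, $N^{(1)}(\infty)$, and, if needed, $N^{(1)}(-1)$, by drawing the corresponding rational tangle fillings of $\mathcal{S}^{(1)}$, isotoping them to Montesinos or two-bridge form, and reading off their double branched covers as Seifert fibered spaces, lens spaces, or graph manifolds. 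The goal is to exhibit at least one filling that is irreducible and non-Seifert fibered (e.g.\ a non-Seifert toroidal filling) together with two further fillings at different slopes whose identifications obstruct any Seifert fibered or cable-space structure on $N^{(1)}$.

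With these fillings in hand, the usual chain of implications proceeds: $N^{(1)}$ has distinct irreducible fillings so is irreducible by the knot complement theorem \cite{gordon-luecke}; it has a non-Seifert fibered irreducible filling so is not Seifert fibered, hence $\partial$-irreducible. To rule out a toroidal structure, decompose $N^{(1)} = A \cup_F B$ along an outermost essential separating torus with $\partial N^{(1)} \subset B$; the assumption $|m|\geq 9$ gives $\Delta(-1/m,\infty) = |m| \geq 9 \geq 2$, so Lemma \ref{lemma:cable} forces $B$ to be a cable space whose cabling slope $\gamma$ satisfies $\Delta(\gamma,-1/m) = \Delta(\gamma,\infty) = 1$. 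For $|m|\geq 9$ the only integer solution is $\gamma = 0$, which would require $N^{(1)}(0)$ to be reducible or a lens space, contradicting the computation of $N^{(1)}(0)$. Thus $N^{(1)}$ is hyperbolic, and Theorem \ref{thm:lack-meyer} applied to the exceptional slopes $-1/m$ and $\infty$ yields $|m| = \Delta(-1/m,\infty) \leq 8$, the desired contradiction.

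The step I expect to be the main obstacle is the explicit construction and bookkeeping of the quotient tangle $\mathcal{S}^{(1)}$: because we are working through a cyclic quotient rather than a strong inversion, the identification of $(L_m)_f \subset S^3$ and of its framings at $0$, $\infty$, $-1$, and $-1/m$ must be tracked carefully through the period-2 quotient and the surgery on $K_f = U$. In particular, the choice of framing has to be consistent with the identification $N^{(1)}(-1/m) = K_m(1)$, and the exact Seifert invariants of the auxiliary fillings must be pinned down precisely enough that the distance-one constraint in the cable-space step genuinely fails (rather than merely requiring further case analysis). Once the pictures are drawn correctly, the rest of the argument is a direct transcription of the template of the previous two lemmas.
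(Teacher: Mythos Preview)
Your approach is exactly the paper's: pass to the period-2 quotient, view $K_m(1)$ as the branched double cover of a knot $(L_m)_f$ in $S^3$, excise the $m$-twist region to get a tangle $\mathcal S$ with branched double cover $Z$, and show $Z$ is hyperbolic so that Theorem~\ref{thm:lack-meyer} bounds $|m|$.

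There is, however, a concrete gap in the fillings you plan to use. The three slopes you name turn out to give
\[
Z(0)=S^2\times S^1,\qquad Z(\infty)=S^2(2,3,11),\qquad Z(-1)=S^2(2,5,7),
\]
all of which are Seifert fibered, and the first is reducible. So your intended ``exhibit an irreducible non-Seifert fibered filling'' step does not go through with this list, and your cable-space contradiction at $\gamma=0$ fails as stated: $Z(0)=S^2\times S^1$ \emph{is} reducible, so ``$\gamma=0$ forces $Z(0)$ reducible or a lens space'' is not a contradiction. The paper repairs both issues by computing two additional fillings. First, $Z(-1/3)=D^2(2,3)\cup_{T^2}D^2(2,3)$ (corresponding to $P(3,-3,7)(1)$) is the needed irreducible non-Seifert toroidal filling. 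Second, for the toroidal case the paper uses the atoroidal fillings at $\infty$, $-1$, and $-1/2$ (rather than $-1/m$) to force $B$ to be a cable space with $\gamma\in\{0,-1\}$; then $\gamma=-1$ is excluded since $Z(-1)=S^2(2,5,7)$ is neither reducible nor a lens space, and $\gamma=0$ is excluded because the cabling-slope filling of a cable space $C(p,q)$ contributes a genuine lens-space summand $L(q,p)$ with $q\ge 2$, which the prime manifold $S^2\times S^1$ cannot contain. Once you add $-1/2$ and $-1/3$ to your list of computed fillings and sharpen the $\gamma=0$ step accordingly, your argument becomes the paper's.
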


\begin{proof} 

Assume that $P(3,-3,2m+1)(1)$ is a small Seifert fibered space with $|m|\geq 9$.  As we have seen $K_m(1)$ is the branched double cover of $S^3$ along the knot $(L_m)_f$.  We form the tangle $\mathcal S$ by removing the $m$-twist box of $(L_m)_f$ (see Figure \ref{figure:P(3,-3,2m+1)KfLfRr=1}).  Let $Z = \widetilde{\mathcal S}$.  Then, we have the following fillings:

\begin{figure}
\centering
\includegraphics[scale = .5]{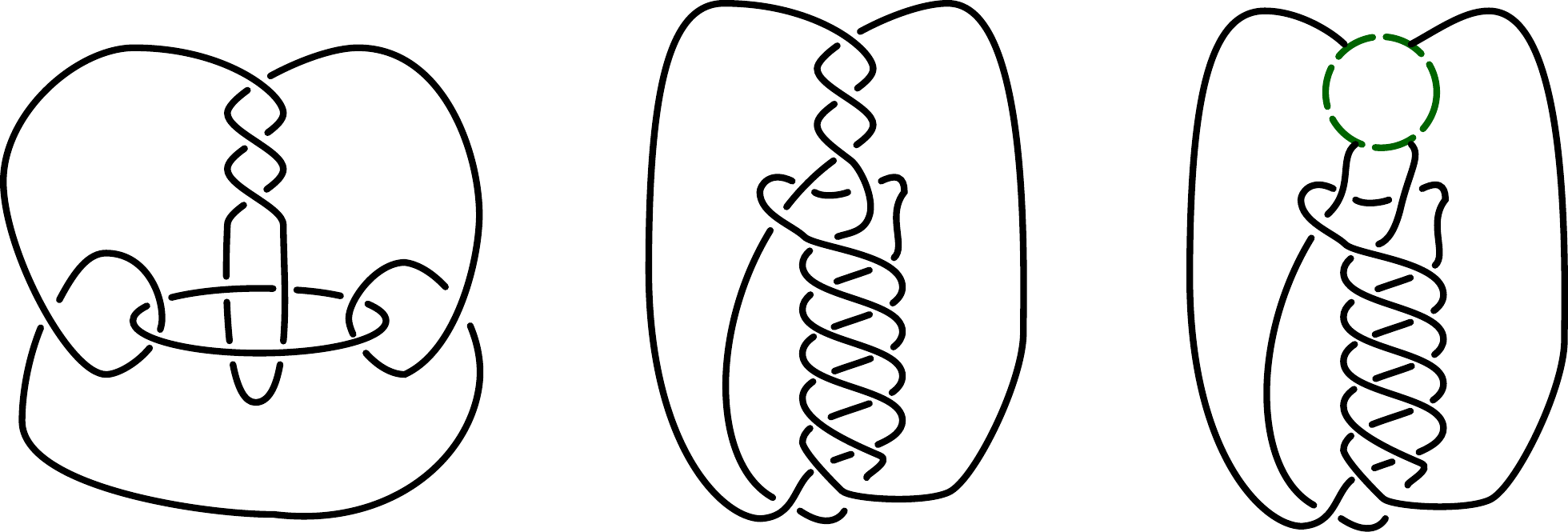}
\put(-40,-15){(c)}
\put(-140,-15){(b)}
\put(-245,-15){(a)}
\caption{(a) The link $(K_m)_f\cup\text{Fix}(f$) in the quotient, (b) the knot $(L_m)_f$ resulting from $(1/2)$-surgery on the unknotted component, and (c) the tangle $\mathcal S$ formed by removing the $m$-twist area of the knot.  Here, $m=-3$.}
\label{figure:P(3,-3,2m+1)KfLfRr=1}
\end{figure}

$$\begin{array}{l} 
Z(-1/m) = \text{small Seifert fibered (by assumption)}\\
Z(0) = S^2\times S^1 \\
Z(-1/3)  = D^2(2,3)\cup_{T^2}D^2(2,3) \\
Z(-1/2) = S^2(3,5,7) \\
Z(-1) = S^2(2, 5, 7) \\
Z(\infty) = S^2(2,3,11)
\end{array}$$

\begin{figure}
\centering
\includegraphics[scale = .5]{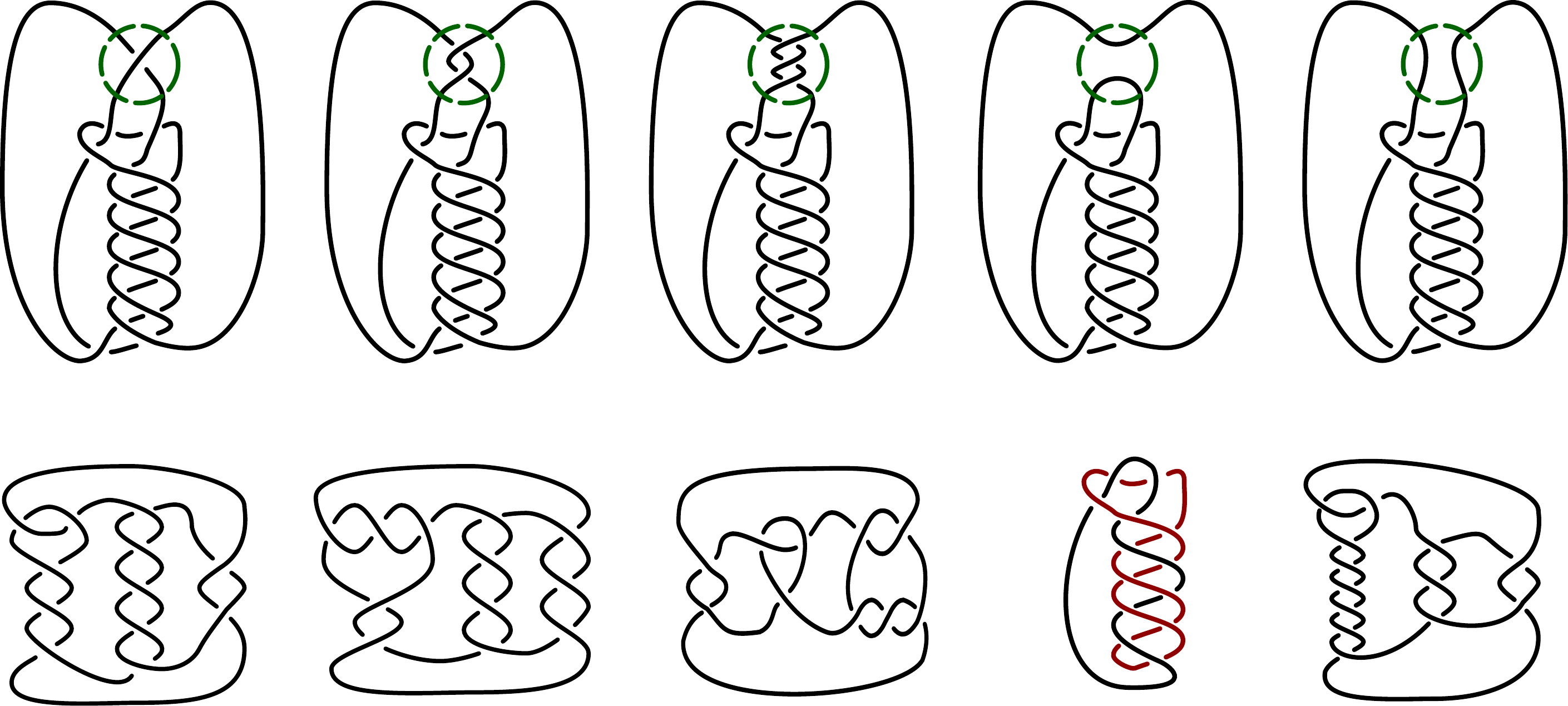}
\put(-115,73){\huge$\wr$}
\put(-205,70){\huge$\wr$}
\put(-291,72){\huge$\wr$}
\put(-375,72){\huge$\wr$}
\put(-35,70){\huge$\wr$}
\caption{Above, we see five interesting fillings of $\mathcal S$: $\mathcal S(1),\mathcal S(1/2),\mathcal S(1/3),\mathcal S(0)$, and $\mathcal S(\infty)$.}
\label{figure:P(3,-3,2m+1)(1)Fillings}
\end{figure}

It is worth noting that the last four fillings on the list correspond to exceptional fillings of hyperbolic pretzel knots. $P(3,-3,7)(1) = Z(-1/3)$, $P(3,-3,5)(1) = Z(-1/2)$, $P(3,-3,3)(1) = Z(-1)$, and $P(3,-3,1)(1) = Z(\infty)$.  The lattermost is surgery on the rational knot $\mathcal K[-2/9]$.  See Figure \ref{figure:P(3,-3,2m+1)(1)Fillings}.

Because $Z$ has distinct irreducible fillings as well as an irreducible non-Seifert fibered filling ($Z(-1/3)$ is a non-Seifert fibered graph manifold), it is impossible for $Z$ to be Seifert fibered, reducible, or $\partial$-reducible.  Assume that $Z$ is toroidal, so $Z=A\cup_FB$ with $A$ atoroidal and $\partial Z\subset B$.  Then, since $F$ compresses in $Z(-1/2)$, $Z(-1)$, and $Z(\infty)$, and $\Delta(1/2, \infty) \geq 2$, $B$ must be a cable space.  The cabling slope $\gamma$ is restricted to be distance one from $\infty$ and $-1/2$, so $\gamma=0$ or $-1$.  Since $Z(-1)$ is neither a lens space nor reducible, we cannot have $\gamma=-1$.  If $\gamma=0$, then $B(0) = (S^1\times D^2)\#(S^1\times S^2)$, which is not a possible result of filling on a cable space.

It follows that $Z$ is hyperbolic, so $\Delta(-1/m),\infty)\leq 8$, so $|m|\leq 8$, which gives the desired contradiction.

\end{proof}

\begin{proposition}
$P(3,-3,2m+1)(2)$ is never a small Seifert fibered space for $m\not=0,-1$.
\end{proposition}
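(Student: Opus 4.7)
The plan is to imitate closely the argument just given for the $r=1$ case. The knot $K_m=P(3,-3,2m+1)$ has cyclic period $2$ with factor knot the unknot $U$ (see Figure \ref{figure:P(3,-3,2n+1)SurfaceAndInversion}), so by Section \ref{subsection:symmetries}, since $r=2$ is even, $K_m(2)$ is the branched double cover of a two-component link $(L_m)_f$ in $K_f(1)=S^3$, namely the union of $\mathrm{Fix}(f)/\langle f\rangle$ with the core of the filling solid torus. As in the previous subsection, I would form a tangle $\mathcal{S}'$ by excising a ball containing the $m$-twist region of $(L_m)_f$, so its branched double cover $Z'$ satisfies $Z'(-1/m)=K_m(2)$ while $Z'(\infty)$ realizes integer surgery on one of the small, well-understood pretzel knots $P(3,-3,\pm 1)$.

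The first step is to identify several low-slope rational fillings of $\mathcal{S}'$ whose branched double covers can be put, after isotopy, into a recognizable form. I expect to find an $S^1\times S^2$ or reducible filling, a non-Seifert-fibered toroidal graph manifold of the shape $D^2(2,3)\cup_{T^2}D^2(2,3)$ coming from a Montesinos expression at some small slope, together with two or three small Seifert fibered fillings at pairwise distinct slopes corresponding to $K_k(2)$ for small $|k|$. The existence of distinct irreducible fillings forces $Z'$ to be irreducible and $\partial$-irreducible, the non-Seifert-fibered irreducible filling prevents $Z'$ from being Seifert fibered, and distinctness of the small Seifert fibered fillings pins the local structure.

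The remaining potential obstruction to hyperbolicity is that $Z'$ could be toroidal. Following the template of the $r=1$ proof, I would decompose $Z'=A\cup_F B$ along an outermost essential torus $F$ with $\partial Z'\subset B$, and then apply Lemma \ref{lemma:cable} to three of the computed fillings that lie at pairwise distance $\geq 2$; this forces $B$ to be a cable space whose cabling slope $\gamma$ must be distance one from each listed small-Seifert-fibered or $S^1\times S^2$ slope. The list of candidate $\gamma$'s will be short (one or two slopes), and for each candidate I would verify that the corresponding filling of $B$ fails to be reducible, a solid torus, or a lens space connect summand, yielding the required contradiction. This toroidal case is the main obstacle: the precise configuration of low-slope fillings must be rich enough that no consistent cabling slope survives. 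Once $Z'$ is hyperbolic, Theorem \ref{thm:lack-meyer} gives $|m|=\Delta(-1/m,\infty)\leq 8$.

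For the finite remaining list of $m\in\{-8,\ldots,8\}\setminus\{0,-1\}$, Proposition \ref{prop:seifertormont} reduces the question to showing $(L_m)_f$ is neither a Seifert link nor a Montesinos link. For most $m$, one component of $(L_m)_f$ is readily seen to be neither a $2$-bridge knot nor a torus knot, so Criteria \ref{crit:2bridge} and \ref{crit:torus} apply immediately, eliminating the Montesinos-link and Seifert-link possibilities in a single stroke. The handful of surviving cases, as well as the $2$-component cases where both components are simple, I would settle by applying Method 1 from Subsection \ref{subsec:knottheory}: a KnotTheory`-assisted comparison of Khovanov width (Criterion \ref{crit:width}), determinant and Rasmussen invariant (Criteria \ref{crit:positive} and \ref{crit:det}), Jones polynomial, and, if necessary, Kauffman or HOMFLYPT polynomial, against every Montesinos link of compatible crossing number. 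This mirrors exactly the closing step of each previous subsection.
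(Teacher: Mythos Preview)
Your proposal follows the $r=1$ template, but the paper's proof takes a much shorter and more direct route that avoids the hyperbolicity argument entirely. The key observation you are missing is that, because $r=2$ is even, the branch set $(L_m)_f$ is a two-component link for \emph{every} $m$, and one of its components can be identified explicitly and uniformly in $m$: it is the Montesinos knot $J_m=K[1/3,-1/3,1/(m-1)]$ (the other component is an unknot). Since $J_m$ is never a torus knot, Criterion~\ref{crit:torus} rules out the Seifert-link possibility for all $m$ simultaneously; and since $J_m$ is a $2$-bridge knot only when $m\in\{0,2\}$, Criterion~\ref{crit:2bridge} rules out the Montesinos-link possibility for all other $m$. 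The single remaining case $m=2$ is then dispatched by a short crossing-number and determinant count. No bound on $|m|$, no tangle $\mathcal S'$, and no cable-space analysis are needed.

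Your sketch is not wrong in spirit, but it has two weaknesses. First, the hyperbolicity step is entirely speculative: you do not identify any specific fillings of $Z'$, and the phrase ``I expect to find'' is not a proof. Second, and more tellingly, you already recognize in your final paragraph that for most $m$ one component of $(L_m)_f$ fails to be $2$-bridge or torus, which is exactly the paper's argument---but you apply it only after restricting to $|m|\le 8$. Once you see that this component is $J_m=K[1/3,-1/3,1/(m-1)]$ for all $m$, the restriction becomes superfluous and the entire first half of your plan can be discarded.
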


\begin{proof}
We can precede as above, by analyzing the result of $1$-surgery on the quotient knot $(K_m)_f$.  This gives a two-component link in $S^3$, $(L_m)_f$, such that the double cover of $S^3$ branched along $(L_m)_f$ is the surgery space $P(3,-3,2m+1)(2)$.  Thus, to show this surgery space is not a small Seifert fibered space, it suffices to show that $(L_m)_f$ is neither a Montesinos link of length three with two components nor a Seifert link with two components.

\begin{figure}
\centering
\includegraphics[scale = .55]{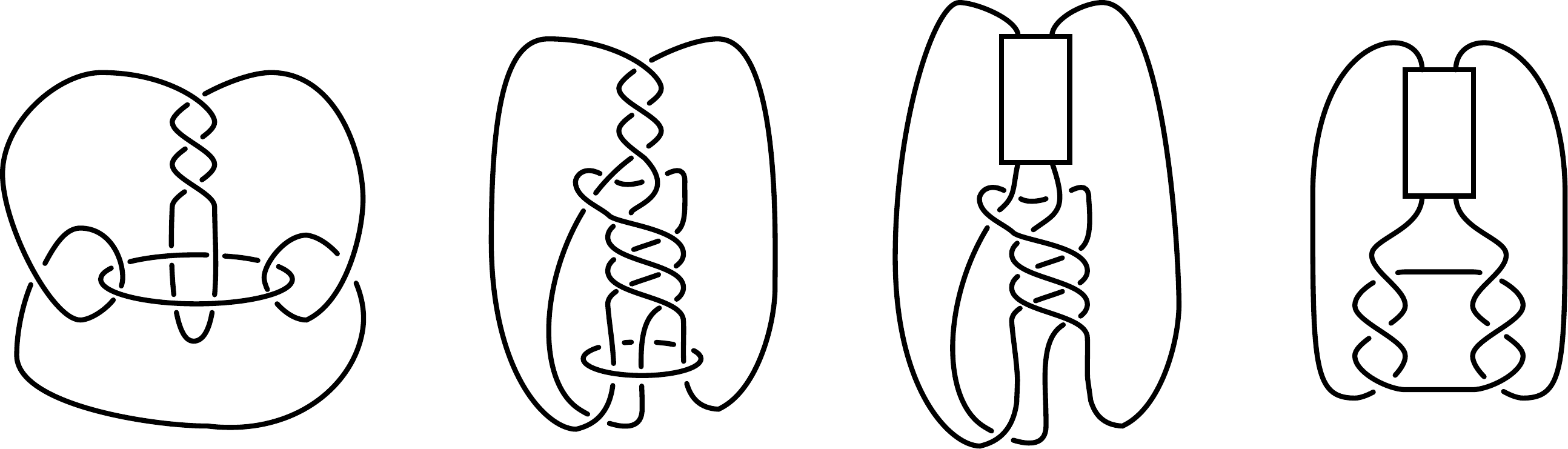}
\put(-80,45){\huge$\widetilde{}$}
\put(-41,80){\scriptsize$m$-1}
\put(-145,90){$m$}
\put(-100,-15){(c)}
\put(-250,-15){(b)}
\put(-370,-15){(a)}
\caption{(a) The link $(K_m)_f\cup\text{Fix}(f$) in the quotient, (b) the link $(L_m)_f$ resulting from 1-surgery on the unknotted component (note that the core of the surgery torus is a component of the resulting link), and (c) the tangle formed by removing the $m$-twist area of the knotted component $J_m$ of the resulting link.  Here, $m=-3$.}\label{figure:P(3,-3,2m+1)KfLfr=2}
\end{figure}

First, we note that $(L_m)_f$ is the union of the unknot with the knot $J_m=K[1/3,-1/3,1/(m-1)]$ (see the right half of Figure \ref{figure:P(3,-3,2m+1)KfLfr=2}).  Since $J_m$ is never torus knot, by Criterion \ref{crit:torus}, $(L_m)_f$ is never a Seifert link.  If $m=0$ or $m=2$, then $J_0=K[-2/9]$ and $J_2=K[2/9]$, respectively; otherwise, $J_m$ is not a 2-bridge knot, so $(L_m)_f$ is not a Montesinos link, by Criterion \ref{crit:torus}.

If $m=0$, then we are considering 2-surgery on $P(3,-3,1)$, which is $K[2/9](2)$.  By the classification of Brittenham and Wu \cite{britt-wu:2bridge}, this space is Seifert fibered.  If $m=2$, then we are considering the space $P(3,-3,5)(2)$, and if $(L_m)_f$ is a Montesinos link, then it has the form $K[x,y,z]$, where $z=2/9$ or $4/9$ and $x$ and $y$ have even denominator.

Now, since $(L_2)_f$ has a diagram with 12 crossings, and since the $z$--tangle would contribute 6 crossings, if $(L_2)_f$ were to be a two component length three Montesinos link, then the $x$-- and $y$--tangles must contribute at most 6 crossings.  Without loss of generality, we can assume $x=\pm 1/2$ and $y=\pm 1/2$ or $\pm 1/3$ or $\pm 3/4$.  However, an easy check shows that the determinant of such Montesinos links cannot be 2.

\end{proof}

Finally, we consider the case of $3$-surgery on hyperbolic pretzel knots $K_m$ of the type $P(3,-3,2m+1)$.  These knots have period 2, so we can analyze the surgery space $K_m(3)$ as the double branched cover of $(K_m)_f(3/2)$, where $(K_m)_f$ is the factor knot $K_m/f$ for the self diffeomorphism $f:S^3\to S^3$ of order two that preserves $K_m$.  In this case, $(K_m)_f$ is the unknot, so $(K_m)_f = -L(3,2)$.  Let $L_m$ denote the image of Fix$(f)/f$ in the surgery space $(K_m)_f(3/2)$, i.e., $L_m$ is the branching set for the double covering.  (Note that in our convention $p$-surgery on the unknot is the lens space $-L(p,1)$.)

\begin{proposition}
 $P(3,-3,2m+1)(3)$ is never a small Seifert fibered space for $m\not=0,-1$.
\end{proposition}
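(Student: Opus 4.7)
The plan is to mirror the tangle-based strategy used to analyze $K_m(1)$ and adapt it to the lens-space setting. Since $r=3$ is odd and the factor knot $(K_m)_f$ is the unknot, the quotient surgery $(K_m)_f(3/2) = -L(3,2)$ is a lens space, and $K_m(3)$ is the double cover of $-L(3,2)$ branched along the knot $L_m$. I would first draw $L_m$ explicitly inside $-L(3,2)$ by starting from the link $(K_m)_f \cup \mathrm{Fix}(f) \subset S^3$, performing $3/2$-surgery on $(K_m)_f$, and tracking the image of $\mathrm{Fix}(f)/f$; the goal is to obtain a picture in which the $m$-dependent twist region of $L_m$ is clearly visible as a rational tangle.

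Next, I would isolate this $m$-twist region as a ball, remove it to form a tangle $\mathcal S \subset -L(3,2)$, and let $Z$ denote the double cover of $-L(3,2)$ branched along $\mathcal S$, so that $Z$ is a compact $3$-manifold with a single torus boundary component and $Z(-1/m) = K_m(3)$. Several other fillings should be computable directly from the tangle picture, namely $Z(\infty)$, $Z(0)$, $Z(-1)$, $Z(-1/2)$, etc., and each corresponds to $3$-surgery on $P(3,-3,2m'+1)$ for a small value of $m'$, or to surgery on a $2$-bridge knot (for instance $Z(\infty) = P(3,-3,1)(3)$, a surgery on $\mathcal{K}[-2/9]$).

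Assuming for contradiction that $K_m(3)$ is a small Seifert fibered space for some $m$ with $|m| \geq 9$, I would then argue, exactly as in the proofs for $K^\pm_m$ and for $K_m(1)$, that $Z$ must be hyperbolic: distinct irreducible, non-Seifert-fibered fillings rule out $Z$ being reducible, $\partial$-reducible, or Seifert fibered; and if $Z$ were toroidal with essential separating torus $F$ giving $Z = A \cup_F B$, then Lemma \ref{lemma:cable} would force $B$ to be a cable space whose cabling slope $\gamma$ lies at distance one from $\infty$, $-1/m$, and at least one other known filling slope, which constrains $\gamma$ to a specific integer and contradicts the computed homeomorphism type of $Z(\gamma)$ (neither reducible nor a lens space). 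Once $Z$ is shown to be hyperbolic, Theorem \ref{thm:lack-meyer} forces $\Delta(-1/m,\infty) = |m| \leq 8$, a contradiction.

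The main obstacle is the final step, handling the finite list $|m| \leq 8$ with $m \notin \{0,-1\}$. Because $L_m$ sits in a lens space rather than in $S^3$, Criterion \ref{crit:width} and Method 1 do not translate directly. My plan here is to combine a homological check ($|H_1(K_m(3))|=3$ being compatible with only a very restricted family of small Seifert-fibered candidates $S^2(e;p_1/q_1,p_2/q_2,p_3/q_3)$) with a case-by-case identification of $K_m(3)$ for the few remaining $m$, using the published catalogues of exceptional surgeries on these pretzel knots (e.g.\ the Mattman--Miyazaki--Motegi results for small $|m|$) and, where necessary, a direct hyperbolicity verification of $K_m(3)$ via SnapPy. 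This ad hoc step is the least uniform part of the argument and is where the combinatorial toolkit used elsewhere in the paper gives out.
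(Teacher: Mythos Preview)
Your approach diverges substantially from the paper's, and the divergence is instructive. You attempt to replicate the $r=1$ tangle-hyperbolicity-bound-then-check scheme, but the paper exploits a structural fact specific to $r=3$: since the quotient $(K_m)_f(3/2) = -L(3,2)$ is a genuine lens space (not $S^3$), Corollary \ref{corollary:FactorKnotType} and the Miyazaki--Motegi analysis force any putative small Seifert structure on $K_m(3)$ to have the form $S^2(3,3,c)$, with the order-$3$ fibers paired by the involution. The paper then passes to the $3$-fold cyclic cover of $-L(3,2)$, which is $S^3$, and observes that the lift $\widetilde{L_m}$ of the branch locus would have to be the Montesinos knot $K[-2;5/7,5/7,5/7]$ (the determinant pins down $c=7$ and the specific Seifert invariants). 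A direct recursive Jones polynomial computation for $\widetilde{L_m}$ then rules this out for all $m\neq -1$ simultaneously. No bound on $m$ is ever needed, and no case-by-case endgame arises.

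Your proposal, by contrast, has a genuine gap precisely at the endgame you flag. Bounding $|m|\le 8$ via hyperbolicity of $Z$ may well be feasible (though you should verify that enough fillings of $Z$ are identifiable from the lens-space tangle picture to run the cable-space and graph-manifold contradictions), but your plan for the finitely many remaining $m$ is not a proof: the Mattman--Miyazaki--Motegi results do not classify all exceptional surgeries on these knots (that is what this paper does), and a SnapPy hyperbolicity check is numerical rather than rigorous. The homological constraint $|H_1|=3$ is far too weak on its own. The paper's $3$-fold-cover idea is exactly the missing ingredient that converts the lens-space branch locus problem back into an $S^3$ link problem amenable to standard invariants, and it is worth internalizing as a technique for the period-$2$, lens-space-quotient situation more generally.
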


\begin{proof}

\begin{figure}
\centering
\includegraphics[scale = .65]{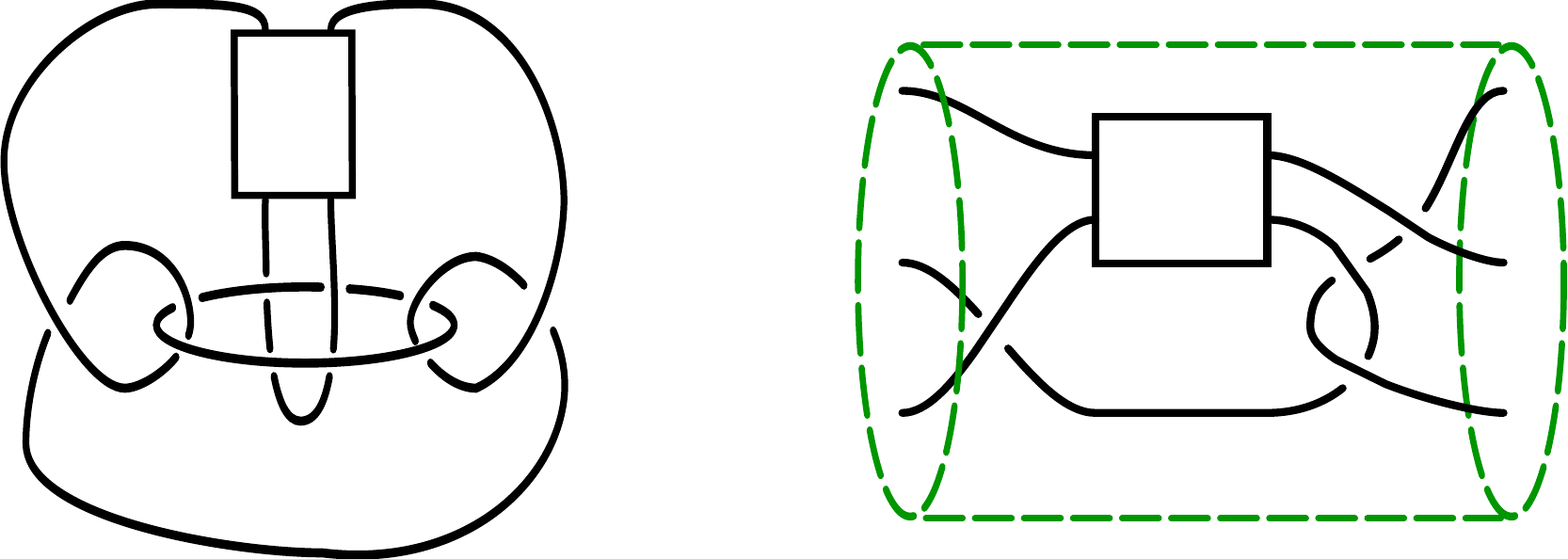}
\put(-259,85){\large $m$}
\put(-88,72){\large $-m$}
\caption{Above, on the left we have the link $(K_m)_f\cup\text{Fix}(f)/f$ in the quotient, and, on the right, we have the result of $(3/2)$-surgery on the unknotted component: the link $L_m$ contained in a solid torus (simply view the knot as lying in the solid torus that comprises the exterior of unknot).}
\label{figure:P(3,-3,2m+1)r=3}
\end{figure}

By the analysis of \cite{miyazaki-motegi_Seifert}, the link $L_m$ is actually a 2-bridge link, contained in the solid torus that, together with the surgery solid torus, comprises half of the genus-1 Heegaard splitting of the lens space $-L(3,2)$ (see Figure \ref{figure:P(3,-3,2m+1)r=3}).  Thus, if we pass to a 3-fold cover, the lift, $\widetilde{L_m}$, of $L_m$ will be a length three Montesinos link, contained in one half of the standard genus one Heegaard splitting of $S^3$.  We now describe how to see this.

The two solid tori that comprise the splitting of $-L(3,2)$ are attached via a map which sends the meridian of one to a ($-3/2)$--curve on the boundary of the other.  Passing to the 3-fold cover changes the image of the attaching map to a $(-1/2)$--curve, which gives $S^3$.  This lift simply triplicates the knotted part of $L_m$.  However, if we want to think about this lift as a knot in the standard 3--sphere, we must apply a self-diffeomorphism of $S^3$ to get the standard Heegaard splitting of $S^3$ (i.e., where the meridian of one torus is glued along a $(1/0)$--curve).  This final step introduces two full negative twists of the strands of $L_m$.  The result is a link $\widetilde L_m$ in $S^3$.  See Figure \ref{figure:P(3,-3,2m+1)TripleCover}.

\begin{figure}
\centering
\includegraphics[scale = .53]{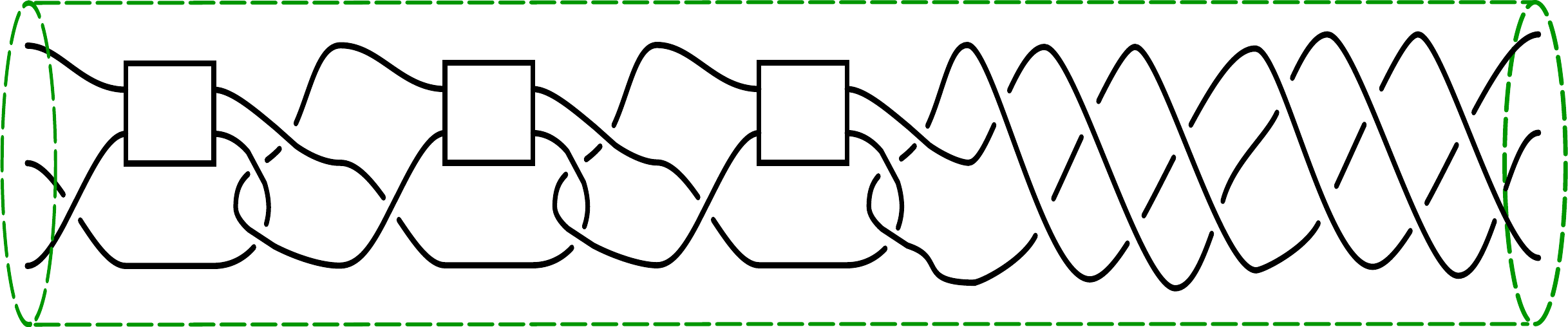}
\put(-262,50){-$m$}
\put(-338,50){-$m$}
\put(-188,50){-$m$}
\caption{The link $\widetilde{L_m}$ in $S^3$, which is the triple cover of $L_m$ in $-L(3,2)$.}
\label{figure:P(3,-3,2m+1)TripleCover}
\end{figure}

Since $P(3,-3,2m+1)(3)$ has quotient $-L(3,2)$, it has the form $S^2(3,3,c)$, where the exceptional fiber of multiplicity $c$ corresponds to the branching locus, $(L_m)_f$.  It follows that the lift $\widetilde L_m$ should be a Montesinos link of type $K[c,c,c]$.  From this, it follows that $c^2$ divides the determinant of $\widetilde L_m$.  We can calculate the determinant of this lift to be 49, independent of $m$, so it follows that $c=7$.  Thus, $P(3,-3,2m+1)(3)$ is $S^2(3,3,7)$, and $\widetilde L_m = K[a/7, b/7, b/7]$.

By considering the determinant (of the corresponding Montesinos knot), we see that $P(3,-3,2m+1)(3)$ must have the form $S^2(-1/3, -1/3, 5/7)$ and that $\widetilde L_m = K[-2;5/7,5/7,5/7]$ (being the triple cover of the two bridge knot $K[5/7]$ in $S^1\times D^2$, see \cite{miyazaki-motegi_Seifert}).  Let $V(q)$ be the Jones polynomial of $K[-2;5/7, 5/7, 5/7]$.  A straightforward calculation gives an expression for the Jones polynomial of $\widetilde L_m$: $$V_{\widetilde L_m}(q) = \begin{cases} q^{-3m-3}(V(q)-1)+1 & \text{ if $m$ is odd} \\ q^{-3m}(21-V(q^{-1}))+1  & \text{ if $m$ is even}\end{cases}$$

It follows that $\widetilde L_m$ is not $K[-2;5/7, 5/7, 5/7]$ unless $m=-1$.  In this case the knots \emph{are} the same, which reflects the fact that $P(3,-3, -1)(3) =S^2(3,3,7)$; however, this case is not of interest to us.  For any other value of $m$, we have shown that $P(3,-3, 2m+1)(3)$ cannot be a small Seifert fibered space.
\end{proof}

\subsection{Completing the proof of Proposition \ref{prop:(3,-3,odd)}}

It remains to show that the knots $L_{m} = \mathcal T_m(1)$ are neither Montesinos knots, nor Seifert knots, for $m\in[-8,8]\backslash\{-1,0,1,2\}$ (notice, when $m=1,2$ we \emph{do} get small Seifert fibered surgeries, and when $m=-1,0$, we have $P(3,-3,\pm1)$, which are 2-bridge knots).

In fact, for these knots we have that $s(L_{m})\not=2g(L_m)$, so they cannot be torus knots by Criterion \ref{crit:positive}.   Furthermore, we can apply Method 1 to show that $L_m$ is never a Montesinos knot.

\subsection{Case (4)}

Finally, consider the case when $K_m$ is a hyperbolic pretzel knot of the form $P(3,3,2m,-1)$ with $m>1$.  We note that such knots are often considered to be non-pretzel Montesinos knots.  If $m=1$, then $K_1=P(-2,3,3)$, and is not hyperbolic.   We see that $K_m$ has a cyclic of period 2 (see Figure \ref{figure:P(3,3,2m,-1)KnotAndFactorKnot}) with factor knot $K_f = T_{2,3}$ as well as a strong inversion.  Since $K_f(r/2)$ must be a lens space surgery on the trefoil, it follows that $\Delta(r/2,6) = 1$.  In this case we consider the possibility of non-integral exceptional surgeries.  We note that the link $L_{m,r}=\mathcal T_m(-r)$ has $4m+10-r$ half-twists.  See Figure \ref{figure:P(3,3,2n,-1)Lf}.  This can be seen by carefully keeping track of the framing curve throughout the Montesinos trick.

\begin{figure}
\centering
\includegraphics[scale = .45]{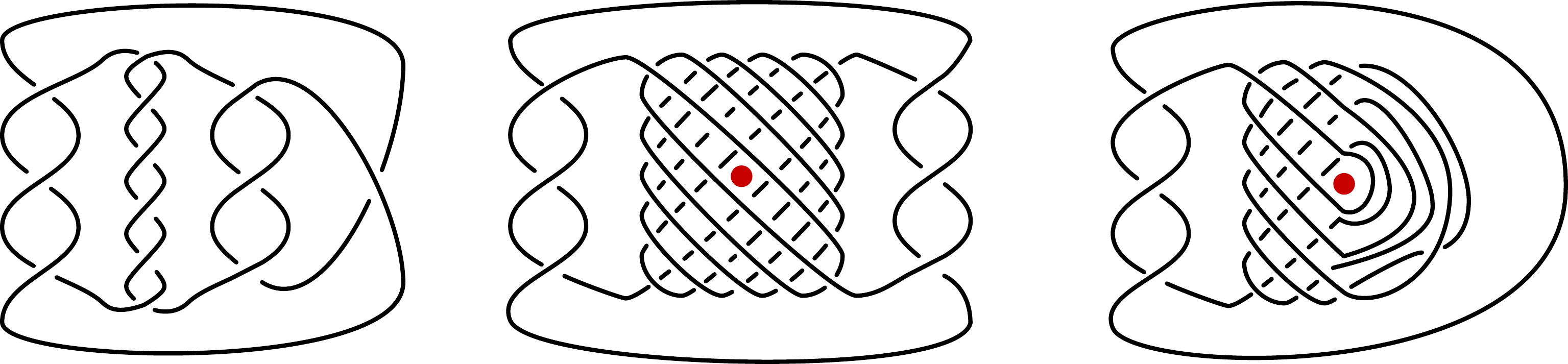}
\put(-360,-15){(a)}
\put(-220,-15){(b)}
\put(-70,-15){(c)}
\caption{The Montesinos knot $P(3,3,2m,-1)$ in (a) standard form and (b) pillowcase form, and (c) the factor knot resulting from rotation about the axis perpendicular to the page.}
\label{figure:P(3,3,2m,-1)KnotAndFactorKnot}
\end{figure}

\begin{proposition}
The Montesinos knots $P(3,3,2m,-1)$ with $m>1$ admit no small Seifert fibered surgeries.
\end{proposition}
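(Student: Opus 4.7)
The plan is to follow the same template used in the earlier sections: exploit the symmetries of $K_m = P(3,3,2m,-1)$ to restrict the candidate surgery slopes to a finite set, bound the parameter $m$ by a tangle/hyperbolicity argument, and then handle the remaining finite list with Method 1 together with Criteria \ref{crit:width}--\ref{crit:torus}.

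The first step is to use the period two symmetry. Since the factor knot is $K_f = T_{2,3}$, Corollary \ref{corollary:FactorKnotType}(1)--(2) forces every small Seifert fibered surgery slope $r/s$ to satisfy $|r - 12 s| \leq 2$. For integer slopes this restricts $r$ to $\{10, 11, 13, 14\}$. Because $K_m = K[-1; 1/3, 1/3, 1/(2m)]$ is one of the families listed in the introduction that could potentially admit a non-integral small Seifert fibered surgery, I also allow $s \geq 2$, keeping the constraint $|r-12s| \leq 2$ in force throughout.

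The second step is to bound $m$. Using the strong inversion of $K_m$, I apply the Montesinos trick and obtain the quotient tangle $\mathcal{T}_m$, then form the auxiliary tangle $\mathcal{S}_r$ by removing a three-ball containing the $2m$-twist region of $\mathcal{T}_m(-r)$. Let $N_r$ be the double branched cover of $\mathcal{S}_r$, so that filling along the slope encoding $-1/m$ on the new boundary component recovers $K_m(r/s)$ (up to the framing shift $4m+10-r$ already recorded in Figure \ref{figure:P(3,3,2n,-1)Lf}). I then identify several fillings $N_r(0)$, $N_r(\infty)$, $N_r(\pm 1)$ with explicit small Seifert fibered or graph manifolds coming from surgery on simpler pretzel knots $P(3,3,\pm 2, -1)$ and $P(3,3,0,-1)$ and torus knots; having distinct irreducible fillings, a non-Seifert-fibered non-reducible filling, and (via Lemma \ref{lemma:cable}) an incompatible cabling slope combine to force $N_r$ to be hyperbolic. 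Theorem \ref{thm:lack-meyer} then bounds both $|m|$ and, via the allowed slope $\infty$, the denominator $s$ of any exceptional $r/s$ filling by $8$.

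For the remaining finitely many pairs $(m, r/s)$ I examine the quotient link $L_{m,r/s} = \mathcal{T}_m(-r/s)$. When $L_{m,r/s}$ has two components, tracking the Montesinos trick shows it is the union of a trefoil-type component with a knot of the form $K[2/3, 2/3, p/q]$ where $p/q$ depends explicitly on $(m, r, s)$; this component is generically neither a torus knot nor a 2-bridge knot, so Criteria \ref{crit:torus} and \ref{crit:2bridge} dispose of those cases. When $L_{m,r/s}$ is a knot I apply Criterion \ref{crit:positive} (using that $s(L) \neq 2g(L)$ for these diagrams) to rule out torus knots, and Method 1 together with Criterion \ref{crit:width} to rule out length three Montesinos knots, invoking the Kauffman or HOMFLYPT polynomial on the handful of pairs that are not separated by Alexander polynomial, Jones polynomial, and Khovanov homology alone. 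The main obstacle is the hyperbolicity argument for $N_r$: identifying enough exceptional fillings of $N_r$ with the correct geometric type, in the presence of the framing-shift $4m+10-r$, requires careful bookkeeping since several of these fillings themselves lie on the boundary of previously classified exceptional surgery families.
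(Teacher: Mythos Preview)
Your overall strategy matches the paper's: bound the slope via the period-two symmetry with factor knot $T_{2,3}$ (giving $|r-12s|\le 2$), bound $m\le 8$ by a hyperbolicity argument on the double branched cover $N_r$ of the tangle obtained by excising the twist region, and then dispose of the finite list via the strong inversion and the criteria of Subsection~\ref{subsec:knottheory}. The paper in fact reuses the $K_m^+ = P(3,3,-2m)$ computation verbatim for the hyperbolicity step (Figure~\ref{figure:P(3,-3,2m)MrFillings}), with fillings $N_r(\infty)=D^2(2,3)\cup_{T^2}D^2(2,3)$, $N_r(0)$ toroidal, and $N_r(1)$ small Seifert fibered, rather than the surgeries on $P(3,3,\pm2,-1)$ you describe.

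Your identification of the two-component links is off, however. The paper finds (Figure~\ref{figure:P(3,3,2n,-1)Lf}) that $L_{m,r}$ is the union of an \emph{unknotted} component with $J_m = K[-1/3,-1/3,1/m]$; the $r$-twists live entirely on the unknotted side, so $J_m$ does not depend on $r$ or $s$. You appear to be conflating this with Case~(2), where one component \emph{is} a trefoil. Fortunately your conclusion survives, since $J_m$ is still neither a torus knot nor a $2$-bridge knot for $m>1$, and Criteria~\ref{crit:2bridge} and~\ref{crit:torus} apply. In the knot case you are also overcomplicating: the paper observes that $|\Kh(L_{m,r})|\ge 4$ and $s(L_{m,r})<\mathrm{br}(\Delta_{L_{m,r}})$ for all relevant $(m,r)$, so Criteria~\ref{crit:width} and~\ref{crit:positive} alone suffice and no appeal to Method~1 or to Kauffman/HOMFLYPT polynomials is needed here.
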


\begin{proof}

\begin{figure}
\centering
\includegraphics[scale = .5]{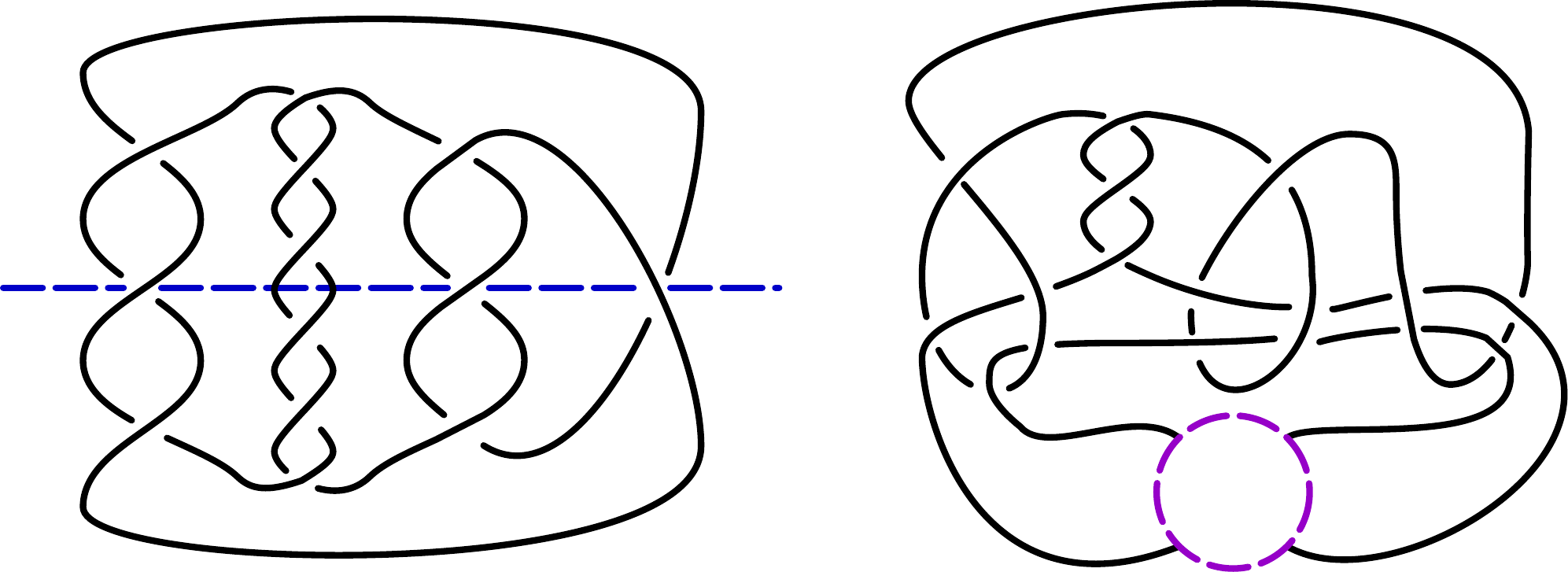}
\caption{The Montesinos knot $P(3,3,2m,-1)$, shown with the axis of its strong inversion, and the resulting tangle $\mathcal T_m$.  Here $m=3$.}
\label{figure:P(3,3,2n,-1)StrongInversion}
\end{figure}

We begin by noting that we can show that $m\leq 8$ just as we did when dealing with $K_m^+$, earlier in this section (recall, Figure \ref{figure:P(3,-3,2m)MrFillings}), so we will omit the details.  Consider the quotient links $L_{m,r}$ obtained via the Montesinos trick.  When $L_{m,r}$ is a link, it consists of an  unknotted component, together with a component $J_m$, which is the knot $K[-1/3,-1/3,1/m]$ (see Figure \ref{figure:P(3,3,2n,-1)Lf}).  

Since $J_m$ is never a torus knot or a 2-bridge knot (for $m>1$), $L_{m,r}$ is never a Seifert link or a Montesinos link, by Criteria \ref{crit:torus} and \ref{crit:2bridge}, respectively.

When $L_{m,r}$ is a knot, we see that $|\text{Kh}(L_{m,r})|\geq 4$ and $s(L_{m,r})<\text{br}(\Delta_{L_{m,r}}(t))$, so, by Criteria \ref{crit:width} and \ref{crit:positive}, $L_{m,r}$ is never a Montesinos knot or a torus knot.

\begin{figure}
\centering
\includegraphics[scale = .6]{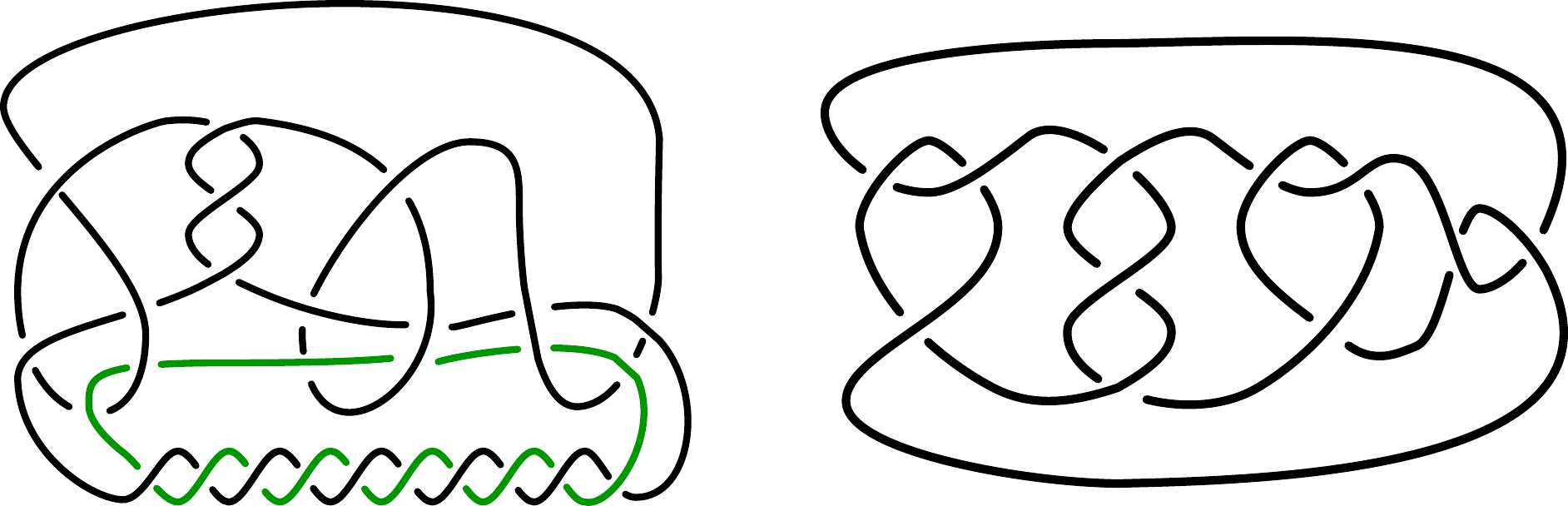}
\caption{The link $L_{m,r}$ and the component $J_m$.  Here $m=3$ and $r=12$ (hence, $4m+10-r=10$ twists).}
\label{figure:P(3,3,2n,-1)Lf}
\end{figure}

\end{proof}

\section{The case of $(3,4,5)$}\label{section:345}

We next turn our attention to the pretzel knots $P(3,\pm4,\pm5)$ and $P(3,4,5,-1)$.  We will follow the same program in which we make use of the strong inversion and analyze the quotient link along with its double branched cover.  Because these are not infinite families of knots, we do not need to argue to restrict any parameters as we have above.  In the case of the length three pretzel knots, since these knots bound punctured Klein bottles at slope $\alpha_0$, we only need to consider fillings $\alpha_r = \alpha_0-r$ at distance at most 8 from $\alpha_0$ and from $1/0$.  Our only task here is to show that the quotient links are not Seifert links nor Montesinos links.  In the case of $P(3,-4,5)$, we must consider non-integral surgeries.  The length four pretzel knot $P(3,4,5,-1)$ may also admit non-integral exceptional surgeries, and, in this case, there is no exceptional surgery by which we can bound the possible surgery slopes.  On the other hand, if $P(3,4,5,-1)(r/s)$ is exceptional, then $|s|\leq 4$, since it is known that this pretzel knot is not tunnel number one \cite{msy:tunnel}, and Baker, Gordon, and Luecke have recently shown that knots of tunnel number greater than one cannot have non-integral small Seifert fibered surgery slopes whose denominator is 5 or larger \cite{BGL}.  The pictures corresponding to the analysis of $P(3,4,5,-1)(r/s)$ are nearly identical to the diagrams in Figures \ref{figure:P(3,3,2n,-1)StrongInversion} and \ref{figure:P(3,3,2n,-1)Lf} (which corresponded to the analysis for $P(3,3,2m,-1)$ in the previous section; just let $m=2$, and change a 1/3 tangle to a 1/5 tangle) and the reader is encouraged to keep these in mind throughout this section.

\begin{reptheorem}{thm:main3}
The pretzel knots $P(3,\pm4, \pm5)$ and  $P(3,4,5,-1)$ admit no small Seifert fibered surgeries.
\end{reptheorem}

\begin{proof}

It is shown below, in Lemma \ref{lemma:khisom}, that the quotient links $L_{r/s}$ corresponding to the surgery spaces $P(3,4,5,-1)(r/s)$ have Khovanov homology of width at least 4 for $|s|\leq 4$ when $L_{r/s}$ is a knot. Thus, Criterion \ref{crit:width} suffices to prove that the knots $L_{r/s}$ are not Montesinos knots. Lemma \ref{lem:alex} shows that $L_{r/s}$ is never a torus knot.   If $L_{r/s}$ is a link, then it is the union of the unknot with the Montesinos knot $K[-2;1/2,2/3,2/5]$, which is never a 2-bridge knot or a torus knot, so $L_{r/s}$ is never a Montesinos link or a Seifert link, by Criteria \ref{crit:2bridge} and \ref{crit:torus}.  

Now, we consider the length three pretzel knots.  Let $L_{\pm,\pm,r}$ be the quotient link resulting from the Montesinos trick, applied to $P(3,\pm4,\pm5)$.  When $L_{\pm,\pm,r}$ is a link, $L_{\pm,\pm,r}=U\cup J$, where $J=K[2/3,\pm1/2,\pm2/5]$, which is never the unknot, a two-bridge knot, or a torus knot. Thus, by Criteria \ref{crit:2bridge} and \ref{crit:torus}, $L_{\pm,\pm,r}$ is never a Montesinos link with two components or a Seifert link with two components.

When $L_{\pm,\pm,r}$ is a knot, $2g(L_{\pm,\pm,r})\not=s(L_{\pm,\pm,r})$, so $L_{\pm,\pm,r}$ is not a torus knot, by Criterion \ref{crit:positive}, and we can use Method 1 to show that $L_{\pm,\pm,r}$ is never a Montesinos knot.

\end{proof}

\begin{lemma}\label{lem:alex}
$L_{r/s}$ is never a torus knot.
\end{lemma}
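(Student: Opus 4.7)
The plan is to apply Criterion \ref{crit:det}, comparing $\det(L_{r/s})$ with $s(L_{r/s})+1$. First, since the Montesinos trick realises the branched double cover of $L_{r/s}$ as the Dehn surgery space $P(3,4,5,-1)(r/s)$, and $H_1$ of this surgery is $\Z/|r|$, we get $\det(L_{r/s}) = |r|$. On the other hand, for any torus knot one has $\det(T(p,q)) \leq \max(p,q)$, while $s(T(p,q)) = (p-1)(q-1)$, so $\det(T(p,q)) \leq s(T(p,q))+1$ for all but a few small cases. Thus whenever $|r| > s(L_{r/s}) + 1$, the knot $L_{r/s}$ is not a torus knot.

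Second, I would produce an explicit upper bound on $s(L_{r/s})$ from the Montesinos-trick diagram of $L_{r/s}$. That diagram is built by plugging the rational tangle $\mathcal R(r/s)$ into the fixed quotient tangle $\mathcal T$ for $P(3,4,5,-1)$. Since $|s|\leq 4$ (by the Baker--Gordon--Luecke tunnel-number bound already invoked in Section \ref{section:345}), the continued fraction expansion of $r/s$ has bounded length and the positive/negative crossing counts in a Seifert-oriented diagram grow only logarithmically (via the continued fraction), while $\det(L_{r/s})=|r|$ grows linearly. Using the classical bound $|s(K)| \leq c_+(D)-c_-(D)$ (or $s(K) \leq 2g_4(K)$ estimated from the Seifert surface of the diagram) one reduces the infinite family to a finite list of small values of $|r|$ for which the inequality $|r| > s(L_{r/s})+1$ has not been verified.

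Finally, for the remaining finite list of slopes $r/s$ with $|s|\leq 4$, I would compute $\Delta_{L_{r/s}}(t)$ directly with the \texttt{KnotTheory`} package. The proof concludes by checking, for each such $r/s$, that $\Delta_{L_{r/s}}(t)$ does not match any torus knot Alexander polynomial $(t^{pq}-1)(t-1)/((t^p-1)(t^q-1))$ with $(p-1)(q-1)$ equal to the observed breadth---a quick check because torus knot Alexander polynomials are monic with a highly restricted coefficient pattern.

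The main obstacle is carrying out step two cleanly: controlling $s(L_{r/s})$ well enough to reduce to a manageable computer check. Turning the qualitative statement ``$s(L_{r/s})$ grows only logarithmically in $|r|$'' into an explicit numerical threshold requires a careful accounting of the crossings contributed by the $r/s$-tangle versus those fixed by $\mathcal T$; once that is done, the remainder of the argument is routine.
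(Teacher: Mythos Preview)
Your proposal has a genuine gap in step two. The claim that the crossing count in the rational tangle $\mathcal R(r/s)$ grows only logarithmically in $|r|$ is false. With $|s|\leq 4$ the continued fraction of $r/s$ has bounded \emph{length}, but its \emph{entries} are not bounded: writing $r/s=a/b+n$ with $|b|\leq 4$, the tangle carries an $n$--twist region contributing roughly $|n|\approx |r|/|s|$ crossings. Hence the writhe of a diagram for $L_{r/s}$, and with it any diagram-based upper bound on $|s(L_{r/s})|$, grows linearly in $|r|$. Indeed, from the Khovanov computations in Table~1 one sees that the supporting diagonals $j-2i$ of $\Kh(L_t)$ shift linearly in $t$, so $s(L_t)$ itself moves linearly with $t$. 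Thus there is no reason to expect $\det(L_{r/s})=|r|$ to eventually dominate $s(L_{r/s})+1$, and Criterion~\ref{crit:det} does not reduce the problem to a finite check in the way you suggest.

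The paper takes a different route that sidesteps this growth issue entirely. For half- and quarter-integral slopes it observes that changing one or two crossings in the $r/s$--tangle unknots $L_{r/s}$, so the unknotting number is at most two; since $u(T(p,q))=(p-1)(q-1)/2$, only the trefoil and $T(2,5)$ survive, and these are ruled out because they are alternating while $|\Kh(L_{r/s})|\geq 4$. For integral and third-integral slopes it applies the oriented skein relation at the $n$--twist region to get a closed-form recursion for $\Delta_{L_{a/b+n}}(t)$ and reads off that the second-highest coefficient $|a_{m-1}|$ stabilises to a value $\geq 2$ for all large $|n|$; since torus-knot Alexander polynomials have all $|a_i|\leq 1$, this finishes the argument without any case-by-case computer check.
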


\begin{proof}
Recall that $|s|\leq 4$, and write $r/s = a/b+n$.  A general reference for the facts in this proof is \cite{Lickorish}.  If $|b|=2$ or $|b|=4$, then $L_{r/s}$ has unknotting number one or two.  Since the unknotting number of a $(p,q)$--torus knot is $(p-1)(q-1)/2$, we could only have the trefoil or $T(5,2)$.  However, both of these knots are alternating, so they cannot have wide Khovanov homology, as $L_{r/s}$ will be shown to have below.

If $a/b=0$ or $b=3$, we can apply the oriented Skein relation to the $n$--twist region of $L_{a/b+n}$ to calculate a recursive formula for the Alexander polynomials $\Delta_{L_{r/s}}(t)$.  In general, we write $$\Delta_K(t) = a_0+\sum_{i=1}^m a_i(t^{-i}+t^i),$$ and, applying the Skein relation to these knots, we calculate that $$\Delta_{L_{a/b+n}}(t) = k_1(-t^{-1/2})^n+k_2(t^{1/2})^n,$$ where $k_1$ and $k_2$ are fixed polynomials of small degree, depending on $a/b$ and the sign of $n$.  In any event, we see that $a_{m-1}$ for $L_{a/b+n}$ will be constant as $|n|$ increases for a fixed $a/b$.  In fact, we can calculate that $|a_{m-1}|$ takes values $3, 2, 5, 5, 7,$ and 4, respectively, for the following cases:  $a/b=0$ and $n<0$,$a/b=0$ and $n>0$, $a/b=1/3$ and $n<0$, $a/b=1/3$ and $n>0$, $a/b=-1/3$ and $n<0$, and $a/b=-1/3$ and $n>0$.

If $K$ is a torus knot, then $|a_i|\leq 1$ for all $i$.  It follows that $L_{r/s}$ is never a torus knot.
\end{proof}

\subsection{A Khovanov homology diversion}

In order to prove that no surgery on $K=P(3,4,5,-1)$ is a small Seifert fibered space, we will argue that the quotient link, $L_{r/s}$ corresponding to $K(r/s)$ has  Khovanov cohomology that is too wide when $L_{r/s}$ is a knot and $|s|\leq4$, i.e., $|\Kh(L_{r/s})|\geq4$.

We will need one important fact about Khovanov cohomology (see, for example, \cite{turner}, for an overview).  Let $D$ be a diagram for a knot, and let $D_0$ and $D_1$ be the diagrams identical to $D$, except that a single crossing has been resolved as pictured below.
$$
\begin{array}{c}
\includegraphics[scale=1]{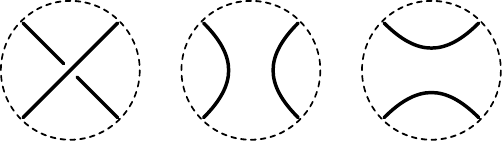} \\
\ D \hspace{.6in} D_0 \hspace{.6in} D_1
\end{array}
$$
Define the value $c$ to be
$$c=\text{ (number of negative crossings in $D_0$) -- (number of negative crossings in $D$) }.$$
Then there is a long exact sequence relating the Khovanov cohomology groups:
$$\longrightarrow \Kh^i_{j+1}(D_1)\longrightarrow \Kh^i_{j}(D)\longrightarrow \Kh^{i-c}_{j-3c-1}(D_0)\longrightarrow \Kh^{i+1}_{j+1}(D_1)\longrightarrow$$
In our examples, one of $D_0$ or $D_1$ will represent a simple knot type (unknot, Hopf link, trefoil, or (2,4)--torus link), and so the corresponding $\Kh$ will have a small range of support.  Outside of this range, there will be isomorphisms between the graded components of $\Kh(D)$ and those of $\Kh(D_1)$ or $\Kh(D_0)$.  We will make use of these isomorphism below.

\begin{lemma}\label{lemma:khisom}
Let $r/s\in\Q$ with $r/s = a/b+t$ for $a/b\in\{0,1/2,\pm1/3,\pm1/4\}$ and $t\in\Z$.  Then $|\Kh(L_{r/s})|\geq4$.
\end{lemma}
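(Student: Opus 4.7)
The strategy is induction on $|t|$, with the long exact sequence in Khovanov cohomology as the engine that propagates the width$\,\geq 4$ property from a handful of base cases to all integer values of $t$. The quotient link $L_{r/s}$ has a standard diagram exhibiting an obvious $t$--twist region (inherited from the length--four pretzel input and visible in a picture analogous to Figure \ref{figure:P(3,3,2n,-1)Lf}, with the $1/3$--tangle replaced by a $1/5$--tangle), and this is the twist region where I would resolve crossings.

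First I would establish base cases. For each of the six base fractions $a/b\in\{0,1/2,\pm 1/3,\pm 1/4\}$, pick a small integer $t_0$ for which $L_{a/b+t_0}$ is a knot, draw the quotient link explicitly from the tangle description, and compute $\text{Kh}(L_{a/b+t_0})$ in the KnotTheory\verb|`| package, verifying $|\text{Kh}(L_{a/b+t_0})|\geq 4$ in each case. Since only one $t_0$ per $a/b$ is needed, this is a finite computation.

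For the inductive step I would pick a single crossing $c$ inside the $t$--twist region and consider its two smoothings $D_0$ and $D_1$. One of these produces a diagram of $L_{a/b+(t\mp 1)}$, to which the inductive hypothesis applies, while the other unravels the twist region sufficiently to yield one of the simple links listed in the remark before the lemma (unknot, Hopf link, trefoil, or $T(2,4)$), whose Khovanov homology is supported on at most two diagonals. The long exact sequence, with its bigrading shift depending on the number of negative crossings that change, then gives isomorphisms between the bigraded pieces of $\text{Kh}(L_{a/b+t})$ and $\text{Kh}(L_{a/b+(t\mp 1)})$ on every diagonal outside the narrow band occupied by the simple link's cohomology. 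Consequently, the four or more diagonals supporting $\text{Kh}(L_{a/b+(t\mp 1)})$ persist (modulo at most two possibly perturbed diagonals) in $\text{Kh}(L_{a/b+t})$; a cushion argument---choosing $t_0$ so that the four supported diagonals lie comfortably outside the shifted narrow band for every step of the induction---gives width $\geq 4$ for all $t$.

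The main obstacle is the bookkeeping of the grading shifts. Concretely, for each $a/b$ one must identify which simple link appears as $D_0$ or $D_1$, compute the shift $c$ determined by the signs of the crossings in the chosen diagram, and confirm that the number of inductively preserved diagonals remains at least four after the shift. All of this amounts to a finite and direct check once the diagrams are fixed, and previous applications of this technique in \cite{asaeda-przytycki,turner} suggest that the cushion is ample, so that width $\geq 4$ is never destroyed during the induction.
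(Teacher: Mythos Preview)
Your approach is essentially the paper's own: resolve a crossing in the $t$--twist region, identify one resolution as $L_{a/b+(t\mp1)}$ and the other as a simple link (unknot, Hopf link, trefoil, or $T(2,\pm4)$, depending on $a/b$), and use the long exact sequence to propagate nonzero classes on extremal diagonals. Two refinements from the paper are worth incorporating. First, the paper applies the long exact sequence \emph{twice} per step, passing from $L_{a/b+t}$ to $L_{a/b+(t\pm2)}$; this matters because for several values of $a/b$ the parity of $t$ governs whether $L_{a/b+t}$ is a knot or a two--component link, and stepping by $2$ keeps one in the knot regime (the link case having been handled separately by Criterion~\ref{crit:2bridge}). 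Second, rather than an abstract cushion argument, the paper pins down, for each $a/b$ and each sign (and parity, where relevant) of $t$, two \emph{specific} bigraded groups $\Kh^{i_0}_{j_0}(L_{a/b+t_0})$ lying on diagonals at distance $\geq3$ apart, and then checks that the indices $i_0$ fall outside the excluded range of the double--LES isomorphism for every subsequent $t$; this makes the persistence of width $\geq4$ a direct verification rather than an estimate. Your outline would benefit from both adjustments, but the underlying idea is the same.
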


\begin{table}
\begin{tabular}{|c|c|c|c|c|}
\hline
Surgery type & $\Kh$ for fixed $r/s$ & General $\Kh$ & Diagonals ($j-2i$) & Width \\
\hline
\hline
Integral 		  & $\Kh^{0}_{1}(L_{-9})\cong\Q$   & $\Kh^{0}_{-t-8}(L_{t})\cong\Q$   &    $\{-8-t,-2-t\}$   & 4  \\ 
($t<0$)		& $\Kh^{7}_{21}(L_{-9})\cong\Q$   & $\Kh^{7}_{-t+12}(L_{t})\cong\Q$   &                            &      \\ 
\hline
Integral 		& $\Kh^0_{-19}(L_{11})\cong\Q$         & $\Kh^0_{-t-8}(L_{t})\cong\Q$           & $\{-8-t, -t\}$  & 5  \\
($t>0$)		& $\Kh^{-10}_{-31}(L_{11})\cong\Q$   & $\Kh^{-10}_{-t-20}(L_{t})\cong\Q$   &                       &   \\ 
                             \hline
Half-integral 		 & $\Kh^{-4}_{-13}(L_{1/2-5})\cong\Q$  & $\Kh^{t+1}_{2t-3}(L_{1/2+t})\cong\Q$ & $\{-5, 1\}$  & 4  \\
($t$ odd, $t<0$)	& $\Kh^{-11}_{-21}(L_{1/2-5})\cong\Q$   & $\Kh^{t-6}_{2t-11}(L_{1/2+t})\cong\Q$   &                       &   \\ 
                                        \hline
Half-integral	 	& $\Kh^{-7}_{-21}(L_{1/2-6})\cong\Q$   & $\Kh^{t-1}_{2t-9}(L_{1/2+t})\cong\Q$   &  $\{-7,-1\}$ & 4  \\ 
($t$ even, $t<0$)	& $\Kh^{-14}_{-29}(L_{1/2-6})\cong\Q$   & $\Kh^{t-8}_{2t-17}(L_{1/2+t})\cong\Q$   &                       &   \\ 
\hline
Half-integral 		 & $\Kh^{7}_{15}(L_{1/2+5})\cong\Q^2$  & $\Kh^{t+2}_{2t+1}(L_{1/2+t})\cong\Q^2$ & $\{-5, 1\}$  & 4  \\
($t$ odd, $t>0$)	& $\Kh^{14}_{23}(L_{1/2+5})\cong\Q$   & $\Kh^{t+7}_{2t+9}(L_{1/2+t})\cong\Q$   &                       &   \\ 
                                        \hline
Half-integral	 	& $\Kh^{4}_{7}(L_{1/2+6})\cong\Q$   & $\Kh^{t-2}_{2t-5}(L_{1/2+t})\cong\Q$   &  $\{-7,-1\}$ & 4  \\ 
($t$ even, $t>0$)	& $\Kh^{11}_{15}(L_{1/2+6})\cong\Q$   & $\Kh^{t+5}_{2t+3}(L_{1/2+t})\cong\Q$   &                       &   \\ 
\hline
Third-integral    & $\Kh^{-4}_{-9}(L_{1/3-8})\cong\Q$         & $\Kh^{-4}_{-t-17}(L_{1/3+t})\cong\Q$           & $\{-t-9,- t-3\}$  & 4  \\
($t<0$)		& $\Kh^{-11}_{-17}(L_{1/3-8})\cong\Q$   & $\Kh^{-11}_{-t-25}(L_{1/3+t})\cong\Q$   &                       &   \\ 
			 & $\Kh^{1}_{1}(L_{-1/3-6})\cong\Q$         & $\Kh^{1}_{-t-5}(L_{-1/3+t})\cong\Q$           & $\{-t-7,- t-1\}$  & 4  \\
			& $\Kh^{-9}_{-13}(L_{-1/3-6})\cong\Q$   & $\Kh^{-9}_{-t-19}(L_{-1/3+t})\cong\Q$   &                       &   \\ 
                             \hline
Third-integral	& $\Kh^{1}_{-15}(L_{1/3+8})\cong\Q^2$   & $\Kh^{1}_{-t-7}(L_{1/3+t})\cong\Q^2$   & $\{-t-9,-t-1 \}$	& 5   \\ 
($t>0$)		& $\Kh^{-8}_{-25}(L_{1/3+8})\cong\Q$   & $\Kh^{-8}_{-t-17}(L_{1/3+t})\cong\Q$   &                       &   \\ 
			 & $\Kh^{1}_{-13}(L_{-1/3-8})\cong\Q^2$   & $\Kh^{1}_{-t-7}(L_{-1/3+t})\cong\Q^2$           & $\{-t-9,- t+1\}$  & 5  \\
			& $\Kh^{-6}_{-19}(L_{-1/3-8})\cong\Q$   & $\Kh^{-6}_{-t-11}(L_{-1/3+t})\cong\Q$   &                       &   \\ 
\hline
Fourth-Integral  & $\Kh^{-8}_{-19}(L_{1/4-9})\cong\Q$         & $\Kh^{t+1}_{2t-1}(L_{1/4+t})\cong\Q$           & $\{-3, 3\}$  & 4  \\
($t$ odd, $t<0$) & $\Kh^{-9}_{-15}(L_{1/4-9})\cong\Q^7$   & $\Kh^{t}_{2t+3}(L_{1/4+t})\cong\Q^7$   &                       &   \\ 
                            
                             & $\Kh^{-9}_{-27}(L_{-1/4-9})\cong\Q^2$   & $\Kh^{t}_{2t-9}(L_{-1/4-t})\cong\Q^2$   & $\{-9,-3\}$	& 4  \\ 
                             & $\Kh^{-10}_{-23}(L_{-1/4-9})\cong\Q^2$   & $\Kh^{t-1}_{2t-5}(L_{-1/4-t})\cong\Q^2$   &                       &   \\ 
\hline
Fourth-Integral  & $\Kh^{-11}_{-29}(L_{1/4-8})\cong\Q$         & $\Kh^{t-3}_{2t-13}(L_{1/4+t})\cong\Q$           & $\{-7, -1\}$  & 4  \\
($t$ even, $t<0$) & $\Kh^{-12}_{-25}(L_{1/4-8})\cong\Q^7$   & $\Kh^{t-4}_{2t-9}(L_{1/4+t})\cong\Q^7$   &                       &   \\ 
                            
                             & $\Kh^{-7}_{-19}(L_{-1/4-8})\cong\Q$   & $\Kh^{t+1}_{2t-3}(L_{-1/4-t})\cong\Q$   &$\{-5, 1\}$  & 4  \\ 
                             & $\Kh^{-8}_{-15}(L_{-1/4-8})\cong\Q^2$   & $\Kh^{t}_{2t+1}(L_{-1/4-t})\cong\Q^2$   &                       &   \\ 
\hline
Fourth-Integral  & $\Kh^{8}_{19}(L_{1/4+7})\cong\Q^2$         & $\Kh^{t}_{2t+5}(L_{1/4+t})\cong\Q^2$           & $\{-1, 5\}$  & 4  \\
($t$ odd, $t>0$) & $\Kh^{9}_{15}(L_{1/4+7})\cong\Q$   & $\Kh^{t+1}_{2t+1}(L_{1/4+t})\cong\Q$   &                       &   \\ 
                            
                             & $\Kh^{6}_{9}(L_{-1/4+9})\cong\Q^2$   & $\Kh^{t-3}_{2t-9}(L_{-1/4+t})\cong\Q^2$   &$\{-3, -9\}$  & 4  \\ 
                             & $\Kh^{9}_{9}(L_{-1/4+9})\cong\Q^2$   & $\Kh^{t}_{2t-9}(L_{-1/4+t})\cong\Q^2$   &                       &   \\ 
\hline
Fourth-Integral  & $\Kh^{6}_{11}(L_{1/4+8})\cong\Q^2$         & $\Kh^{t-2}_{2t-5}(L_{1/4+t})\cong\Q^2$           & $\{-7, -1\}$  & 4  \\
($t$ even, $t>0$) & $\Kh^{7}_{7}(L_{1/4+8})\cong\Q^2$   & $\Kh^{t-1}_{2t-9}(L_{1/4+t})\cong\Q^2$   &                       &   \\ 
                            
                             & $\Kh^{7}_{15}(L_{-1/4+8})\cong\Q$   & $\Kh^{t-1}_{2t-1}(L_{-1/4+t})\cong\Q$   &$\{-5, 1\}$  & 4  \\ 
                             & $\Kh^{8}_{11}(L_{-1/4+8})\cong\Q$   & $\Kh^{t}_{2t-5}(L_{-1/4+t})\cong\Q$   &                       &   \\ 
\hline
\end{tabular}
\label{tableKh}
\caption{For each type of $a/b$ (first column), $\Kh(L_{a/b+t_0})$ can be seen to have width at least 4 (columns 2, 4, and 5).  Furthermore, due to the isomorphisms exhibited in Lemma \ref{lemma:khisom}, these graded components persist (up to consistent grading shifts) for all $|t|>|t_0|$ (column 3), which proves that $\Kh(L_{a/b+t})$ always has width at least 4 (columns 3,4, and 5).}
\end{table}

\begin{proof}
This proof will be split into cases based on the value of $a/b$.  Values of $t$ will be chosen so that $L_{a/b+t}$ is a knot, since the case of a link has already been covered above.  Throughout, keep in mind that the diagram $D$ of $L_{r/s}$ is a slight variation on the left side of Figure \ref{figure:P(3,3,2n,-1)Lf}, as mentioned before.

First, assume that $a/b=0$.  If $t>0$, then choose one of the negative crossings in the $t$-twist area of a diagram $D$ for $L_{t}$.  If $t<0$, form $D$ by creating a pair of opposite crossings next to the $t$-twist area, so that it contains a negative crossing and $t+1$ positive crossings.  In either case, the 0-resolution of the negative crossing, $D_0$, is the unknot and the 1-resolution, $D_1$, is $L_{t-1}$.  In either case, $c=-t-2$.  Repeat the process once again, using $D_1$ as $D'$.  Once again, $D_0'$ is the unknot, but now $D_1'=L_{t-2}$ and $c'=-t-1$.  Combining all of this, we have
$$\Kh^i_{j}(L_t)\cong\Kh^i_{j-2}(L_{t+2}) \hspace{.25in} \text{ if } i\not=-t-3,-t-2,-t-1.$$
Now, if we refer to Table \ref{tableKh}, the second column provides examples of graded components of $\Kh(L_{11})$ and $\Kh(L_{-9})$ that demonstrate that these knots have wide Khovanov cohomology.  But as $|t|$ increases, these graded components are preserved isomorphically (with a grading shift) in $\Kh(L_{t})$.  It follows that for large values of $|t|$, $\Kh(L_t)$ is also wide.  For small values of $|t|$ (say, $|t|\leq 11$), it is easily verified by computer that $\Kh(L_t)$ is wide.

When $a/b=1/2$, an identical argument (producing $c$-values of 1 and $-3$) gives us that 
$$\Kh^i_{j}(L_{1/2+t+2})\cong\Kh^{i-2}_{j-4}(L_{1/2+t}) \hspace{.25in} \text{ if } i\not\in[-3,3].$$
One difference here is that $D_1$ and $D_1'$ are the Hopf link, instead of the unknot, so $\Kh$ vanishes outside $i\in[-2,2]$.  With this in mind, the values in Table \ref{tableKh} give the desired width estimates.

When $a/b=\pm1/3$, the analysis is identical to that of the case when $a/b=0$, except that $D_0$ and $D_0'$ are a trefoil, both of whose $\Kh$ is supported in $i\in[-3,3]$, so we have 
$$\Kh^i_{j}(L_{1/3+t})\cong\Kh^i_{j-2}(L_{1/3+t+2}) \hspace{.25in} \text{ if } i\not\in[-t-6,-t+2],$$
and
$$\Kh^i_{j}(L_{-1/3+t})\cong\Kh^i_{j-2}(L_{-1/3+t+2}) \hspace{.25in} \text{ if } i\not\in[-t-3,-t+5].$$

When $a/b=\pm1/4$, the analysis is similar to that of the case when $a/b=\pm1/3$, except that $D_0$ and $D_0'$ are both either the $T(2,4)$ or $T(2,-4)$ torus link, both of whose $\Kh$ is supported in $i\in[-4,4]$, so we have 
$$\Kh^i_{j}(L_{\pm1/4+t})\cong\Kh^{i+2}_{j+4}(L_{\pm1/4+t+2}) \hspace{.25in} \text{ if } i\not\in[-5,5].$$

All these cases are concluded by regarding the information in Table \ref{tableKh}, as has been done above.

\end{proof}

\section{Non-pretzel Montesinos knots}\label{section:non-pretzel}

In this section, we discuss small Seifert fibered surgery on non-pretzel Montesinos knots.  By Wu  \cite{wu_plbs,wu_imm}, we need only consider a handful of cases:  $K[1/3,-2/3,2/5]$, $K[-1/2,1/3,2/(2a+1)]$ for $a\in\{3,4,5,6,\}$, and $K[-1/2,2/5,1/(2q+1)]$ for $q\geq 1$.  We prove the following theorem.

\begin{reptheorem}{thm:main4}
Suppose that $K$ is a non-pretzel Montesinos knot and $K(\alpha)$ is a small Seifert fibered space.  Then either $K=K[-1/2,2/5, 1/(2q+1)]$ for $q\geq 5$, or $K$ is on the following list and has the described surgeries.
\begin{itemize}
\item $K[1/3, -2/3,2/5](-5) = S^2(1/4,2/5,-3/5)$
\item $K[-1/2, 1/3, 2/7](-1) = S^2(1/3, 1/4, -4/7)$
\item $K[-1/2, 1/3, 2/7](0) = S^2(1/2, 3/10,-4/5)$
\item $K[-1/2, 1/3, 2/7](1) = S^2(1/2, 1/3, -16/19)$
\item $K[-1/2, 1/3, 2/9](2) = S^2(1/2, -1/3, -3/20)$
\item $K[-1/2, 1/3, 2/9](3) = S^2(1/2, -1/5, -3/11)$
\item $K[-1/2, 1/3, 2/9](4) = S^2(-1/4, 2/3, -3/8)$
\item $K[-1/2, 1/3, 2/11](-2) = S^2(-2/3, 2/5, 2/7)$
\item $K[-1/2, 1/3, 2/11](-1) = S^2(-1/2, -2/7, 2/9)$
\item $K[-1/2, 1/3, 2/5](3) = S^2(1/2, -1/3, -2/15)$
\item $K[-1/2, 1/3, 2/5](4) = S^2(1/2, -1/6, -2/7)$
\item $K[-1/2, 1/3, 2/5](5) = S^2(-1/3, -1/5, 3/5)$
\item $K[-1/2, 1/5, 2/5](7) = S^2(1/2, -1/5, -2/9)$
\item $K[-1/2, 1/5, 2/5](8) = S^2(-1/4, 3/4, -2/5)$
\item $K[-1/2, 1/7, 2/5](11) = S^2(-1/3, 3/4, -2/7)$
\end{itemize}
\end{reptheorem}

\begin{repquestion}{question1}
Do the Montesinos knots $K[-1/2, 2/5, 1/(2q+1)]$ with $q\geq 5$ admit small Seifert fibered surgeries?
\end{repquestion}

With the exception of the case noted in the question above, we now prove the list give in Theorem \ref{thm:main4} is complete.  In proving the theorem, we will consider the three types of Montesinos knots involved separately.  Note that throughout, we assume $r\in\Z$.

\subsection{The case of $K[1/3,-2/3,2/5]$}

First, consider the case when $K=K[1/3, -2/3, 2/5]$.  By Wu \cite{wu_toroidal}, $K(-4)$ and $K(-6)$ are toroidal, so it suffices to consider $K(r)$ for $-12\leq r\leq 2$, by Theorem \ref{thm:lack-meyer}.  Define $L_r$, as we have done before (see Figure \ref{figure:NonPretzel}, left side).  To show that $L_r$ is not a Montesinos knot or link, we apply Method 1.  In the case of even $r$, we note that $L_r = U\cup T(2,3)$, so if $L_r$ is a length three Montesinos link with two components, then one tangle is either a (1/3)--tangle or a (2/3)--tangle.  When such a check is performed, precisely one match is found: $K[1/3,-2/3,2/5](-5)$ is a Seifert fibered space, as shown in Theorem \ref{thm:main4}.

To see that $L_r$ is never a Seifert link, we simply note that for odd $r$, $L_r$ cannot be a torus knot, by Criterion \ref{crit:positive}, since $|s(L_r)|<\text{br}(\Delta_{L_r}(q))$.  If $r$ is even, we note that $L_r = U\cup T(2,3)$.  \emph{A priori}, $L_r$ may be a trefoil union one of its core curves; however, this link has crossing number 7, and the crossing number of $L_r$ is at least $\text{br}(V_{L_r})=10$.  Thus, $L_r$ is never a two component Seifert link.

\subsection{The case of $K[-1/2,1/3,2/(2a+1)]$}

In the case where $K_a=K[-1/2, 1/3, 2/(2a+1)]$ for $a\in\{3,4,5,6\}$, we note that by Wu \cite{wu_toroidal} we have the following toroidal fillings: $K_3(-2)$, $K_4(5)$, $K_5(2)$, $K_5(-3),$ and $K_6(2)$, so we consider surgery slopes $r$ with distance at most 8 from the corresponding toroidal filling.

We proceed as above, considering links and knots $L_{a,r}$ (see Figure \ref{figure:NonPretzel}, right side), to show that $L_{a,r}$ is never a Montesinos knot or link, noting in this case that $L_{a,r}=U\cup J$, where $J$ is $T(2,5)$ if $a=3$, $T(2,3)$ if $a=4$, and the unknot if $a=5$ or 6.  By applying Method 1, we find that the Montesinos links are precisely those stated in Theorem \ref{thm:main4}.

If $r$ is even, and $L_{a,r}$ is a Seifert link, we must have that $L_{3,r}$ is the union of $T(2,5)$, together with a core curve, $L_{4,r}$ is the union of $T(2,3)$, together with a core curve, and $L_{5,r}$ and $L_{6,r}$ have the form $T(2, 2n)$ for some $n$.  However, $L_{5,r}$ and $L_{6,r}$ cannot have this form, since they are not alternating, a fact deduced by noticing that $|\text{Kh}(L_{a,r})|\geq 3$.  As above, $L_{4,r}$ cannot have the said form because it must have at least 9 crossings.  Finally, we see that $L_{3,r}$ has linking number $\pm1$, while $T(2,5)$ has linking number $\pm5$ or $\pm2$ with its cores.  Thus, $L_{a,r}$ cannot be a two component Seifert link.

\begin{figure}
\centering
\includegraphics[scale = .75]{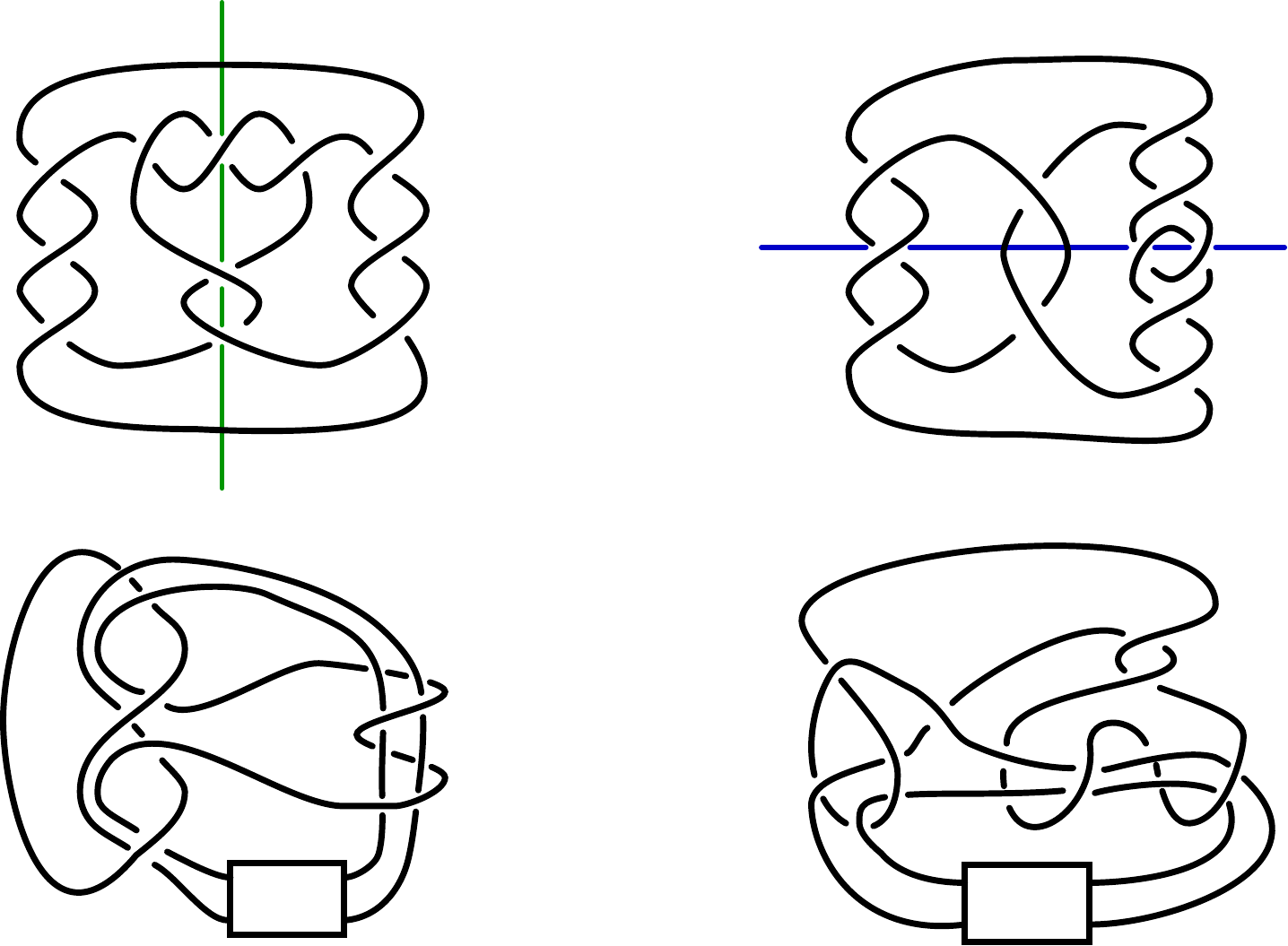}
\put(-252,6.5){-8-$r$}
\put(-71,6){6-$r$}
\caption{On the left, we have the Montesinos knot $K[1/3,1/3-3/5]$, along with the axis of one of its strong inversions, and the quotient link $L_r$.  On the right, we have the Montesinos knot $K[-1/2,1/3,2/9]$, along with the axis of its strong inversion, and the quotient link $L_{a,r}$ with $a=4$.  In the case of $K[-1/2,2/5,1/(2a+1)]$, we have a picture similar to that on the right.}
\label{figure:NonPretzel}
\end{figure}

If $r$ is odd, we can obstruct most of the $L_{a,r}$ from being torus knots by using Criterion \ref{crit:positive}.  However, $L_{3,1}$ and $L_{4,1}$ have equal Rasmussen invariant and breadth of Alexander polynomial.  The former, in fact, corresponds to a Seifert fibered space, so consider $L_{4,1}$.  Since this knot has determinant 1, if it is to be a torus knot of the form $T(a,b)$, both $a$ and $b$ are odd.   Furthermore, since it has Rasmussen invariant equal to 10, we have $10 = (a-1)(b-1)$, which is impossible if $a$ and $b$ are both odd.  Thus, $L_{a,r}$ is never a torus knot.

\subsection{The case of $K[-1/2,2/5,1/(2q+1)]$}

Finally, we are left with the case when $K_q=K[-1/2, 2/5, 1/(2q+1)]$ and $q\geq 1$.  By Wu \cite{wu_toroidal}, we have the following toroidal fillings:  $K_1(6), K_2(9), K_3(12), K_4(15)$ so for $q\leq 4$, we can bound the surgery slope $r$, and complete the classification.  For larger $q$, we cannot.  Furthermore, we obtain no bound on $q$, as we have done above.  Below, we argue that when $q\leq4$, our classification is complete.

If we again consider $L_{q,r}$, we see that for $q\leq 4$ and odd $r$, many $L_{q,r}$ have $s(L_{q,r})<\text{br}(\Delta_{L_{q,r}}(t))$ or $\det(L_{q,r})>s(L_{q,r})+1$, so they cannot be torus knots, by Criteria \ref{crit:positive} and \ref{crit:det}, respectively.  However, $\det(L_{1,r})=s(L_{1,r})+1$ for $r\in\{5,7,9,11,13\}$.  So, it is possible that $L_{1,r} = T(2,r)$.  However, $|\text{Kh}(L_{1,r})|=3$ for such $r$, so they cannot be alternating knots.

When $r$ is even, we note that $L_{q,r}=U\cup J_q$, where $J_q$ is the 2-bridge knot $K[2/(2q-9)]$.  This knot is only a torus knot (or the unknot) if $q=3,4,5,$ or 6, so $L_{q,r}$ can only be a Seifert link for these values, by Criterion \ref{crit:torus}.  However, $L_{4,r}$ is never alternating, so it cannot be $T(2,2n)$, and,  $L_{3,r}$ has at least 9 crossings, so it cannot be the union of a trefoil and a core curve.

We show $L_{q,r}$ cannot be a Montesinos knot or link by applying Method 1.  When $r$ is even, we note that one tangle would be a $(2/(2q+5))$--tangle or a $((q+3)/(2q+5))$--tangle.

%
%

\bibliographystyle{amsalpha}
\bibliography{SFSBibv2.bib}

\end{document}